\begin{document}
\newcommand{\M}{{\mathcal M}}
\newcommand{\loc}{{\mathrm{loc}}}
\newcommand{\core}{C_0^{\infty}(\Omega)}
\newcommand{\sob}{W^{1,p}(\Omega)}
\newcommand{\sobloc}{W^{1,p}_{\mathrm{loc}}(\Omega)}
\newcommand{\merhav}{{\mathcal D}^{1,p}}
\newcommand{\be}{\begin{equation}}
\newcommand{\ee}{\end{equation}}
\newcommand{\mysection}[1]{\section{#1}\setcounter{equation}{0}}
\newcommand{\laplace}{\Delta}
\newcommand{\pl}{\laplace_p}
\newcommand{\grad}{\nabla}
\newcommand{\pd}{\partial}
\newcommand{\bo}{\pd}
\newcommand{\csub}{\subset \subset}
\newcommand{\sm}{\setminus}
\newcommand{\ssm}{:}
\newcommand{\diver}{\mathrm{div}\,}
\newcommand{\bea}{\begin{eqnarray}}
\newcommand{\eea}{\end{eqnarray}}
\newcommand{\bean}{\begin{eqnarray*}}
\newcommand{\eean}{\end{eqnarray*}}
\newcommand{\thkl}{\rule[-.5mm]{.3mm}{3mm}}
\newcommand{\cw}{\stackrel{\rightharpoonup}{\rightharpoonup}}
\newcommand{\id}{\operatorname{id}}
\newcommand{\supp}{\operatorname{supp}}
\newcommand{\wlim}{\mbox{ w-lim }}
\newcommand{\mymu}{{x_N^{-p_*}}}
\newcommand{\R}{{\mathbb R}}
\newcommand{\N}{{\mathbb N}}
\newcommand{\Z}{{\mathbb Z}}
\newcommand{\Q}{{\mathbb Q}}
\newcommand{\abs}[1]{\lvert#1\rvert}
\newtheorem{theorem}{Theorem}[section]
\newtheorem{corollary}[theorem]{Corollary}
\newtheorem{lemma}[theorem]{Lemma}
\newtheorem{notation}[theorem]{Notation}
\newtheorem{definition}[theorem]{Definition}
\newtheorem{remark}[theorem]{Remark}
\newtheorem{proposition}[theorem]{Proposition}
\newtheorem{assertion}[theorem]{Assertion}
\newtheorem{problem}[theorem]{Problem}
\newtheorem{conjecture}[theorem]{Conjecture}
\newtheorem{question}[theorem]{Question}
\newtheorem{example}[theorem]{Example}
\newtheorem{Thm}[theorem]{Theorem}
\newtheorem{Lem}[theorem]{Lemma}
\newtheorem{Pro}[theorem]{Proposition}
\newtheorem{Def}[theorem]{Definition}
\newtheorem{Exa}[theorem]{Example}
\newtheorem{Exs}[theorem]{Examples}
\newtheorem{Rems}[theorem]{Remarks}
\newtheorem{Rem}[theorem]{Remark}
\newtheorem{Claim}[theorem]{Claim}

\newtheorem{Cor}[theorem]{Corollary}
\newtheorem{Conj}[theorem]{Conjecture}
\newtheorem{Prob}[theorem]{Problem}
\newtheorem{Ques}[theorem]{Question}
\newtheorem*{corollary*}{Corollary}
\newtheorem*{theorem*}{Theorem}
\newcommand{\pf}{\noindent \mbox{{\bf Proof}: }}


\renewcommand{\theequation}{\thesection.\arabic{equation}}
\catcode`@=11 \@addtoreset{equation}{section} \catcode`@=12
\newcommand{\Real}{\mathbb{R}}
\newcommand{\real}{\mathbb{R}}
\newcommand{\Nat}{\mathbb{N}}
\newcommand{\ZZ}{\mathbb{Z}}
\newcommand{\CC}{\mathbb{C}}
\newcommand{\Pess}{\opname{Pess}}
\newcommand{\Proof}{\mbox{\noindent {\bf Proof} \hspace{2mm}}}
\newcommand{\mbinom}[2]{\left (\!\!{\renewcommand{\arraystretch}{0.5}
\mbox{$\begin{array}[c]{c}  #1\\ #2  \end{array}$}}\!\! \right )}
\newcommand{\brang}[1]{\langle #1 \rangle}
\newcommand{\vstrut}[1]{\rule{0mm}{#1mm}}
\newcommand{\rec}[1]{\frac{1}{#1}}
\newcommand{\set}[1]{\{#1\}}
\newcommand{\dist}[2]{$\mbox{\rm dist}\,(#1,#2)$}
\newcommand{\opname}[1]{\mbox{\rm #1}\,}
\newcommand{\mb}[1]{\;\mbox{ #1 }\;}
\newcommand{\undersym}[2]
 {{\renewcommand{\arraystretch}{0.5}  \mbox{$\begin{array}[t]{c}
 #1\\ #2  \end{array}$}}}
\newlength{\wex}  \newlength{\hex}
\newcommand{\understack}[3]{%
 \settowidth{\wex}{\mbox{$#3$}} \settoheight{\hex}{\mbox{$#1$}}
 \hspace{\wex}  \raisebox{-1.2\hex}{\makebox[-\wex][c]{$#2$}}
 \makebox[\wex][c]{$#1$}   }%
\newcommand{\smit}[1]{\mbox{\small \it #1}}
\newcommand{\lgit}[1]{\mbox{\large \it #1}}
\newcommand{\scts}[1]{\scriptstyle #1}
\newcommand{\scss}[1]{\scriptscriptstyle #1}
\newcommand{\txts}[1]{\textstyle #1}
\newcommand{\dsps}[1]{\displaystyle #1}
\newcommand{\dx}{\,\mathrm{d}x}
\newcommand{\dy}{\,\mathrm{d}y}
\newcommand{\dz}{\,\mathrm{d}z}
\newcommand{\dt}{\,\mathrm{d}t}
\newcommand{\dr}{\,\mathrm{d}r}
\newcommand{\du}{\,\mathrm{d}u}
\newcommand{\dv}{\,\mathrm{d}v}
\newcommand{\dV}{\,\mathrm{d}V}
\newcommand{\ds}{\,\mathrm{d}s}
\newcommand{\dS}{\,\mathrm{d}S}
\newcommand{\dk}{\,\mathrm{d}k}

\newcommand{\dphi}{\,\mathrm{d}\phi}
\newcommand{\dtau}{\,\mathrm{d}\tau}
\newcommand{\dxi}{\,\mathrm{d}\xi}
\newcommand{\deta}{\,\mathrm{d}\eta}
\newcommand{\dsigma}{\,\mathrm{d}\sigma}
\newcommand{\dtheta}{\,\mathrm{d}\theta}
\newcommand{\dnu}{\,\mathrm{d}\nu}

\def\ga{\alpha}     \def\gb{\beta}       \def\gg{\gamma}
\def\gc{\chi}       \def\gd{\delta}      \def\ge{\epsilon}
\def\gth{\theta}                         \def\vge{\varepsilon}
\def\gf{\phi}       \def\vgf{\varphi}    \def\gh{\eta}
\def\gi{\iota}      \def\gk{\kappa}      \def\gl{\lambda}
\def\gm{\mu}        \def\gn{\nu}         \def\gp{\pi}
\def\vgp{\varpi}    \def\gr{\rho}        \def\vgr{\varrho}
\def\gs{\sigma}     \def\vgs{\varsigma}  \def\gt{\tau}
\def\gu{\upsilon}   \def\gv{\vartheta}   \def\gw{\omega}
\def\gx{\xi}        \def\gy{\psi}        \def\gz{\zeta}
\def\Gg{\Gamma}     \def\Gd{\Delta}      \def\Gf{\Phi}
\def\Gth{\Theta}
\def\Gl{\Lambda}    \def\Gs{\Sigma}      \def\Gp{\Pi}
\def\Gw{\Omega}     \def\Gx{\Xi}         \def\Gy{\Psi}

\renewcommand{\div}{\mathrm{div}}
\newcommand{\red}[1]{{\color{red} #1}}


\title{A spectral result for Hardy inequalities}

\author{Baptiste Devyver}
\address{Baptiste Devyver, Department of Mathematics,  Technion - Israel Institute of Technology, Haifa 32000, Israel}
\email{baptiste.devyver@univ-nantes.fr \\
devyver@tx.technion.ac.il}

\begin{abstract}

Let $P$ be a linear, elliptic second order symmetric operator, with an associated quadratic form $q$, and let $W$ be a potential such that the Hardy inequality 

$$\lambda_0\int_\Omega Wu^2\leq q(u)$$
holds with (non-negative) best constant $\lambda_0$. We give sufficient conditions so that the spectrum of the operator $\frac{1}{W}P$ is $[\lambda_0,\infty)$. In particular, we apply this to several well-known Hardy inequalities: (improved) Hardy inequalities on a bounded convex domain of $\R^n$ with potentials involving the distance to the boundary, and Hardy inequalities for minimal submanifolds of $\R^n$.

\end{abstract}

\maketitle

\section{Introduction}

Let $P$ be a linear elliptic, second order, symmetric, non-negative operator on a domain $\Omega$, and let $q$ be the quadratic form associated to $P$. Following Carron \cite{Carron} and Tertikas \cite{Tertikas}, we will call \textit{Hardy inequality} for $P$ with weight $W\geq 0$ and constant $\lambda>0$, the following inequality:

\begin{equation}\label{hardy}
\lambda \int_\Omega Wu^2\leq q(u),\,\,\forall u\in C_0^\infty(\Omega).
\end{equation}
We denote by $\lambda_0=\lambda_0(\Omega,P,W)$ the best constant $\lambda$ for which inequality \eqref{hardy} is valid. By convention, if \eqref{hardy} does not hold for any $\lambda>0$, we will let $\lambda_0=0$. The inequality \eqref{hardy} aims to quantify the positivity of $P$: for instance, inequality \eqref{hardy} with $W\equiv1$ is equivalent to the positivity of the bottom of the spectrum of (the Friedrichs extension of) $P$. 

Let us give a celebrated example of Hardy inequality for $P=-\Delta$, which will be a guideline for us in this paper (see \cite{MMP} for the convex case, and \cite{BFT2}, \cite{Psaradakis} for the mean convex case):

\begin{Exa}\label{ex_convex}

{\em If $\Omega$ is a $C^2$, bounded, mean convex domain of $\R^n$, and $\delta$ is the distance to the boundary of $\Omega$, then

\begin{equation}\label{convex}
\frac{1}{4}\int_\Omega\frac{u^2}{\delta^2}\leq \int_\Omega |\nabla u|^2,\,\,\forall u\in C_0^\infty(\Omega).
\end{equation}
}
\end{Exa}
We recall that $\Omega$ is called \textit{mean convex} if the mean curvature of its boundary is non-negative.\\

Let us return to the general case. 

\subsection{The best constant and the existence of minimizers}
A natural question is, for a given weight $W\geq0$ such that the Hardy inequality \eqref{hardy} holds, to compute the best constant $\lambda_0$ and to discuss whether $\lambda_0$ is attained by a minimizer in the appropriate space or not. More precisely, define $\mathcal{D}^{1,2}$ to be the completion of $C_0^\infty(\Omega)$ with respect to the norm $\sqrt{q}$. The variationnal problem associated to the Hardy inequality \eqref{hardy} is 

\begin{equation}\label{variational}
\lambda_0=\inf_{u\in \mathcal{D}^{1,2}\setminus\{0\}}\frac{q(u)}{\int_\Omega Wu^2}.
\end{equation}
If \eqref{variational} is not realized by a function in $\mathcal{D}^{1,2}$, we will say that the Hardy inequality \eqref{hardy} with best constant $\lambda_0$ does not have a minimizer.

Another interesting quantity, related to the existence of minimizers, is the \textit{best constant at infinity}. It is defined as follows (see \cite{Ag2}, \cite{MMP} and \cite{DFP}):

\begin{Def}
{\em 
The {\em best constant at infinity} $\lambda_\infty=\lambda_\infty(\Omega,P,W)$ is the supremum of the set of $\alpha\geq0$ such that

$$\alpha\int_{\Omega\setminus K_\alpha} Wu^2\leq q(u),\,\,\forall u\in C_0^\infty(\Omega\setminus K_\alpha),$$
for some $K_\alpha\subset\subset\Omega$ compact subset of $\Omega$.
}
\end{Def}
Closely related notions have been introduced (under different names) in \cite{Tertikas} and \cite{FT}.

\begin{Exa}[Example \ref{ex_convex}, continued]
{\em

For the Hardy inequality \eqref{convex} with $W=\frac{1}{\delta^2}$, the best constant and the best constant at infinity are both equal to $\frac{1}{4}$. Furthermore, inequality \eqref{convex} does not have a minimizer. 

}
\end{Exa}

\subsection{Improving Hardy inequalities}
A very natural question is the following: for the Hardy inequality \eqref{hardy} with the best constant $\lambda_0$, can we improve it by adding another non-negative potential $V$ to the left-hand side, i.e. is there a positive constant $\mu$ and a non-negative, non-zero potential $V$ such that 

\begin{equation}\label{improved_hardy}
\lambda_0 \int_\Omega Wu^2+\mu\int_\Omega Vu^2\leq q(u),\,\,\forall u\in C_0^\infty(\Omega) 
\end{equation}
Of course, such an improvement -- if it exists -- is not at all unique, and finding a potential $V$ which is ``as large as possible'' is important. \\

Results by Filippas-Tertikas \cite{FT}, Agmon \cite{Ag2}, Pinchover \cite{P}, \cite{P3}, Marcus-Mizel-Pinchover \cite{MMP}, Pinchover-Tintarev \cite{PT} among others show that the best constant at infinity, as well as the existence of minimizers, play an important role in this problem: indeed, a general result obtained by Agmon \cite{Ag2} (see also Pinchover \cite{P3}, Lemma 4.6 for an easier and more general proof) shows that if the best constant at infinity is strictly greater than the best constant in the inequality \eqref{hardy}, then no improvement by a non-negative, non-zero potential $V$ is possible. 

Also, concerning the minimizers, Pinchover-Tintarev show (Lemma 1.1 in \cite{PT}) that if $W>0$ and if there is a minimizer for the Hardy inequality \eqref{hardy},
then no improvement by a non-negative, non-zero potential $V$ is possible. In fact, if a minimizer exists then it is a \textit{ground state} (in the sense of Agmon) of $P-W$. \\

The possibility of adding a potential $V\gneqq0$ in the left-hand side of the Hardy inequality \eqref{hardy} with best constant $\lambda_0$ has to do with the \textit{criticality} of the operator $P-\lambda_0W$: there exists $V\gneqq0$ such that the improved inequality \eqref{improved_hardy} is valid (with $\mu=1$) if and only if $P-\lambda_0W$ is \textit{subcritical} in $\Omega$. See for example \cite{DFP} for details and references on this.\\

In the case that the best constant at infinity is strictly larger than the best constant, or that there is a minimizer, the potential $W$ ``does not grow fast enough at infinity'' in a certain sense. Let us illustrate this by the following example, related to Example \ref{ex_convex}:

\begin{Exa}

{\em 

Let $P=-\Delta$ on a smooth, bounded, mean convex domain $\Omega$, and let us define, for $\alpha\in \R$,

$$W_\alpha=\delta^{-\alpha},$$
where we recall that $\delta$ is the distance to the boundary of $\Omega$. Then, 

$$\lambda_\infty(\Omega,-\Delta,W_\alpha)=\left\{\begin{array}{lcl}
+\infty,\,\alpha\in (-\infty,2)&&\\
\frac{1}{4},\,\alpha=2&&\\
0,\,\alpha \in (2,+\infty)&&
\end{array}\right.$$

}

\end{Exa}
More generally, if $W$ is a \textit{small perturbation} of $P$ (see \cite{P2}), then $\lambda_\infty(\Omega,P,W)=\infty$.

\begin{Def}

{\em

A non-negative potential $W$, satisfying the Hardy inequality \eqref{hardy}, is said to belong to the class of {\em admissible} potentials $\mathcal{A}(\Omega,P)$ if $$\lambda_0(\Omega,P,W)=\lambda_\infty(\Omega,P,W),$$ 
and if there is no minimizer of the associated variational problem \eqref{variational}. When the dependence with respect to $\Omega$ and $P$ will be clear, we will write $\mathcal{A}$ instead of $\mathcal{A}(\Omega,P)$.

}

\end{Def}
For instance, it follows from the results of Marcus-Mizel-Pinchover \cite{MMP} that if $\Omega$ is a $C^2$, bounded, mean convex domain, then the potential $\delta^{-2}$ belongs to the class $\mathcal{A}(\Omega,-\Delta)$. In this article, we will focus on Hardy inequalities \eqref{hardy} with admissible potentials. A Hardy inequality \eqref{hardy} with admissible potential can sometimes be improved, but \textit{not always}. Actually, if $W$ is admissible, deciding whether the Hardy inequality \eqref{hardy} with best constant $\lambda_0$ can be improved or not, is a delicate question. We define a subclass of $\mathcal{A}$:

\begin{Def}

{\em 

A non-negative potential $W$, satisfying the Hardy inequality \eqref{hardy}, is said to belong to the class of {\em optimal} potentials $\mathcal{O}(\Omega,P)$ if $W\in \mathcal{A}(\Omega,P)$ and if the Hardy inequality \eqref{hardy} with best constant $\lambda_0$ cannot be improved, i.e. if there is no $V\gneqq0$, $\mu>0$ such that inequality \eqref{improved_hardy} holds. Equivalently, $W\in \mathcal{O}(\Omega,P)$ if and only if $W\in \mathcal{A}(\Omega,P)$ and $P-\lambda_0W$ is {\em critical} (see \cite{PT2}).  Furthermore, $P-\lambda_0 W$ is critical if and only if it does not have a ground state in the sense of Agmon (see \cite{PT2}); this of course includes an obvious case when such a ground state is a true minimizer).

}

\end{Def}

\begin{Exa}\label{Hardy_cla}
{\em

The potential $\frac{1}{|x|^2}$ is an optimal potential for $P=-\Delta$ on $\R^n$ (or equivalently, on $\R^n\setminus\{0\}$) for $n\geq3$ (see \cite{DFP} for a short proof). Indeed, the operator $-\Delta-\left(\frac{n-2}{2}\right)^2\frac{1}{|x|^2}$ is critical and has ground state $|x|^{\frac{2-n}{2}}$. Recall that the potential $\frac{1}{|x|^2}$ appears in the classical Hardy inequality in $\R^n$, $n\geq3$ with best constant:

$$\left(\frac{n-2}{2}\right)^2\int_{\R^n}\frac{u^2(x)}{|x|^2}\leq \int_{\R^n}|\nabla u|^2,\,\,\forall u\in C_0^\infty(\R^n).$$
}

\end{Exa}

However, the Hardy inequality \eqref{convex} of Example \ref{ex_convex} can be improved. The first improvement of inequality \eqref{convex} was obtained by Brezis and Vazquez \cite{BV}, for $V\equiv\mathbf1$, however $\mathbf1\notin \mathcal{A}(\Omega,-\Delta-\frac{1}{4\delta^2})$ -- in fact, $V\equiv\mathbf1$ is a small perturbation of $-\Delta-\frac{1}{4\delta^2}$, and thus $\lambda_\infty(\Omega,-\Delta-\frac{1}{4\delta^2},\mathbf1)=\infty$. Later, an improvement by a potential in the class $\mathcal{A}(\Omega,-\Delta-\frac{1}{4\delta^2})$ was obtained by Brezis and Marcus \cite{BM}. Let us introduce the normalized logarithm function, defined by

$$X_1(t):=\left(1-\log t\right)^{-1}.$$
A consequence of the work of Brezis and Marcus is the following:

\begin{Exa}\label{ex_improved_convex}

{\em Let $\Omega$ be a smooth, bounded, mean convex domain, then we have the improved Hardy inequality

\begin{equation}\label{improved_convex}
\frac{1}{4}\int_\Omega \frac{u^2}{\delta^2}X^2_1\left(\frac{\delta}{D}\right)\leq \int_\Omega |\nabla u|^2-\frac{1}{4}\int_\Omega\frac{u^2}{\delta^2},\,\,\forall u\in C_0^\infty(\Omega),
\end{equation}
where $D$ is any constant such that

$$D\geq \sup_{x\in \Omega} \delta(x).$$
Furthermore, $\frac{1}{4}$ is the best constant and the best constant at infinity, and there is no minimizer. In particular, $\delta^{-2}X^2_1 \left(\frac{\delta}{D}\right)$ belongs to the class $\mathcal{A}(\Omega,-\Delta-\frac{1}{4\delta^2})$.

}

\end{Exa}
More recently, Barbatis, Filippas and Tertikas \cite{BFT} have obtained a series of successive improvements of the Hardy inequality \eqref{convex} with admissible potentials, generalizing the improved Hardy inequality \eqref{improved_convex} obtained by Brezis and Marcus. In order to present their result, let us define for $i\geq1$,

$$X_i(t):=X_1(X_{i-1}(t)),$$
and by convention $X_0\equiv\mathbf1$. Let us also define

$$\mathcal{W}_i:=\frac{1}{4\delta^2}\left(\sum_{k=0}^{i} X_0^2\left(\frac{\delta}{D}\right)\cdots X_k^2\left(\frac{\delta}{D}\right)\right),$$
and

$$\mathcal{J}_i:=\mathcal{W}_i-\mathcal{W}_{i-1}=\frac{1}{4\delta^2}X_0^2\left(\frac{\delta}{D}\right)\cdots X_i^2\left(\frac{\delta}{D}\right).$$
The result of Barbatis, Filippas and Tertikas obtained in \cite{BFT} is presented in the following example:

\begin{Exa}\label{ex_series_convex}

{\em Let $\Omega$ be a $C^2$, bounded, mean convex domain, then we can choose $D> \sup_{x\in \Omega} \delta(x)$ big enough (see Remark \ref{indep of i}) so that for every $i\geq1$, the following improved Hardy inequality holds

\begin{equation}\label{improved_H}
\int_\Omega \mathcal{J}_{i} u^2\leq \int_\Omega |\nabla u|^2-\int_\Omega \mathcal{W}_{i-1} u^2,\,\forall u\in C_0^\infty(\Omega).
\end{equation}
Furthermore, $1$ is the best constant and the best constant at infinity, and there is no minimizer. In particular, $\mathcal{J}_{i}$ belongs to the class $\mathcal{A}(\Omega,-\Delta-\mathcal{W}_{i-1}).$

}

\end{Exa}

\subsection{The spectrum of $\frac{1}{W}P$} In this subsection, we collect some results concerning  the spectrum of operators of the type $\frac{1}{W}P$. For more details on this, see \cite{Ag2}, Section 3 in \cite{MMP}, or Proposition 4.2 in \cite{DFP}.\\

If the potential $W$ in the Hardy inequality \eqref{hardy} is \textit{positive}, then there is a spectral interpretation of the best constant $\lambda_0(\Omega,P,W)$ and of the best constant at infinity $\lambda_\infty(\Omega,P,W)$. Consider (the Friedrichs extension of)  the operator $\frac{1}{W}P$, which is self-adjoint on $L^2(\Omega,W\mathrm{d}x)$. Then $\lambda_0(\Omega,P,W)$, the best constant in \eqref{hardy} is the infimum of the spectrum of $\frac{1}{W}P$, and $\lambda_\infty(\Omega,P,W)$, the best constant at infinity is the infimum of the essential spectrum of $\frac{1}{W}P$ (the result concerning $\lambda_\infty$ comes from the Persson's formula, see \cite{Persson}, \cite{Ag3}). As an immediate consequence, the following result holds:

\begin{Lem}\label{discrete}

{\em

Assume that $\lambda_\infty(\Omega,P,W)=+\infty$. Then the spectrum of $\frac{1}{W}P$ is discrete. 

}

\end{Lem}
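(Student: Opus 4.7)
The plan is to deduce the lemma directly from the spectral characterization of $\lambda_\infty$ recalled immediately before the statement. The author has just noted that, when $W>0$, the Friedrichs extension of $\frac{1}{W}P$ on $L^2(\Omega,W\dx)$ is self-adjoint, with
\[
\lambda_0(\Omega,P,W)=\inf\sigma\Bigl(\tfrac{1}{W}P\Bigr),\qquad \lambda_\infty(\Omega,P,W)=\inf\sigma_{\mathrm{ess}}\Bigl(\tfrac{1}{W}P\Bigr),
\]
the latter equality being Persson's formula. The lemma is then purely an abstract consequence of these identifications and does not require revisiting any PDE input.

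First I would plug the hypothesis into Persson's formula: $\lambda_\infty=+\infty$ together with the identification above forces
\[
\inf\sigma_{\mathrm{ess}}\Bigl(\tfrac{1}{W}P\Bigr)=+\infty,
\]
which, with the convention $\inf\emptyset=+\infty$, is equivalent to $\sigma_{\mathrm{ess}}(\frac{1}{W}P)=\emptyset$.

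Next I would invoke the standard spectral-theoretic fact that, for a self-adjoint operator $A$ on a Hilbert space, $\sigma(A)\setminus\sigma_{\mathrm{ess}}(A)$ consists exactly of the eigenvalues of $A$ of finite multiplicity that are isolated points of $\sigma(A)$ (the discrete spectrum). Applying this to $A=\frac{1}{W}P$ and using the previous step, every point of $\sigma(\frac{1}{W}P)$ lies in the discrete spectrum, i.e. the spectrum of $\frac{1}{W}P$ is discrete, which is exactly the claim.

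There is really no obstacle here: the only nontrivial ingredient is Persson's formula in the form $\lambda_\infty=\inf\sigma_{\mathrm{ess}}(\frac{1}{W}P)$, which has been stated just above with references (\cite{Persson}, \cite{Ag3}), and the general abstract fact about self-adjoint operators with empty essential spectrum. Accordingly, I would present the proof as a one-line deduction, noting explicitly that it relies on (i) the variational/Persson identification of $\lambda_\infty$ with the bottom of the essential spectrum of $\frac{1}{W}P$, and (ii) the equivalence ``$\sigma_{\mathrm{ess}}(A)=\emptyset$'' $\Longleftrightarrow$ ``$A$ has purely discrete spectrum'' for self-adjoint $A$.
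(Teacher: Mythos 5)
Your argument is exactly the one the paper intends: the lemma is presented as ``an immediate consequence'' of the identification $\lambda_\infty(\Omega,P,W)=\inf\sigma_{\mathrm{ess}}\bigl(\tfrac{1}{W}P\bigr)$ via Persson's formula, and you correctly deduce that $\lambda_\infty=+\infty$ forces the essential spectrum to be empty, hence the spectrum is purely discrete. This matches the paper's (implicit) proof both in substance and in brevity.
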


\begin{Exa}\label{discrete_ex}

{\em

Let $\Omega$ be a smooth, bounded domain. Then for every $\alpha\in(2,\infty)$, the spectrum of $\delta^{\alpha}$ is discrete. Indeed, for every $\alpha\in (2,\infty)$, $\lambda_\infty(\Omega,-\Delta,\delta^{-\alpha})=\infty$.

}

\end{Exa}
Also, there is a minimizer of the variational problem \eqref{variational} if and only if $\lambda_0(\Omega,P,W)$ is an eigenvalue of $\frac{1}{W}P$. In \cite{MMP}, p.3246, the authors attributed the following result to Agmon: 

\begin{Claim}[Agmon]\label{Agmon} On a smooth, bounded, mean convex domain, the spectrum of $\delta^2(-\Delta)$ is $[\frac{1}{4},\infty)$. Furthermore, without the mean convexity assumption on the domain, the essential spectrum of $\delta^2(-\Delta)$ is $[\frac{1}{4},\infty)$.

\end{Claim}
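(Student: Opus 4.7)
The plan is to establish the identity $\sigma_{\mathrm{ess}}(\delta^2(-\Delta)) = [\tfrac{1}{4}, \infty)$; once this is done, the first assertion follows immediately, since in the mean convex case Example \ref{ex_convex} gives $\lambda_0(\Omega,-\Delta,\delta^{-2})=\tfrac{1}{4}$, so the Friedrichs extension of $\delta^2(-\Delta)$ on $L^2(\Omega,\delta^{-2}\,dx)$ has spectrum contained in $[\tfrac{1}{4},\infty)$, and this combined with $\sigma \supset \sigma_{\mathrm{ess}} = [\tfrac{1}{4},\infty)$ forces equality. For a general $C^2$ domain the inclusion $\sigma_{\mathrm{ess}} \subset [\tfrac{1}{4},\infty)$ amounts to $\lambda_\infty(\Omega,-\Delta,\delta^{-2}) = \tfrac{1}{4}$, which depends only on the local Hardy inequality after flattening the boundary into a half-space and thus does not need mean convexity. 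The content is therefore the reverse inclusion $[\tfrac{1}{4},\infty) \subset \sigma_{\mathrm{ess}}$, which I would prove by an explicit construction of singular Weyl sequences concentrating at $\partial\Omega$.

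The one-dimensional model is $L_0 u := -t^2 u''$ on $L^2((0,\infty),t^{-2}\,dt)$. Plugging in $u = t^\alpha$ reduces the eigenvalue equation to $\alpha^2 - \alpha + \lambda = 0$, so $L_0$ has formal eigenfunctions $t^{1/2\pm i\beta}$ at energy $\lambda = \tfrac{1}{4}+\beta^2$. For fixed $\lambda = \tfrac{1}{4}+\beta^2 \geq \tfrac{1}{4}$ I would truncate these by a logarithmic cutoff,
\be
w_n(t) := t^{1/2+i\beta}\,\psi\!\left(\frac{\log(t/t_n)}{R_n}\right),
\ee
for a fixed bump $\psi\in C_c^\infty(\R)$ and parameters $R_n\to \infty$, $t_n\to 0$ chosen so that the supports concentrate at $\partial\Omega$ and are pairwise disjoint. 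The change of variables $s=\log(t/t_n)/R_n$ gives $\|w_n\|_{L^2(t^{-2}\,dt)}^2 \sim R_n$ and $\|(L_0-\lambda)w_n\|_{L^2(t^{-2}\,dt)}^2 \lesssim R_n^{-1}$, so the relative error is $O(R_n^{-2})$. To pass from the model to $\Omega$, fix $x_0 \in \partial\Omega$ and introduce Fermi coordinates $(y,t)$ in a tubular neighborhood, in which $\delta = t$ and
\be
-\Delta = -\partial_t^2 + h(y,t)\,\partial_t - \Delta_{g(t)},
\ee
with $h$ and $g(t)$ smooth up to $t=0$; then set $u_n(y,t) := \varphi(y)\,w_n(t)$ for a fixed bump $\varphi \in C_c^\infty(\R^{n-1})$. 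A direct computation gives
\be
(\delta^2(-\Delta) - \lambda)u_n = \varphi\,(L_0 - \lambda)w_n + t^2 h\,\varphi\, w_n' - t^2 w_n\,\Delta_{g(t)}\varphi.
\ee
The three summands on the right contribute, respectively, $O(R_n^{-2})$, $O(b_n^2)$, and $O(b_n^4)$ relative to $\|u_n\|^2$, where $b_n := \sup\{t : w_n(t) \neq 0\}$; choosing $t_n$ so that $b_n \to 0$ kills the latter two errors.

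The main obstacle is the weighted bookkeeping in the last step: the singular weight $\delta^{-2}$ and the exponential scale $R_n$ built into the Weyl functions must be tracked simultaneously, and the drift and tangential errors must be shown to decay in $L^2(\delta^{-2}\,dx)$ \emph{relative} to $\|u_n\|^2 \sim R_n$, not in absolute value. The key point is that each power of $t$ produced by multiplication by $\delta^2$ contributes a factor of $b_n$ in the resulting estimate, while each derivative falling on the cutoff $\psi(\,\cdot\,/R_n)$ contributes a factor of $R_n^{-1}$; sending $R_n \to \infty$ first and then $b_n \to 0$ fast enough yields $\|(\delta^2(-\Delta)-\lambda)u_n\|_{L^2(\delta^{-2})} = o(\|u_n\|_{L^2(\delta^{-2})})$. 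The disjoint-support condition makes $\{u_n/\|u_n\|\}$ orthonormal in $L^2(\Omega,\delta^{-2}\,dx)$ and therefore weakly null, so Weyl's criterion places $\lambda$ in $\sigma_{\mathrm{ess}}(\delta^2(-\Delta))$ and completes the proof.
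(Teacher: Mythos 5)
Your proposal is correct, and it reaches the same fundamental mechanism as the paper — approximate generalized eigenfunctions of the form $\delta^{1/2\pm i\beta}$ times a logarithmically-scaled cutoff — but by a more concrete and self-contained route. The paper derives Agmon's claim in two ways: first, as Corollary 2.3, by invoking the DFP optimal weight $W=\tfrac14|\nabla G/G|^2$, the asymptotics $W\sim\tfrac{1}{4\delta^2}$ at $\partial\Omega$, and Theorem~\ref{ess_spectrum_equiv_pot} (stability of $\sigma_{\rm ess}$ under asymptotically equivalent weights); second, as the $i=0$ case of Theorem~\ref{improved_hardy}, which runs the supersolution $h$-transform to reduce to Proposition~\ref{spectrum} and builds ``radial'' Weyl vectors $\psi_n(v)e^{i\mu v}$ with $v=X_1^{-1}(\delta/D)\sim -\log\delta$, relying on subexponential (in fact linear) growth of the pushed-forward measure. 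You instead work directly in Fermi coordinates near a single boundary point, build the Weyl vectors $\varphi(y)\,\delta^{1/2+i\beta}\psi(\log(\delta/t_n)/R_n)$ explicitly, and check by hand that the model error, the drift term, and the tangential term are respectively $O(R_n^{-2})$, $O(b_n^2)$, and $O(b_n^4)$ relative to $\|u_n\|^2\asymp R_n$; sending $R_n\to\infty$ and then the support scale $b_n\to 0$ gives a Weyl sequence for every $\lambda\ge\tfrac14$. Your error estimates are correct (the $t^{1/2}$ factor in $w_n$ plays the role of the $h$-transform by $u_{1/2}=\sqrt{\delta/D}$, and the change of variables $s=\log(t/t_n)/R_n$ reproduces the $d\chi$-bookkeeping in Proposition~\ref{spectrum}). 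Your treatment of the opposite inclusion $\sigma_{\rm ess}\subset[\tfrac14,\infty)$, i.e.\ $\lambda_\infty\ge\tfrac14$ for any $C^2$ domain via boundary flattening and a partition of unity, is stated rather than proved, but it is a standard fact and is also what the paper implicitly uses (via Persson's formula at the end of the proof of Proposition~\ref{spectrum2}). What the paper's abstraction buys — and what your localized Fermi-coordinate argument does not immediately give — is the uniform treatment of all the Barbatis--Filippas--Tertikas iterates $\mathcal{J}_i^{-1}(-\Delta-\mathcal{W}_{i-1})$ in Theorem~\ref{improved_hardy}, as well as the finiteness of the discrete spectrum below the threshold (Lemma~\ref{Morse}), which your argument does not address (but which Claim~\ref{Agmon} does not require). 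What your approach buys is independence from the Green-function asymptotics in \cite{DFP} and a fully elementary, coordinate-explicit construction.
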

To the author's knowledge, Agmon never published this result, but proved a closely related result in \cite{Agmon}. The validity of Agmon's claim \ref{Agmon} implies at once that if $\Omega$ is a smooth, bounded, mean convex domain, the best constant and the best constant at infinity for the Hardy inequality \eqref{ex_convex} are both equal to $\frac{1}{4}$. Agmon's claim has to be compared to Example \ref{discrete_ex}.

\subsection{Optimal Hardy inequalities and the supersolution construction}
In the article \cite{DFP}, starting from a general subcritical operator $P$ on a punctured domain $\Omega^\star=\Omega\setminus\{0\}$, we have constructed an optimal potential $W$ satisfying the Hardy inequality \eqref{hardy} with best constant $1$. This is actually a generalization of Example \ref{Hardy_cla}, since for the case $P=-\Delta$ with $\Omega=\R^n$, $n\geq3$, the constructed potential $W$ is equal to $\left(\frac{n-2}{2}\right)^2\frac{1}{|x|^2}$. The most remarkable property of these constructed potentials $W$ is the criticality of $P-\lambda_0W$: as we have mentionned above, if $W\in \mathcal{A}(\Omega,P)$, the criticality of $P-\lambda_0 W$ is a delicate property. 

These optimal potentials are obtained through a construction that we have called in \cite{DFP} the \textit{supersolution construction}, and that we recall now: 

\begin{Pro}[Supersolution construction, see \cite{DFP}, Lemma 5.1 and Corollary 5.2]

Assume that $P$ is subcritical in $\Omega$, and that there exist $u_0$ and $u_1$ two linearly independent, positive supersolutions of $P$. Then the non-negative potential

$$W(u_0,u_1):=\frac{1}{4}\left|\nabla \log\left(\frac{u_0}{u_1}\right)\right|^2$$
satisfies the Hardy inequality \eqref{hardy} with $\lambda=1$.

\end{Pro}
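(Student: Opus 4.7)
The natural strategy is to apply the Allegretto-Piepenbrink principle: the Hardy inequality $\int_\Omega Wu^2\le q(u)$ for all $u\in C_0^\infty(\Omega)$ is equivalent to the existence of a positive (weak) supersolution $v$ of $(P-W)v\ge 0$. So the task reduces to exhibiting such a $v$ built explicitly from $u_0$ and $u_1$.

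My candidate is the geometric mean $v:=\sqrt{u_0u_1}$, which is positive since $u_0,u_1>0$. Writing $P=-\diver(A\nabla\cdot)+V$ with $A$ symmetric and uniformly elliptic, I would compute $Pv/v$ directly from the identity $\nabla v=v\,\nabla\log v$ and $\log v=\tfrac12(\log u_0+\log u_1)$, starting from
$$\frac{Pv}{v}=V-\nabla\log v\cdot A\nabla\log v-\diver(A\nabla\log v),$$
together with the analogous expressions for $Pu_i/u_i$. The core of the calculation is the polarization identity
$$\tfrac12(\xi\cdot A\xi+\eta\cdot A\eta)-\tfrac14(\xi+\eta)\cdot A(\xi+\eta)=\tfrac14(\xi-\eta)\cdot A(\xi-\eta),$$
applied with $\xi=\nabla\log u_0$ and $\eta=\nabla\log u_1$, which causes the zero-th order potential $V$ and the cross terms $\nabla\log u_0\cdot A\nabla\log u_1$ to cancel, leaving the clean outcome
$$\frac{Pv}{v}=W(u_0,u_1)+\frac12\!\left(\frac{Pu_0}{u_0}+\frac{Pu_1}{u_1}\right).$$

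Since $u_0,u_1$ are positive supersolutions of $P$, each $Pu_i/u_i\ge 0$, and therefore $Pv/v\ge W(u_0,u_1)$. That is, $v$ is a positive supersolution of $P-W$, and the Hardy inequality with $\lambda=1$ follows. If one wishes to avoid invoking Allegretto-Piepenbrink, one may close the argument by hand through the Doob substitution $u=v\phi$ with $\phi\in C_0^\infty(\Omega)$: integration by parts yields
$$q(u)-\int_\Omega Wu^2\dx=\int_\Omega v^2\,A\nabla\phi\cdot\nabla\phi\dx+\int_\Omega u^2\!\left(\frac{Pv}{v}-W\right)\dx\ge 0.$$
The main obstacle is really just careful bookkeeping in the algebraic identity for $Pv/v$; there is no genuine analytic subtlety, since positive weak supersolutions are locally bounded below by Harnack, so $\log(u_0/u_1)$ is well-defined and the relevant identities hold distributionally. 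Linear independence of $u_0,u_1$ is used only to ensure $W(u_0,u_1)\not\equiv 0$, i.e.\ that the resulting Hardy inequality is non-trivial.
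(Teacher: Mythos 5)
Your proposal is correct and is essentially the same argument the paper relies on: your identity $Pv/v = W(u_0,u_1)+\tfrac12\bigl(Pu_0/u_0+Pu_1/u_1\bigr)$ for $v=\sqrt{u_0u_1}$ is exactly the first equality of Lemma~\ref{supconstruct}, which the paper cites from \cite{DFP} rather than re-proving, and the passage to the Hardy inequality via Allegretto--Piepenbrink (or equivalently the ground-state substitution) is the standard closing step used there.
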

In fact, the above norm $|\cdot|$ is a norm associated with $P$, as we shall explain later.
Moreover, the results of \cite{DFP} show that if $u_0$ and $u_1$ are positive \textit{solutions} of $Pu=0,$ in $\Omega\setminus\{0\}$, and if moreover

$$\lim_{x\to\infty}\frac{u_0(x)}{u_1(x)}=0,\,\,\,\,\lim_{x\to0}\frac{u_0(x)}{u_1(x)}=\infty,$$
then $W(u_0,u_1)$ is an optimal potential (having a singularity at $0$). Actually, $u_0$ as above is necessarily the minimal, positive Green function of $P$ with pole $0$. These optimal potentials obtained \textit{via} the supersolution construction have another interesting property: if $W(u_0,u_1)$ is \textit{positive}, then the spectrum of the operator $\frac{1}{W(u_0,u_1)}P$ is $[1,\infty)$. The proof of this last fact relies on the existence of generalized eigenfunctions of exponential type for $\frac{1}{W}P-\lambda$, for every $\lambda\geq1$.\\

However, the optimal potentials $W(u_0,u_1)$ have two drawbacks: first, the existence of two \textit{linearly independent} positive solutions $u_0$, $u_1$ of 

$$Pu_i=0$$
does not always hold: one generally needs to remove one point in $\Omega$ in order to guarantee this condition, and the potential obtained will have a singularity at that point. Secondly, finding the asymptotic of $W(u_0,u_1)$ at infinity, or even a lower bound of $W(u_0,u_1)$ in order to get a more explicit Hardy inequality, is a difficult problem. Actually, it is an open problem to give sufficient conditions guaranteeing that $W(u_0,u_1)$ is positive in a neighborhood of infinity. This is an interesting question, since the interpretation of the best constant and the best constant at infinity in term of the spectrum of the operator $\frac{1}{W(u_0,u_1)}P$ holds only if $W(u_0,u_1)$ is positive.\\

It is sometimes more natural to work with non-optimal (but good enough) potentials given by an explicit formula. In the supersolution construction, instead of two positive solutions $u_i$ of $Pu=0,$ one can take two (well chosen) positive supersolutions. For example, the  potential $\frac{1}{4\delta^2}$ of the Hardy inequality of Example \ref{ex_convex} is obtained by applying the supersolution construction with $u_0=1$ and $u_1=\delta$ (which is a supersolution, but not a solution of $P=-\Delta$). Furthermore, the potentials of the Examples \ref{ex_convex}, \ref{ex_improved_convex} and \ref{ex_series_convex} are not optimal, because the corresponding Hardy inequality can be improved, yet they are admissible potentials. We will also see (in Proposition \ref{Fi-Ter}) that each improvement of the Hardy inequality of Example \ref{ex_series_convex} can also be obtained using the supersolution construction, with explicit supersolutions $u_0$ and $u_1$ which are functions of $\delta$. It is thus tempting to ask whether the property that the spectrum of $\frac{1}{W(u_0,u_1)}P$ is $[1,\infty)$, valid in the case of the \textit{optimal} potentials obtained in \cite{DFP}, remains true for only \textit{admissible} potentials (that is, potentials in the class $\mathcal{A}(\Omega,P)$. In particular, it is interesting to ask the following question:\\

\begin{Ques} \label{spec}
{\em
For the Hardy inequalities of Examples \ref{ex_convex} and \ref{ex_series_convex} obtained respectively by Brezis-Marcus and Barbatis-Filippas-Tertikas, does it hold that the spectrum of $\frac{1}{W(u_0,u_1)}P$ is $[1,\infty)$?
}
\end{Ques}

\subsection{Our results and organization of the article}
In this article, we generalize the results of \cite{DFP} concerning the spectrum of $\frac{1}{W(u_0,u_1)}P$ for optimal potentials $W(u_0,u_1)$, to non-optimal potentials $W(u_0,u_1)$ obtained by the supersolution construction, provided that the function $u_0$ and $u_1$ are \textit{optimal approximate solutions} of $P$ at infinity. We will define later what \textit{optimal approximate solutions} of $P$ at infinity precisely means (see Definition \ref{approx_sol}), since it is technical. One of the main results of this article can then be roughly stated as follows (for a precise formulation, see Theorem \ref{general_spec}):

\begin{Thm}\label{general}

Let $(u_0,u_1)$ be a pair of optimal approximate solutions at infinity for $P$. Recall that $W(u_0,u_1):=\frac{1}{4}\left|\nabla \log \frac{u_0}{u_1}\right|^2$. Then the essential spectrum of $\frac{1}{W(u_0,u_1)}P$ is equal to $[1,\infty)$, and its spectrum below $1$ consists at most of a finite number of eigenvalues with finite multiplicity.

\end{Thm}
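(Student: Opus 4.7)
The plan is to establish the three components of the theorem in turn: (a) $\inf\sigma_{\mathrm{ess}}(\tfrac{1}{W}P) \geq 1$; (b) $[1,\infty) \subseteq \sigma_{\mathrm{ess}}(\tfrac{1}{W}P)$; and (c) the spectrum of $\tfrac{1}{W}P$ below $1$ is a finite set of eigenvalues of finite multiplicity. Part (a) is the easiest: invoke Persson's formula, which identifies $\inf \sigma_{\mathrm{ess}}(\tfrac{1}{W}P)$ with the best constant at infinity $\lambda_\infty(\Omega, P, W)$. The notion of ``optimal approximate solutions at infinity'' should be engineered precisely so that $\lambda_\infty \geq 1$: since $(u_0, u_1)$ are (approximate) positive supersolutions, the supersolution construction already gives the Hardy inequality for $W = W(u_0, u_1)$ with constant $1$, hence $\lambda_0 \geq 1$, and ``optimality'' at infinity upgrades this to $\lambda_\infty \geq 1$ via test functions supported near infinity. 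Once (a) is in place, part (c) is essentially automatic: any spectrum below $1$ lies outside $\sigma_{\mathrm{ess}}$ and is therefore discrete with eigenvalues of finite multiplicity; moreover $\tfrac{1}{W}P \geq 1$ from the Hardy inequality, which rules out eigenvalues below $1$ altogether. (A more general version of the statement, allowing slight slack in the Hardy constant, would require an IMS localization argument combining $\lambda_\infty \geq 1$ with finite-dimensionality of the ``bad'' part on compact subsets to bound the number of eigenvalues below $1$.)

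The main work, and the main obstacle, is part (b). For each $\lambda = 1 + t^2$ with $t \in \R$, I would construct a Weyl sequence using a generalized eigenfunction of exponential type. The model calculation is that if $u_0, u_1$ solved $Pu = 0$ exactly, then the complex-valued function
\begin{equation*}
v_\lambda := u_0^{(1+it)/2}\, u_1^{(1-it)/2} \;=\; (u_0 u_1)^{1/2} \exp\!\Bigl(\tfrac{it}{2} \log (u_0/u_1)\Bigr)
\end{equation*}
satisfies $(P - \lambda W) v_\lambda = 0$: the ``radial'' factor $(u_0 u_1)^{1/2}$ produces no residual when $Pu_i = 0$, while the oscillating phase contributes an extra $|1+it|^2 = \lambda$ times $W = \tfrac14|\nabla \log(u_0/u_1)|^2$. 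When $u_0, u_1$ are only optimal approximate solutions, the residual $(P-\lambda W)v_\lambda$ is a lower-order term concentrated near infinity, whose weighted $L^2(W\dx)$ norm is controlled by the approximation error quantified in Definition \ref{approx_sol}.

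Given $v_\lambda$, the Weyl sequence is obtained by smooth cutoffs $\chi_n$ supported on annuli escaping to infinity (adapted to the level sets of $\log(u_0/u_1)$), setting $\psi_n := \chi_n v_\lambda / \|\chi_n v_\lambda\|_{L^2(W\dx)}$. One then verifies the Weyl criterion: $\|\psi_n\|_{L^2(W\dx)} = 1$ by construction; $\psi_n \rightharpoonup 0$ because the supports escape to infinity; and most importantly $\|(\tfrac{1}{W}P - \lambda)\psi_n\|_{L^2(W\dx)} \to 0$. The last bound splits into the residual $(P-\lambda W)v_\lambda$ and the commutator $[P,\chi_n] v_\lambda$, which produces terms of order $|\nabla \chi_n|\,|\nabla v_\lambda|$ and $|\Delta \chi_n|\,|v_\lambda|$. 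The crux of the proof is choosing the cutoff scale so that both the residual and the commutator are negligible relative to $\|\chi_n v_\lambda\|_{L^2(W\dx)}$, using the ``optimality'' decay rate of $u_0, u_1$ at infinity. This interplay between the cutoff geometry, the oscillation frequency $t$, and the precise approximation hypothesis is what makes the formulation of ``optimal approximate solution'' in Definition \ref{approx_sol} delicate, and is where the bulk of the technical argument lies.
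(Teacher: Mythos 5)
Your overall outline matches the paper's three-part structure, and the idea for part (b) --- an oscillating complex-exponential $u_0^{(1+it)/2}u_1^{(1-it)/2}$ as an approximate generalized eigenfunction, cut off on annuli adapted to the level sets of $\log(u_0/u_1)$ --- is essentially the paper's construction (after the $h$-transform and change of measure, the paper uses $\varphi = e^{i\mu v}$ with $v = X_1^{-1}(u_0/u_1)$). However, there are two genuine gaps.

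First, your treatment of part (c) is wrong. You claim that ``$\tfrac{1}{W}P \geq 1$ from the Hardy inequality, which rules out eigenvalues below $1$ altogether.'' But Definition \ref{approx_sol} only constrains $u_0, u_1$ \emph{at infinity}; it does not require them to be global positive supersolutions, so the Hardy inequality with constant $1$ need not hold on all of $\Omega$, and $\lambda_0 < 1$ is entirely possible. Indeed the theorem explicitly allows finitely many eigenvalues below $1$, and Theorem \ref{improved_hardy} for non-mean-convex domains is a case where they occur. Moreover, even granting $\inf\sigma_{\mathrm{ess}} = 1$, discreteness of $\sigma(\tfrac{1}{W}P)\cap(-\infty,1)$ does not by itself give \emph{finiteness}: eigenvalues could accumulate at $1$ from below. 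The paper handles this via Lemma \ref{Morse}: using the identity in Lemma \ref{supconstruct}, one checks that $\sqrt{v}$ with $v = X_1^{-1}(u_0/u_1)$ is a positive supersolution of the transformed operator $L = u_{1/2}^{-1}(W^{-1}P - 1)u_{1/2}$ outside a compact set, and then invokes the finite-Morse-index result of \cite{Baptiste}. Your parenthetical remark about an IMS localization is pointing in a plausible alternative direction, but it is not carried out and the main claim that precedes it is false.

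Second, in part (b) you attribute the control of the commutator terms $|\nabla\chi_n||\nabla v_\lambda|$ and $|\Delta\chi_n||v_\lambda|$ to ``the optimality decay rate of $u_0, u_1$.'' This misidentifies the mechanism. Conditions (2) and (3) of Definition \ref{approx_sol} control only the \emph{residual} $(P - \lambda W)v_\lambda$; they say nothing about the commutator. The commutator contribution from the cutoff has nothing to do with how well $u_0, u_1$ solve $Pu=0$ --- it depends on the volume growth of the level sets of $\log(u_0/u_1)$. This is exactly what condition (4) of Definition \ref{approx_sol} (subexponential volume growth of the push-forward measure) is for: via Lemma \ref{expo_growth}, it lets one choose the outer cutoff radius $b_n$ so that the mass of the boundary annulus $\{a_n \leq v \leq a_n+1\}\cup\{b_n - 1 \leq v \leq b_n\}$ is an arbitrarily small fraction of the mass of the interior $\{a_n+1 \leq v \leq b_n-1\}$. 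Without this condition, your cutoff argument has no reason to close, regardless of the approximation quality of $u_0, u_1$.
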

Roughly speaking, the proof relies on the fact that there are \textit{approximate} generalized eigenfunctions of exponential type for $\frac{1}{W(u_0,u_1)}P$. As a main corollary of Theorem \ref{general}, we will be able to answer Question \ref{spec} (see Theorem \ref{improved_hardy}):

\begin{Thm}\label{convex_domain_spec}

Let $\Omega$ be a $C^2$, bounded, mean convex domain of $\R^n$, and assume that $D$ is chosen as in Example \ref{ex_series_convex}. Then for every $i\geq1$, the spectrum of $\frac{1}{\mathcal{J}_i}\left(-\Delta-\mathcal{W}_{i-1}\right)$ is $[1,\infty)$. Furthermore, without the mean convexity assumption on $\Omega$, the {\em essential} spectrum of $\frac{1}{\mathcal{J}_i}\left(-\Delta-\mathcal{W}_{i-1}\right)$ is $[1,\infty)$, and the intersection of the spectrum of $\frac{1}{\mathcal{J}_i}\left(-\Delta-\mathcal{W}_{i-1}\right)$ with $(-\infty,1)$ consists of (at most) a finite number of eigenvalues with finite multiplicity.

\end{Thm}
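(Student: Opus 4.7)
The plan is to realize $\mathcal{J}_i$ as $W(u_0,u_1)$ coming from the supersolution construction applied to the operator $P := -\Delta - \mathcal{W}_{i-1}$, verify that the chosen pair $(u_0,u_1)$ is an \emph{optimal approximate solution at infinity} in the sense of Definition \ref{approx_sol}, and then invoke Theorem \ref{general_spec} (the precise version of Theorem \ref{general}). The mean-convex case of the statement is then obtained by combining this essential-spectrum result with the Hardy inequality of Example \ref{ex_series_convex}.

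For the first step, I would use Proposition \ref{Fi-Ter} to produce the required pair; a natural choice is
$$u_\pm := \delta^{1/2}\prod_{k=1}^{i-1}X_k^{1/2}(\delta/D)\cdot X_i^{\pm 1/2}(\delta/D),$$
so that $\log(u_-/u_+) = -\log X_i(\delta/D)$. A direct chain-rule computation, together with $|\nabla \delta| = 1$ almost everywhere, gives
$$\bigl|\nabla\log X_i(\delta/D)\bigr|^2 = \delta^{-2}\prod_{k=1}^{i} X_k^2(\delta/D),$$
hence $W(u_-,u_+) = \mathcal{J}_i$. Positivity of $u_\pm$ on $\Omega$ is ensured by choosing $D$ as in Example \ref{ex_series_convex}, and the supersolution property with respect to $P$ rests on the Barbatis--Filippas--Tertikas computations, which exploit $-\Delta\delta \geq 0$ in a neighborhood of $\partial\Omega$ under the mean convexity hypothesis. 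In the non-mean-convex case, $u_\pm$ are only supersolutions in a tubular neighborhood of $\partial\Omega$; a compactly supported modification then produces globally defined objects with the same boundary asymptotics, which is all that is needed for an essential-spectrum statement.

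The bulk of the work lies in verifying Definition \ref{approx_sol}. Here ``infinity'' of the problem corresponds to $\delta\to 0$, i.e., the boundary of $\Omega$. I would need to show that the residuals $P u_\pm$ are negligible compared with $\mathcal{J}_i u_\pm$ as $\delta\to 0$, and that $u_-/u_+ = X_i^{-1}(\delta/D)\to \infty$ at the quantitative rate prescribed by the definition. Both points hinge on the asymptotic behavior of the iterated logarithms $X_k(\delta/D)$ near $\delta = 0$: each $X_k$ vanishes slowly, and every differentiation of $X_k$ gains a factor of $X_k$, so the error terms that appear when we compute $\Delta u_\pm$ are of strictly lower order than $\mathcal{J}_i u_\pm$. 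I expect this asymptotic matching to be the main technical obstacle, since it must be aligned precisely with the quantitative conditions built into Definition \ref{approx_sol}.

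Once the hypothesis of Theorem \ref{general_spec} has been verified, one obtains at once that $\sigma_{\mathrm{ess}}\bigl(\tfrac{1}{\mathcal{J}_i}P\bigr) = [1,\infty)$ and that $\sigma\bigl(\tfrac{1}{\mathcal{J}_i}P\bigr) \cap (-\infty,1)$ consists of at most finitely many eigenvalues of finite multiplicity; this is the second (unconditional) assertion. For the first assertion, the Hardy inequality of Example \ref{ex_series_convex} yields $\int_\Omega \mathcal{J}_i u^2 \leq \langle Pu,u\rangle$ for all $u \in C_0^\infty(\Omega)$ under mean convexity; by the spectral interpretation of the best constant for positive weights recalled before Lemma \ref{discrete}, this forces $\inf \sigma\bigl(\tfrac{1}{\mathcal{J}_i}P\bigr) \geq 1$, so the finite collection of sub-$1$ eigenvalues must be empty and $\sigma\bigl(\tfrac{1}{\mathcal{J}_i}P\bigr) = [1,\infty)$.
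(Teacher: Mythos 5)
Your overall strategy --- realize $\mathcal{J}_i$ as $W(u_0,u_1)$ for $P=-\Delta-\mathcal{W}_{i-1}$, verify Definition \ref{approx_sol}, invoke Theorem \ref{general_spec} for the essential spectrum together with finiteness of the sub-$1$ point spectrum, and in the mean-convex case use the Hardy inequality of Corollary \ref{Hardy FT} to rule out eigenvalues below $1$ --- is exactly the paper's argument in Theorem \ref{improved_hardy}. However, the explicit pair you wrote down is not the one the argument needs. Proposition \ref{Fi-Ter} furnishes $u_0=U_{0,i-1}=(\delta/D)^{1/2}\prod_{k=1}^{i-1}X_k^{-1/2}(\delta/D)$ and $u_1=U_{0,i-1}\,X_i^{-1}(\delta/D)$, whereas your $u_\pm=\delta^{1/2}\prod_{k=1}^{i-1}X_k^{+1/2}(\delta/D)\,X_i^{\pm1/2}(\delta/D)$ differ from this pair by the common factor $\prod_{k=1}^{i}X_k(\delta/D)$ (the exponents on $X_k$ for $k<i$ have the wrong sign). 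Since $u_+/u_-$ is unchanged by a common factor, you still get $W(u_+,u_-)=\mathcal{J}_i$ --- that is why your chain-rule computation looks right --- but the residuals $V_\pm:=Pu_\pm/u_\pm$, which are what conditions (2) and (3) of Definition \ref{approx_sol} actually constrain, are \emph{not} invariant under multiplying the pair by a common function.

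This discrepancy is not cosmetic, and it lies precisely at the spot you flag as ``the main technical obstacle.'' Take $i=1$, so $P=-\Delta-\tfrac{1}{4\delta^2}$. For your $u_+=\delta^{1/2}X_1^{1/2}(\delta/D)$ a direct computation gives
\[
\frac{Pu_+}{u_+}=-\frac{3}{4\delta^2}\,X_1^2\Bigl(\frac{\delta}{D}\Bigr)+\frac{\bigl(1+X_1(\delta/D)\bigr)(-\Delta\delta)}{2\delta},
\]
so $V_+/\mathcal{J}_1\to -3$ as $\delta\to0$; similarly $V_-/\mathcal{J}_1\to 1$. Hence $\tfrac{1}{2\mathcal{J}_1}(V_++V_-)\to -1$, and after multiplication by $X_1^{-1}(u_+/u_-)\to\infty$ condition (2) of Definition \ref{approx_sol} fails outright (condition (3) fails too). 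By contrast, with the pair from Proposition \ref{Fi-Ter} one finds $V_0,V_1=O(\delta^{-1})$, which is negligible against $\mathcal{J}_i\asymp\delta^{-2}X_1^2\cdots X_i^2$, so (2) and (3) hold; condition (4) then follows from the coarea formula, which in fact shows the push-forward measure has \emph{linear} growth. So: replace $X_k^{1/2}$ by $X_k^{-1/2}$ (equivalently, use $U_{0,i-1}$ and $U_{0,i-1}X_i^{-1}$ directly), and carry out the verification rather than defer it --- as the computation above shows, it is not a formality.
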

In particular, this proves Agmon's claim \ref{Agmon} about the spectrum of $\frac{1}{\delta^2}(-\Delta)$. As an immediate corollary, we recover the value of the best constant and of the best constant at infinity in the improved Hardy inequalities \eqref{improved_H}, a result already proved in \cite{BFT}. \\

Another result that we obtain as a consequence of Theorem \ref{general} concerns Hardy inequalities on minimal submanifolds $M^n\hookrightarrow \R^N$. Let us fix $x_0\in \R^N$, and denote by $r:=d_{\R^N}(x_0,\cdot)$, where $d_{\R^N}$ is the Euclidean distance. Carron obtained in \cite{Carron} the following Hardy inequality

\begin{equation}\label{hardy_mini}
\left(\frac{n-2}{2}\right)^2\int_M\frac{u^2}{r^2}\leq \int_M|\nabla u|^2,\,\forall u\in C_0^\infty(M).
\end{equation}
Let us denote by $\mathrm{II}$ the second fundamental form of the isometric immersion $M^n\hookrightarrow \R^N$, and denote by $\Delta_M$ the (negative) Laplacian on $M$. As a consequence of Theorem \ref{general}, we get the following result (see Theorem \ref{optimal_mini}):

\begin{Thm}\label{minimal_spec}
Assume that the total curvature of the minimal isometric immersion $M^n\hookrightarrow \R^N$ is finite, i.e. that

$$\int_M |\mathrm{II}|^{\frac{n}{2}}<\infty.$$
Then the spectrum of $r^2(-\Delta_M)$ is $\big[\left(\frac{n-2}{2}\right)^2,\infty\big)$.

\end{Thm}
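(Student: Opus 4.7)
The plan is to apply Theorem \ref{general} with the explicit pair $u_0 \equiv 1$, $u_1 = r^{2-n}$, and then transfer the resulting spectral information from $\tfrac{1}{W(u_0,u_1)}(-\Delta_M)$ to $r^2(-\Delta_M)$. The minimality of the immersion $M^n \hookrightarrow \R^N$ gives $\Delta_M |x-x_0|^2 = 2n$, and an elementary computation (using also $|\nabla_M r|^2 \leq |\nabla_{\R^N} r|^2 = 1$) yields
\begin{equation*}
-\Delta_M r^{2-n} \;=\; n(n-2)\, r^{-n}\bigl(1 - |\nabla_M r|^2\bigr) \;\geq\; 0,
\end{equation*}
so $u_1$ is a positive supersolution of $-\Delta_M$ away from $x_0$, as is $u_0 \equiv 1$. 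The supersolution construction therefore produces
\begin{equation*}
W(u_0,u_1) \;=\; \tfrac{1}{4}\bigl|\nabla_M \log(u_0/u_1)\bigr|^2 \;=\; \tfrac{(n-2)^2}{4}\cdot \frac{|\nabla_M r|^2}{r^2}.
\end{equation*}

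The principal obstacle is to verify that this pair $(u_0,u_1)$ qualifies as a pair of \emph{optimal approximate solutions at infinity} in the sense of Definition \ref{approx_sol}. This is precisely where the finite total curvature assumption enters: by a classical structure theorem (Anderson, Chen, Tysk), a complete minimal $M^n \hookrightarrow \R^N$ with $\int_M |\mathrm{II}|^{n/2} < \infty$ has finitely many ends, each asymptotic to an affine $n$-plane, and in particular $|\nabla_M r|^2 \to 1$ at infinity, with the decay of $1 - |\nabla_M r|^2$ quantitatively controlled by the tail of $|\mathrm{II}|$. I expect these asymptotic estimates, combined with the explicit formula above for $-\Delta_M r^{2-n}$, to match the decay rates required by Definition \ref{approx_sol}; carrying out this verification on each end is the heart of the argument and will require a careful bookkeeping of error terms against the geometric decay.

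Granting the verification, Theorem \ref{general} then gives that the essential spectrum of $\tfrac{1}{W(u_0,u_1)}(-\Delta_M)$ equals $[1,\infty)$. To pass to $r^2(-\Delta_M)$, I would observe that
\begin{equation*}
\frac{W(u_0,u_1)}{\bigl(\tfrac{n-2}{2}\bigr)^2 \, r^{-2}} \;=\; |\nabla_M r|^2 \;\longrightarrow\; 1 \ \text{at infinity}.
\end{equation*}
A Weyl-sequence argument (cut off a Weyl sequence for $\tfrac{1}{W(u_0,u_1)}(-\Delta_M)$ at a given $\lambda \geq 1$ away from any prescribed compact set) then transfers the essential spectrum, so the essential spectrum of $r^2(-\Delta_M)$ equals $\bigl[\bigl(\tfrac{n-2}{2}\bigr)^2,\infty\bigr)$. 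Finally, Carron's Hardy inequality \eqref{hardy_mini} bounds the bottom of the full spectrum of $r^2(-\Delta_M)$ from below by $\bigl(\tfrac{n-2}{2}\bigr)^2$, and combined with the essential spectrum computation this forces the full spectrum to coincide with $\bigl[\bigl(\tfrac{n-2}{2}\bigr)^2,\infty\bigr)$, as required.
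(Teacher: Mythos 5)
Your outline is essentially the paper's own approach, and the ingredients you list (the explicit supersolutions, the minimality identity, Anderson's asymptotics, Carron's inequality for the lower bound) are exactly the right ones. One genuine error of bookkeeping: with $u_0 \equiv 1$ and $u_1 = r^{2-n}$ you get $u_0/u_1 = r^{n-2} \to \infty$ at infinity, so condition (1) of Definition \ref{approx_sol} fails. You must take $u_0 = r^{2-n}$ and $u_1 \equiv 1$ instead; this leaves $W(u_0,u_1)$ unchanged but correctly gives $u_0/u_1 = r^{2-n} \to 0$, $V_1 \equiv 0$, and $V_0 = n(n-2)(1-|\nabla_M r|^2)/r^2$. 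With this labeling, Anderson's estimate $1-|\nabla_M r|^2 = O(r^{-1})$ indeed beats the required $o(X_1^2) = o((\log r)^{-2})$ and $o(X_1) = o((\log r)^{-1})$ rates, so conditions (2) and (3) hold.

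There are two non-cosmetic differences from the paper worth noting. First, rather than applying Theorem \ref{general} with $W(u_0,u_1) = \tfrac{(n-2)^2}{4}|\nabla_M r|^2/r^2$ and then transferring to $\left(\tfrac{n-2}{2}\right)^2 r^{-2}$ via Theorem \ref{ess_spectrum_equiv_pot} as you propose, the paper works directly with the target weight $W = \left(\tfrac{n-2}{2}\right)^2 r^{-2}$ and invokes the \emph{proof} of Proposition \ref{spectrum2} — this is possible because that proof only uses that $\tfrac{1}{W}\bigl|\nabla X_1^{-1}(r^{2-n})\bigr|^2$ converges to a constant at infinity, which here equals $4|\nabla_M r|^2 \to 4$, not that it equals $4$ identically. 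Both routes work; the paper's is a bit leaner since it skips the transfer step. Second, the item you explicitly defer — condition (4) of Definition \ref{approx_sol}, subexponential growth of the push-forward measure — is the substantive remaining verification and is precisely where the Euclidean volume growth from Anderson's lemma enters. The paper replaces $v = X_1^{-1}(r^{2-n})$ by the asymptotically proportional $v = \log r$, uses the coarea formula to identify the push-forward of $r^{2-n}W\,\mathrm{d}x \asymp r^{-n}\,\mathrm{d}x$ with $t^{-n}\,\mathrm{d}V(t)$ on $[e^a, e^b]$, and an integration by parts together with $V(t) \asymp t^n$ gives $\int_a^b \asymp \log b$ while increments over $[b-1,b]$ stay bounded; the ratio therefore goes to zero, giving $\sigma(\chi) = 0$. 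Supplying this computation would close the gap you flag; the final passage from essential spectrum to full spectrum via Carron's inequality \eqref{hardy_mini} is correct as you state it.
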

To conclude this introduction, we propose the following open problem:

\begin{Ques} 
{\em 
Are there embedded eigenvalues for the operators $\frac{1}{\mathcal{J}_i}\left(-\Delta-\mathcal{W}_{i-1}\right)$ and $r^2(-\Delta_M)$, appearing respectively in Theorem \ref{convex_domain_spec} and Theorem \ref{minimal_spec}?
}
\end{Ques}

The structure of the article is as follows: in Section \ref{Sec:general_theory}, we establish the general result (Theorem \ref{general}) that will be the key to studying the spectrum of our operators. In Section \ref{Sec:bounded_domain}, we apply this result to the Hardy inequality \eqref{improved_H}, and we prove Theorem \ref{convex_domain_spec}. In Section \ref{Sec:minimal_immersions}, we consider the case of the Hardy inequality on minimal submanifolds of $\R^n$, and we prove Theorem \ref{minimal_spec}. In Section \ref{Sec:Agmon_metric}, we study an exponential volume growth property, which shows up naturally for Hardy inequalities with admissible potentials.\\

\section{General theory}\label{Sec:general_theory}

\subsection{Preliminaries}
{\em Throughout the paper, the potentials appearing in the various Hardy inequalities will always be non-negative. As usual, $C$ will denote a generic constant, whose value can change from line to line.}\\

\textit{Notation:} For two positive functions $f$ and $g$, we will write $f\asymp g$ if there is a positive constant $C$ such that

$$C^{-1}f\leq g\leq Cf.$$
\vskip5mm
Let $\Omega$, $n\geq 2$ be a smooth domain in $\R^n$ (or more generally, a smooth, connected manifold of dimension $n$). The \textit{infinity} of $\Omega$ is the ideal point in the one-point compactification of $\Omega$. From this, we derive the notions of neighborhood of infinity in $\Omega$, of convergence at infinity for a real function defined in $\Omega$, etc... Let $\nu$ be a positive measure on $\Omega$. Consider a symmetric second-order elliptic operator $L$ on $\Omega$ with real coefficients in divergence form of the type

\begin{equation} \label{div_P}
Pu=- \div \big(A \grad u\big) +cu,
\end{equation}
Here, $-\div$ is the formal adjoint of the gradient with respect to the measure $\nu$.  We assume that for every $x\in\Omega$ the matrix $A(x)=\big(a^{ij}(x)\big)_{i,j}$ is symmetric and that the real quadratic form
\be\label{ellip}
 \langle\xi,A(x) \xi\rangle := \sum_{i,j =1}^n \xi_i a^{ij}(x) \xi_j \qquad
 \xi \in \Real ^n
\end{equation}
is positive definite. We will denote the norm associated to this quadratic form by $|\cdot|_A$, that is 

$$|\xi|_A^2:=\langle\xi,A(x) \xi\rangle.$$
Moreover, it is assumed that $P$ is locally uniformly elliptic, and that $A$ and $c$ are locally bounded in $\Omega$.

We denote by $q$ the quadratic form associated to $P$, defined by

$$q(u)=\int_\Omega\left( \langle A\nabla u,\nabla u\rangle+cu^2\right)\mathrm{d}\nu ,\,\,\forall u\in C_0^\infty(\Omega).$$
We will assume that $q$ is nonnegative, and consider the Friedrichs extension of $P$, that we will also denote $P$. It is a self-adjoint operator on $L^2(\Omega,\mathrm{d}\nu)$. If $u\in W^{1,2}_{loc}(\Omega)$ and $f\in L_{loc}^\infty(\Omega)$, we will say that the equation

$$Pu=f$$
holds in $\Omega$ in the \textit{weak sense} if for every $\varphi\in C_0^\infty(\Omega)$,

$$\int_\Omega \left( \langle A\nabla u,\nabla \varphi\rangle+cu\varphi\right)\mathrm{d}\nu=\int_\Omega f\varphi \,\mathrm{d}\nu.$$
In a similar way, we define the notion of weak supersolutions/subsolutions for $P$.\\

We will make use of the following two constructions. The first one is called the \textit{h-transform} and is defined as follows: if $h>0$ is in $C^{1,\alpha}_{loc}(\Omega)$ and $Ph\in L_{loc}^\infty(\Omega)$, we define the operator 

$$P_h:=h^{-1}Ph,$$
which is self-adjoint on $L^2(\Omega,h^2\mathrm{d}\nu)$. Also, the following formula holds in the weak sense:

\begin{equation}\label{h-transform}
P_hu=-\div_{h}(A\nabla u)+\frac{Ph}{h}u,
\end{equation}
where 

$$\div_{h}(X)=\div(X)+2\langle h^{-1}\nabla h,X\rangle$$
is the divergence with respect to the measure $h^2\mathrm{d}\nu$.\\

We call the second construction the \textit{change of measure}. Let $W>0$, then we can consider the operator $\frac{1}{W}P$. It is a self-adjoint operator on $L^2(\Omega,W\mathrm{d}\nu)$, and its quadratic form is the same as $P$. If $\div_W$ is the divergence with respect to the measure $W\mathrm{d}\nu$, then 

\begin{equation}\label{change_measure}
\frac{1}{W}Pu=-\div_W\left(\frac{A}{W} \grad u \right)+\frac{c}{W}u,
\end{equation}
which follows from the formula

$$\div_W(X)=\frac{1}{W}\div(WX).$$

\subsection{A spectral result}

We now present a general spectral result concerning operators of the form $\frac{1}{W}P$. It is probable that this result, maybe in a weaker form, is already known to experts, even if we have been unable to find a reference in the literature that covers such a general case.

\begin{Thm}\label{ess_spectrum_equiv_pot}
Assume that $W_1$ and $W_2$ are two positive (in a neighborhood of infinity) potentials in $L^\infty_{loc}(\Omega)$, such that

$$W_1(x)\sim W_2(x)\mbox{ when }x\to\infty.$$
Then $\sigma_{ess}(\frac{1}{W_1}P)=\sigma_{ess}(\frac{1}{W_2}P)$.

\end{Thm}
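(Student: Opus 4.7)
The natural tool is Weyl's singular sequence criterion for essential spectrum. Setting $P_i := \frac{1}{W_i}P$ and $\mathcal{H}_i := L^2(\Omega, W_i \dnu)$, one has $\lambda \in \sigma_{ess}(P_i)$ if and only if there is a sequence $\{u_n\}\subset \mathrm{dom}(P_i)$ with $\|u_n\|_{\mathcal{H}_i}=1$, $u_n\rightharpoonup 0$ in $\mathcal{H}_i$, and $(P_i-\lambda)u_n\to 0$ in $\mathcal{H}_i$. By the symmetric roles of $W_1$ and $W_2$, it suffices to show $\sigma_{ess}(P_1)\subseteq\sigma_{ess}(P_2)$, so I fix $\lambda\in\sigma_{ess}(P_1)$ with Weyl sequence $\{u_n\}$ and construct one for $P_2$ at $\lambda$.

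\emph{Step 1: localize $\{u_n\}$ at infinity.} Since $\|(P_1-\lambda)u_n\|_{\mathcal{H}_1}$ is bounded and $\|u_n\|_{\mathcal{H}_1}=1$, the quadratic form values $q(u_n)=\langle P_1 u_n,u_n\rangle_{\mathcal{H}_1}$ stay bounded. Local uniform ellipticity of $P$ makes $\{u_n\}$ bounded in $H^1_{\mathrm{loc}}(\Omega)$, so Rellich yields $u_n\to 0$ in $L^2_{\mathrm{loc}}$ along a subsequence. Testing $\phi u_n$ against the equation $Pu_n = \lambda W_1 u_n + W_1(P_1-\lambda)u_n$ with $\phi\in C_0^\infty$ and integrating by parts gives the sharper decay $\nabla u_n\to 0$ in $L^2_{\mathrm{loc}}$. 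Choosing a smooth cutoff $\chi_R$ vanishing on a compact set $K_R$ and equal to $1$ outside a slightly larger compact, the product rule for $P$ produces the commutator identity
\begin{equation*}
(P_1-\lambda)(\chi_R u_n) = \chi_R(P_1-\lambda)u_n - \frac{1}{W_1}\Bigl[2\langle A\nabla\chi_R,\nabla u_n\rangle + u_n\,\div(A\nabla\chi_R)\Bigr],
\end{equation*}
whose error term is supported in an annulus where the two local convergences above force its $\mathcal{H}_1$-norm to tend to $0$ as $n\to\infty$, for each fixed $R$. A standard diagonal extraction then yields a Weyl sequence $v_k:=\chi_{R_k}u_{n_k}$ for $P_1$ at $\lambda$ with $R_k\to\infty$ and $\supp(v_k)$ escaping every compact set.

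\emph{Step 2: transfer the sequence to $P_2$.} On $\supp(v_k)$, the hypothesis $W_1\sim W_2$ yields $|W_1/W_2-1|\le\varepsilon_k$ with $\varepsilon_k\to 0$, so
\begin{equation*}
\|v_k\|_{\mathcal{H}_2}^2=\int\frac{W_2}{W_1}W_1 v_k^2\dnu = (1+o(1))\|v_k\|_{\mathcal{H}_1}^2\longrightarrow 1,
\end{equation*}
and concentration at infinity gives $v_k\rightharpoonup 0$ in $\mathcal{H}_2$. The algebraic identity
\begin{equation*}
(P_2-\lambda)v_k = \frac{W_1}{W_2}(P_1-\lambda)v_k + \lambda\,\frac{W_1-W_2}{W_2}\,v_k,
\end{equation*}
together with the pointwise bounds $W_1/W_2\le 1+\varepsilon_k$ and $(W_1-W_2)^2/W_2\le\varepsilon_k^2\,W_2$ on $\supp(v_k)$, immediately shows $\|(P_2-\lambda)v_k\|_{\mathcal{H}_2}\to 0$. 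After renormalizing, $\{v_k/\|v_k\|_{\mathcal{H}_2}\}$ is a Weyl sequence for $P_2$ at $\lambda$, so $\lambda\in\sigma_{ess}(P_2)$.

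\emph{Main obstacle.} The delicate point is the improvement from $u_n\to 0$ in $L^2_{\mathrm{loc}}$ to $\nabla u_n\to 0$ in $L^2_{\mathrm{loc}}$: this does \emph{not} follow from $H^1_{\mathrm{loc}}$-boundedness alone (already on the real line, $u_n=n^{-1}\sin(n^2 x)$ is a counterexample), and must be extracted from the approximate eigenvalue equation via the quadratic-form identity used above. This local gradient decay is precisely what controls the commutator $[\chi_R,P_1]u_n$ and allows the sequence to be pushed out to infinity. The remaining steps are routine quadratic-form manipulations; the local positivity of the $W_i$ near infinity and their local boundedness ensure that the cutoff supports eventually lie in a region where both weights are bounded away from zero and comparable, so the division by $W_1$ in the commutator term is harmless.
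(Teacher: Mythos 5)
Your proof matches the paper's strategy exactly: produce a Weyl sequence for $\frac{1}{W_1}P$ whose supports escape to infinity, then transfer it to $\frac{1}{W_2}P$ by comparing the two operators on that localized sequence, with your Step~2 reproducing the paper's estimate for $\|(\frac{1}{W_2}P-\lambda)u_n\|_{L^2(W_2\dnu)}$ almost verbatim. The only difference is that the paper delegates the localization of the Weyl sequence to the decomposition principle for essential spectra (citing Donnelly--Li and Glazman), whereas you spell it out via a Caccioppoli inequality and a cutoff-commutator estimate — a reasonable elaboration of the step the paper itself only says ``follows essentially'' from that principle, and which carries the same implicit assumption that the $W_i$ are locally bounded below near infinity so that control in $L^2(W_1\dnu)$ gives local $L^2$ control.
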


\begin{proof}

Let $\lambda\in \sigma_{ess}(\frac{1}{W_1}P)$. Let $(\Omega_n)_{n\in\mathbb{N}}$ be an exhaustion of $\Omega$. Then, using a ground state transform, it follows essentially from the decomposition principle for the essential spectrum (see \cite{DL} or \cite{Gl}) that for every compact set $K\Subset\Omega$, there is a Weyl sequence $(u_n)_{n\in\mathbb{N}}$ associated to $\lambda$, orthonogal in $L^2(W_1\mathrm{d}\nu)$, such that for every $n\in \mathbb{N}$, the support of $u_n$ is included in $\Omega\setminus \Omega_n$. By definition of a Weyl sequence, if we denote by $||\cdot||$ the norm in $L^2(W_1\mathrm{d}\nu)$, there holds

$$\lim_{n\to\infty}\frac{||(\frac{1}{W_1}P-\lambda)u_n||}{||u_n||}=0.$$
Since $W_1(x)\sim_{x\to\infty}W_2(x)$ and given the hypothesis on the support of $u_n$, one has, when $n\to\infty$,

$$||u_n||\sim ||u_n||_{L^2(W_2\mathrm{d}\nu)}.$$
Also, using the hypothesis on the support of $u_n$, and the fact that $W_1$ and $W_2$ are equivalent at infinity, one has for $n$ big enough,

$$\begin{array}{rcl}
||(\frac{1}{W_2}P-\lambda)u_n||_{L^2(W_2\mathrm{d}\nu)}&=&\left(\int_\Omega |(P-\lambda W_2)u_n|^2\frac{\mathrm{d}\nu}{W_2}\right)^{1/2}\\\\
&\leq& \left(\int_\Omega |(P-\lambda W_1)u_n|^2\frac{\mathrm{d}\nu}{W_2}\right)^{1/2}\\\\
&&+|\lambda|\left(\int_\Omega |(W_1- W_2)u_n|^2\frac{\mathrm{d}\nu}{W_2}\right)^{1/2}\\\\
&\leq& 2||(\frac{1}{W_1}P-\lambda)u_n||\\\\
&&+|\lambda|\left(\int_\Omega \left|\frac{W_1- W_2}{W_2}\right|^2u_n^2W_2\mathrm{d}\nu\right)^{1/2}\\\\
&\leq& o(||u_n||).
\end{array}$$
Consequently, $u_n$ is also a Weyl sequence for $\frac{1}{W_2}P$, associated to $\lambda$. The hypothesis on the support of $u_n$ now implies that $\lambda$ is in the essential spectrum of $\frac{1}{W_2}P$.

\end{proof}
Theorem \ref{ess_spectrum_equiv_pot} actually allows us to get a first quick proof of Agmon's claim \ref{Agmon}, relying on the results of \cite{DFP}. An alternative proof, which works more generally for improved Hardy inequalities \eqref{improved_H}, will be given in Section 3.

\begin{Cor}

If $\Omega$ is a bounded, $C^2$ domain in $\R^n$, then the essential spectrum of $\delta^{-2}\Delta$ is $[\frac{1}{4},\infty)$.

\end{Cor}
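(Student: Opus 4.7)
The plan is to combine the supersolution construction of \cite{DFP} with Theorem \ref{ess_spectrum_equiv_pot}. Fix any point $x_0\in\Omega$ and let $G(\cdot,x_0)$ denote the Dirichlet Green function of $-\Delta$ on $\Omega$ with pole at $x_0$; this is a positive harmonic function on $\Omega\setminus\{x_0\}$, and together with the constant function $1$ it forms a pair of linearly independent positive solutions of $-\Delta u=0$ on $\Omega\setminus\{x_0\}$ with $G/1\to 0$ at $\partial\Omega$ and $G/1\to\infty$ at $x_0$. By the results of \cite{DFP} recalled in the introduction, the associated potential
\[
W_{\mathrm{opt}}(x) := W(G(\cdot,x_0),1) = \tfrac14\,\bigl|\nabla\log G(x,x_0)\bigr|^2
\]
is optimal and, provided it is positive near infinity of $\Omega\setminus\{x_0\}$ (which is checked below), satisfies $\sigma_{ess}\bigl(W_{\mathrm{opt}}^{-1}(-\Delta)\bigr)=[1,\infty)$.

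The key analytic step is the boundary asymptotic of $W_{\mathrm{opt}}$ as $x\to\partial\Omega$. Using the $C^2$ hypothesis, Hopf's boundary lemma together with standard boundary regularity for Green functions yields
\[
G(x,x_0)=h(y)\,\delta(x)+O(\delta(x)^2),\qquad \nabla G(x,x_0)=-h(y)\,\nu(y)+o(1)
\]
as $x\to y\in\partial\Omega$, for some continuous $h>0$ on $\partial\Omega$, where $\nu$ denotes the outward unit normal. Since also $\nabla\delta(x)\to -\nu(y)$ and $|\nabla\delta(x)|\to 1$ under the same limit, a direct computation gives
\[
\bigl|\nabla\log G(x,x_0)\bigr|^2=\frac{|\nabla G(x,x_0)|^2}{G(x,x_0)^2}=\frac{1}{\delta(x)^2}\bigl(1+o(1)\bigr)\qquad\text{as } x\to\partial\Omega,
\]
so $W_{\mathrm{opt}}(x)\sim \tfrac{1}{4\delta(x)^2}$ at infinity and, in particular, $W_{\mathrm{opt}}$ is positive in a neighborhood of $\partial\Omega$.

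With this asymptotic, Theorem \ref{ess_spectrum_equiv_pot} applied to $W_1=W_{\mathrm{opt}}$ and $W_2=\tfrac{1}{4\delta^2}$ yields
\[
\sigma_{ess}\bigl(4\delta^2(-\Delta)\bigr)=\sigma_{ess}\bigl(W_{\mathrm{opt}}^{-1}(-\Delta)\bigr)=[1,\infty),
\]
which upon trivial rescaling gives $\sigma_{ess}\bigl(\delta^2(-\Delta)\bigr)=[\tfrac14,\infty)$, as claimed. The main obstacle is the uniform two-sided equivalence $W_{\mathrm{opt}}\sim \tfrac{1}{4\delta^2}$ up to $\partial\Omega$: it relies on the non-degeneracy $\partial_\nu G<0$ on all of $\partial\Omega$ (via Hopf) and on enough $C^{1,\alpha}$ regularity of $G$ up to the boundary to make the $o(1)$ terms in $\nabla G$ genuinely vanish uniformly in $y\in\partial\Omega$; both use the $C^2$ hypothesis on $\Omega$ essentially. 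A secondary technical point is that interior critical points of $G$, where $W_{\mathrm{opt}}$ may vanish, do not affect the essential spectrum, since the latter depends only on the behavior at infinity.
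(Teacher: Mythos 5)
Your proof is correct and follows essentially the same route as the paper's: take $W_{\mathrm{opt}}=\tfrac14|\nabla\log G(\cdot,x_0)|^2$, invoke the DFP result $\sigma_{ess}\bigl(W_{\mathrm{opt}}^{-1}(-\Delta)\bigr)=[1,\infty)$, and transfer to $\tfrac1{4\delta^2}$ via Theorem \ref{ess_spectrum_equiv_pot}. The only difference is that where the paper simply cites \cite[Example 13.2]{DFP} for the boundary asymptotic $W_{\mathrm{opt}}\sim\tfrac1{4\delta^2}$, you sketch the Hopf/boundary-regularity argument behind it (and usefully note that interior zeros of $W_{\mathrm{opt}}$ are harmless since only the behavior near $\partial\Omega$ matters); that sketch is accurate.
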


\begin{proof}
Let $W=\frac{1}{4}\left|\frac{\nabla G}{G}\right|^2$ be an optimal weight in the sense of \cite{DFP}, where $G$ is the Green function of $\Delta$ with pole at some fixed point $x_0\in \Omega$. It has been proved in \cite[Example 13.2]{DFP} that as $x\to\partial \Omega$,

$$W\sim \frac{1}{4\delta^2}.$$
Furthermore, according to \cite[Theorem 2.2]{DFP}, the (essential) spectrum of $\frac{1}{W}\Delta$ is $[1,\infty)$. By Theorem \ref{ess_spectrum_equiv_pot}, the essential spectrum of $\delta^{-2}\Delta$ is equal to the essential spectrum of $4\frac{1}{W}\Delta$, and thus is equal to $[\frac{1}{4},\infty)$.

\end{proof}
Another direct application of Theorem \ref{ess_spectrum_equiv_pot} and of the results of \cite{DFP} is to multipolar Hardy inequalities. Let $x_1,\cdots,x_N$, $N\geq2$ be distinct points in $\R^n$, and consider the positive weight

$$W=\left(\sum_{1\leq i<j\leq N}\frac{|x_i-x_j]^2}{|x-x_i|^2|x-x_j|^2}\right).$$
In \cite{CZ}, the following multipolar Hardy inequality was shown:

\begin{equation}\label{multipolar}
\int_{\R^n}|\nabla u|^2\geq\left(\frac{n-2}{N}\right)^2 \int_{\R^n}Wu^2,\qquad\forall u\in C_0^\infty(\R^n).
\end{equation}
It was proved in \cite[Remark B2 and B.3]{DFP} that $\Delta-\left(\frac{n-2}{N}\right)^2W$ is {\em critical}, but the weight $W$ is {\em not optimal}, i.e. does not belong to $\mathcal{O}(\R^n\setminus \{x_1,\cdots,x_N\})$ (one reason is that the constant $\left(\frac{n-2}{N}\right)^2$ in \eqref{multipolar} is not optimal for test functions supported outisde a ball $B(0,R)$, for $R$ large enough). From Theorem \ref{ess_spectrum_equiv_pot}, one can deduce the following result:

\begin{Cor}[Spectrum for multipolar Hardy inequalities]

Denote $C(N)=\frac{N^2}{4(N-1)}$. If $N=2$, then the (essential) spectrum of $\frac{1}{W}\Delta$ is $[\left(\frac{n-2}{2}\right)^2,\infty)$. If $N>2$, then the essential spectrum of $\frac{1}{W}\Delta$ is $[C(N)\left(\frac{n-2}{N}\right)^2,\infty)$, the bottom of the spectrum of $\frac{1}{W}\Delta$ is $\left(\frac{n-2}{N}\right)^2$, and the positive function $v=\prod_{i=1}^N|x-x_i|^{\frac{2-n}{N}}$ is eigenfunction for $\frac{1}{W}\Delta$, associated to the eigenvalue $\left(\frac{n-2}{N}\right)^2$.

\end{Cor}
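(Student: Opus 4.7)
The plan is to proceed in three stages: verify the explicit positive function $v$ satisfies the eigenvalue equation and identify the bottom of the spectrum via the Hardy inequality plus Allegretto--Piepenbrink; check the $L^2$-integrability which gives the eigenvalue assertion for $N > 2$; and finally compute the essential spectrum via Theorem~\ref{ess_spectrum_equiv_pot} by comparing $W$ to a weight equal to $\frac{N-1}{|x-x_i|^2}$ near each pole $x_i$.

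Writing $g_i(x) := |x-x_i|^{2-n}$ (harmonic on $\mathbb{R}^n \setminus \{x_i\}$), we have $\log v = \frac{1}{N}\sum_i \log g_i$; the identity $\Delta v/v = \Delta \log v + |\nabla \log v|^2$ together with $\Delta \log g_i = -|\nabla \log g_i|^2$ reduces the eigenfunction check to expanding the cross terms $\nabla \log g_i \cdot \nabla \log g_j$, and plugging in $(x-x_i)\cdot(x-x_j) = \frac{1}{2}(|x-x_i|^2 + |x-x_j|^2 - |x_i - x_j|^2)$ the pure $|x-x_k|^{-2}$ contributions cancel, leaving exactly $\Delta v/v = -\frac{(n-2)^2}{N^2} W$; that is $-\Delta v = \lambda_0 W v$ with $\lambda_0 = \left(\frac{n-2}{N}\right)^2$. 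Positivity of $v$ combined with the multipolar Hardy inequality \eqref{multipolar} and Allegretto--Piepenbrink identifies $\lambda_0$ as the bottom of the spectrum of $\frac{1}{W}(-\Delta)$. For the $L^2$-integrability check: near each pole $Wv^2 \asymp |x-x_i|^{2(2-n)/N - 2}$, which is integrable in $\mathbb{R}^n$ iff $2(2-n)/N > 2-n$, equivalent to $N > 2$; at Euclidean infinity $Wv^2 \asymp |x|^{-2n}$ is integrable for every $n \ge 3$. Consequently $v \in L^2(W\,dx)$ precisely when $N > 2$, and in that regime $\lambda_0$ is a genuine eigenvalue with eigenfunction $v$.

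For the essential spectrum, introduce an auxiliary weight $\widetilde{W}$ equal to $\frac{N-1}{|x-x_i|^2}$ on a small punctured ball $B(x_i,r) \setminus \{x_i\}$ around each pole and equal to $W$ elsewhere. The Taylor expansion $\frac{|x_i-x_j|^2}{|x-x_i|^2|x-x_j|^2} = \frac{1}{|x-x_i|^2}\bigl(1 + O(|x-x_i|)\bigr)$ as $x \to x_i$ yields $W(x) = \frac{N-1}{|x-x_i|^2}\bigl(1 + O(|x-x_i|)\bigr)$ near $x_i$, and $\widetilde{W} = W$ at Euclidean infinity, so $\widetilde{W} \sim W$ at every end of $\Omega$; by Theorem~\ref{ess_spectrum_equiv_pot} the essential spectra of $\frac{1}{W}(-\Delta)$ and $\frac{1}{\widetilde{W}}(-\Delta)$ coincide. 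A decomposition-principle argument then evaluates $\sigma_{\mathrm{ess}}(\frac{1}{\widetilde{W}}(-\Delta))$ end by end: on a neighbourhood of each pole the operator is $\frac{|x-x_i|^2}{N-1}(-\Delta)$, whose spectrum on $\mathbb{R}^n \setminus \{x_i\}$ is (by the single-pole optimal Hardy result of \cite{DFP}) $\frac{1}{N-1}\bigl[\frac{(n-2)^2}{4},\infty\bigr) = [C(N)\lambda_0,\infty)$; whereas at Euclidean infinity the sub-Hardy decay $\widetilde{W} \asymp |x|^{-4}$ combined with the classical Hardy inequality forces the local best constant to be $+\infty$, so that end contributes nothing. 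Together this gives $\sigma_{\mathrm{ess}}(\frac{1}{W}(-\Delta)) = [C(N)\lambda_0,\infty)$; for $N = 2$ this equals $[\lambda_0,\infty)$ (since $C(2) = 1$), and combined with the absence of an $L^2$-eigenfunction (from the integrability check) yields the full spectrum claim in that case.

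The step I expect to be most delicate is the decomposition-principle reduction for the essential spectrum. The lower bound $\inf \sigma_{\mathrm{ess}} \ge C(N)\lambda_0$ follows cleanly from Persson's formula together with the classical Hardy inequality at each pole; the upper inclusion $[C(N)\lambda_0,\infty) \subseteq \sigma_{\mathrm{ess}}$ requires lifting Weyl sequences from the single-pole model operator $\frac{|x-x_i|^2}{N-1}(-\Delta)$ into $\Omega$ via cutoffs supported in $B(x_i,r) \setminus \{x_i\}$, the compatibility being guaranteed by the asymptotic equivalence $\widetilde{W} \sim (N-1)|x-x_i|^{-2}$ near each $x_i$.
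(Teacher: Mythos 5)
Your argument matches the paper's proof: localize the essential spectrum via the decomposition principle, exploit the asymptotic equivalence $W\sim (N-1)|x-x_j|^{-2}$ near each pole together with Theorem~\ref{ess_spectrum_equiv_pot} and the single-pole optimal-weight result of \cite{DFP}, and use the decay $W\asymp |x|^{-4}$ at Euclidean infinity (where $\lambda_\infty=\infty$) to rule out essential spectrum there. The only place you depart is that you verify the eigenvalue equation $-\Delta v = \left(\tfrac{n-2}{N}\right)^2 W v$ and the $L^2(W\,\mathrm{d}x)$-membership of $v$ for $N>2$ by direct computation, where the paper simply cites \cite[Remark B.2]{DFP} --- a worthwhile self-contained supplement rather than a different route.
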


\begin{Rem}
{\em 
One can actually show, using arguments similar to the one appearing in the proofs of Lemma \ref{Morse} and Proposition \ref{Morse2}, that if $N>2$, $\frac{1}{W}\Delta$ has a finite number of eigenvalues, each with finite multiplicity, belonging to $[\left(\frac{n-2}{N}\right)^2,C(N)\left(\frac{n-2}{N}\right)^2)$. This in turn implies (see \cite{Baptiste}) that the eigenspace associated to $C(N)\left(\frac{n-2}{N}\right)^2$ is finite dimensional.
}

\end{Rem}
\begin{proof}
Denote $\mathcal{L}:=\frac{1}{W}\Delta$. Let $\varepsilon>0$ be small enough such that the balls $B(x_j,\varepsilon)$ are disjoint. Let $K_1$ be a regular compact set containing the balls $B(x_j,\varepsilon)$, $j=1,\cdots, N$, and denote $K_2$ the complement in $K_1$ of $\cup_{i=1}^NB(x_j,\varepsilon)$. The decomposition principle for the essential spectrum (see \cite{DL} or \cite{Gl}) implies that the essential spectrum of $\mathcal{L}$ is equal to the essential spectrum of $\mathcal{L}$ on $L^2(\R^n\setminus K_1,W\mathrm{d}x)$, with Neumann boundary conditions on $\partial K_1$. Thus, it consists of the union of the essential spectrum of $\mathcal{L}$ on $B(x_j,\varepsilon)$, $j=1,\cdots,N$ with Neumann boundary conditions, and of the essential spectrum of $\mathcal{L}$ on $\R^n\setminus K_1$, with Neumann boundary conditions on $\partial K_1$. When $x\to x_j$, one has (cf \cite[Remark B.3]{DFP})

$$\left(\frac{n-2}{N}\right)^2W\sim C(N)^{-1}\frac{C_H}{|x-x_j|^2}=C(N)^{-1}W_{opt,j},$$
where $C_H:=\left(\frac{n-2}{2}\right)^2$. Since $W_{opt,j}$ is an optimal weight, by \cite[Theorem 2.2]{DFP}, the essential spectrum of $W_{opt,j}^{-1}\Delta$ is $[1,\infty)$. More precisely, for every $\lambda\in [1,\infty)$, one can find a Weyl sequence for $W_{opt,i}^{-1}\Delta$ supported in $B(x_j,\frac{\varepsilon}{2})$. This implies that the essential spectrum of $W^{-1}_{opt,j}\Delta$ on $B(x_j,\varepsilon)$, with Neumann boundary conditions, is equal to $[1,\infty)$. Applying Theorem \ref{ess_spectrum_equiv_pot}, one obtains that the essential spectrum of $\mathcal{L}$ on $B(x_j,\varepsilon)$, $j=1,\cdots,N$ with Neumann boundary conditions is equal to $[C(N)\left(\frac{n-2}{N}\right)^2,\infty)$. Moreover, when $|x|\to\infty$, it holds that

$$W\asymp\frac{1}{|x|^4},$$
and therefore

$$\lambda_\infty(\R^n\setminus K_1,\Delta,W)=\infty,$$
which implies, according to Persson's formula, that $\mathcal{L}$ on $\R^n\setminus K_1$, with Neumann boundary conditions on $\partial K_1$, has no essential spectrum. Therefore, the essential spectrum of $\mathcal{L}$ on $\R^n$ is $[C(N)\left(\frac{n-2}{N}\right)^2,\infty)$. The statement that if $N>2$, the bottom of the spectrum of $\mathcal{L}$ is $\left(\frac{n-2}{N}\right)^2$ and that $v$ is eigenfunction follows from \cite[Remark B.2]{DFP}.

\end{proof}
However, it is more delicate to use Theorem \ref{ess_spectrum_equiv_pot} and \cite[Theorem 2.2]{DFP} in order to prove Theorems \ref{convex_domain_spec} and \ref{minimal_spec} in a similar way. More precisely, in order to prove Theorem \ref{minimal_spec}, one would have to show that as $x\to\infty$ in $M$,

\begin{equation}\label{Green1}
\frac{1}{4}\left|\frac{\nabla G(x)}{G(x)}\right|^2\sim \left(\frac{n-2}{2}\right)^2\frac{1}{r^2(x)},
\end{equation}
where $G$ is the Green function of the Laplacian on $M$, with pole at some fixed point of $M$. In the case of Theorem \ref{convex_domain_spec}, one would have to show that, as $x\to\partial \Omega$,

\begin{equation}\label{Green2}
\frac{1}{4}\left|\frac{\nabla G_{i-1}(x)}{G_{i-1}(x)}\right|^2\sim \mathcal{J}_i(x),
\end{equation}
where $G_{i-1}$ is the Green function of $\Delta-\mathcal{W}_{i-1}$, with pole at some fixed point of $\Omega$. We do not know if the two estimates \eqref{Green1} and \eqref{Green2} hold. In Section 3 and 4, we will prove respectively Theorem \ref{convex_domain_spec} and \ref{minimal_spec}, {\em without having to prove gradient estimates for Green functions}. This is an important point, since in general finding the asymptotic at infinity of $|\nabla \log G|$ for a Green function $G$ is a difficult task, that can be achieved only in some particular cases. Instead, in our general approach that will lead to the proof of Theorems \ref{convex_domain_spec} and \ref{minimal_spec}, we will not have to estimate the gradient of a Green function. This general approach is based on the results of the next subsection.

\subsection{Another spectral result}
We first introduce some definitions and notations. Denote by $\mathcal{C}$ the set of positive measures on $[0,\infty)$ which are absolutely continuous with respect to the Lebesgue measure, and have infinite mass. For $\chi\in\mathcal{C}$ and $r\geq0$, denote

$$V(r):=\int_0^r \mathrm{d}\chi$$
and

$$S(r):=\int_r^{r+1}\mathrm{d}\chi.$$
Also, define $\sigma$, the \textit{exponential rate of volume growth} of $\chi$ by

$$\sigma(\chi):=\lim_{r\to\infty}\sup \frac{1}{r}\log V(r).$$
We have the following elementary technical Lemma:

\begin{Lem}\label{expo_growth}

If $\sigma(\chi)=0$, then for every $a>0$, $\varepsilon>0$ and $d>0$, there exists $b>a$ such that $|b-a|\geq d$ and
 
$$\frac{S(b)}{V(b)-V(a)}<\varepsilon.$$
Conversely, if for every $\varepsilon>0$, there exists $a>0$ and $d>0$ such that for every $b>a+d$, the inequality

$$\frac{S(b)}{V(b)-V(a)}<\varepsilon$$
holds, then $\sigma=0$.

\end{Lem}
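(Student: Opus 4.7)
My plan is to translate both implications into a single recursive inequality for $V$ (discrete, with step $1$) and then iterate.

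For the forward direction, I would argue by contradiction. Assume $\sigma(\chi)=0$ but the conclusion fails: there exist $a,\varepsilon,d>0$ with $S(b)\ge \varepsilon(V(b)-V(a))$ for every $b$ with $b-a\ge d$. Writing $S(b)=V(b+1)-V(b)$ and rearranging gives the recursion
\[
V(b+1)-V(a)\;\ge\;(1+\varepsilon)\bigl(V(b)-V(a)\bigr),\qquad b\ge a+d.
\]
Since $\chi$ has infinite total mass, $V(r)\to\infty$, so I can pick some $b_0\ge a+d$ with $V(b_0)>V(a)$. Iterating the recursion along the integer translates $b_0,b_0+1,\dots,b_0+n$ yields
\[
V(b_0+n)-V(a)\;\ge\;(1+\varepsilon)^n\bigl(V(b_0)-V(a)\bigr),
\]
so $\tfrac{1}{r}\log V(r)\ge \log(1+\varepsilon)+o(1)$ along $r=b_0+n$, which forces $\sigma(\chi)\ge\log(1+\varepsilon)>0$, contradicting $\sigma(\chi)=0$.

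For the converse, I fix $\varepsilon>0$ and take $a,d>0$ from the hypothesis. The inequality $S(b)<\varepsilon(V(b)-V(a))$ valid for all $b>a+d$ gives the reverse recursion $V(b+1)-V(a)<(1+\varepsilon)(V(b)-V(a))$. Choosing any $b_1>a+d$ and iterating, I get $V(b_1+n)-V(a)<(1+\varepsilon)^n(V(b_1)-V(a))$, and using monotonicity of $V$ to bridge non-integer arguments I obtain a bound $V(r)\le C_{a,b_1}(1+\varepsilon)^r$ for all $r$ large. Taking $\log$, dividing by $r$, and letting $r\to\infty$ yields $\sigma(\chi)\le \log(1+\varepsilon)$. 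Since $\varepsilon>0$ is arbitrary, $\sigma(\chi)\le 0$; combined with the trivial bound $\sigma(\chi)\ge 0$ (valid because $V(r)\to\infty$ so $\log V(r)\ge 0$ eventually), this gives $\sigma(\chi)=0$.

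There is no serious obstacle here; the whole argument is the one-line observation that $S(b)/(V(b)-V(a))\gtrless\varepsilon$ is a Gr\"onwall-type inequality with step $1$, whose iterate forces (or forbids) geometric growth of $V$. The only mild subtlety, in the forward direction, is the possibility that $V(b)=V(a)$ for some stretch past $a+d$ (a gap in $\mathrm{supp}\,\chi$), which makes the iteration start from $0$; this is handled by invoking infinite total mass of $\chi$ to push $b_0$ far enough right that $V(b_0)>V(a)$, after which the geometric iteration takes over.
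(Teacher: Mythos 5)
Your proof is correct and follows essentially the same route as the paper: both directions reduce to the observation that the inequality $S(b)/(V(b)-V(a))\gtrless\varepsilon$ is a geometric recursion for $V(b)-V(a)$ in integer steps, whose iterate forces (or rules out) exponential growth. The paper phrases the iteration as a comparison with $g(r)=e^{\nu r}$ where $e^{\nu}-1\lessgtr\varepsilon$, while you simply iterate $(1+\varepsilon)^n$ directly and then let $\varepsilon\to0$; these are the same argument, and your explicit handling of the initialization (choosing $b_0$ with $V(b_0)>V(a)$, which is available since $\chi$ has infinite mass) fills a minor gap the paper leaves implicit.
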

\begin{proof}
The proof is elementary, but we provide it for the sake of completeness. Assume that $\sigma=0$. We proceed by contradiction: define $f(r):=V(r)-V(a)$, and assume that there is $\varepsilon>0$ and $R$ big enough such that for every $r\geq R$,

$$\frac{f(r+1)-f(r)}{f(r)}>\varepsilon.$$
Define $g(r):=e^{\nu r}$, with $\nu>0$ chosen so that $e^{\nu}-1<\varepsilon$, then

$$\frac{g(r+1)-g(r)}{g(r)}=e^{\nu}-1<\varepsilon<\frac{f(r+1)-f(r)}{f(r)}.$$
From this, we deduce at once that there is a constant $C$ such that for every $r>0$,

$$f(r)\geq C g(r).$$
But this implies that

$$\sigma \geq \nu,$$
which is impossible.\\
Conversely, if $\varepsilon>0$ is fixed and for every $r>R$,

$$\frac{f(r+1)-f(r)}{f(r)}<\varepsilon,$$
then proceeding as above, by comparison with $g(r):=e^{\nu r}$ with $\nu$ chosen so that $e^{\nu}-1>\varepsilon$, we find that

$$f(r)\leq C e^{\nu r},$$
which yields that $\sigma\leq \nu$. Letting $\varepsilon\to0$, we can let $\nu\to0$ and conclude that $\sigma=0$.

\end{proof}

\begin{Def}
{\em
 A positive measure $\chi\in \mathcal{C}$ is said to have \textit{subexponential volume growth} if it satisfies $\sigma(\chi)=0$. In other words, $\chi$ has subexponential volume growth iff $V(r)=e^{o(r)}$.
}
\end{Def}
We will consider push-forward measures, of which we recall the definition:

\begin{Def}
{\em

If $X$ and $Y$ are measured spaces, $f:X\to Y$ is measurable and $\mu$ is a measure on $X$, then the \textit{push-forward} of $\mu$ by $f$ is the measure $f_\star \mu$, defined so that for every measurable set $B$ of $Y$,

$$(f_\star \mu)(B)=\mu(f^{-1}(B)),$$
where by definition $f^{-1}(B)$ is the subset of $X$ defined by 

$$f^{-1}(B)=\{x\in X \,:\,f(x)\in B\}.$$
Then, for every $g : Y\to\R$ measurable, we have the \textit{change of variable formula}
\begin{equation}\label{ch_variable}
\int_Y g\,\mathrm{d}(f_\star \mu)=\int_X g\circ f \mathrm{d}\mu.
\end{equation}
}
\end{Def}
After these preliminaries, let us now turn to our first general spectral result. Let $L$ be a symmetric, second-order elliptic operator of the form \eqref{div_P}. We assume that $L\mathbf 1=0$ (that is, $c=0$), then by Allegretto-Piepenbrink theory (see \cite{Ag4}, or Lemma 3.10 in \cite{PRS}), $L$ extends to a self-adjoint operator on $L^2(\Omega,\mathrm{d}\nu)$, whose spectrum is included in $[0,\infty)$. In the following Proposition, we give conditions guaranteeing that the spectrum of $L$ is the whole $[0,\infty)$. 

\begin{Pro}\label{spectrum}
Assume that (outside a compact set) there exists a function $v\in C^{1,\alpha}_{loc}$, $\alpha\in (0,1)$ such that, for some positive constant $C$,

$$\lim_{x\to\infty}|\nabla v(x)|_A\to C,$$
and such that

\begin{equation}\label{H1}
\lim_{x\to \infty}v(x)=+\infty
\end{equation}
and
\begin{equation}\label{H2}
\lim_{x\to\infty}Lv(x)=0\,\,\hbox{ (pointwise)}.
\end{equation}
Assume also that the push-forward measure $\chi:=v_\star \nu$ is in $\mathcal{C}$ and has subexponential volume growth. Then the spectrum of $L$ is $[0,\infty)$: more precisely, for every $\eta\geq 0$, we can construct a Weyl sequence whose support goes to infinity, i.e. a sequence $(\varphi_n)_{n\in \mathbb{N}}$ of smooth, compactly supported functions such that 

$$\lim_{n\to\infty}\frac{||(L-\eta)\varphi_n||_2}{||\varphi_n||_2}=0$$
and for every compact $K\subset \Omega$, there is an $N$ such that the support of $\varphi_n$ is in $\Omega\setminus K$ for every $n\geq N$.

\end{Pro}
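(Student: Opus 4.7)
For each $\eta\ge 0$ the plan is to build a Weyl sequence of truncated plane waves in the $v$-variable. Set $\omega:=\sqrt{\eta}/C$ and take as ansatz $\varphi=f(v)$ with $f(t)=e^{i\omega t}\chi(t)$, for a cutoff $\chi$ to be chosen. Since $L$ has vanishing zeroth-order part, the chain rule gives $L(f(v))=-f''(v)|\nabla v|_A^2+f'(v)Lv$; using $C^2\omega^2=\eta$ to cancel the leading oscillatory term, a direct expansion yields
\[
(L-\eta)\varphi \;=\; e^{i\omega v}\Bigl[\chi(v)\bigl(\omega^2(|\nabla v|_A^2-C^2)+i\omega\,Lv\bigr)+\chi'(v)\bigl(Lv-2i\omega|\nabla v|_A^2\bigr)-\chi''(v)|\nabla v|_A^2\Bigr].
\]
On $\{v\ge a\}$ the first bracket is $O(\epsilon(a))$, where $\epsilon(a):=\sup_{v\ge a}(|C^2-|\nabla v|_A^2|+|Lv|)\to 0$ by hypothesis (since $\{v\le R\}$ has compact closure, the pointwise limits $|\nabla v|_A\to C$ and $Lv\to 0$ at infinity translate into \emph{uniform} control on $\{v\ge a\}$ for large $a$), while the $\chi'$ and $\chi''$ terms are pointwise bounded and concentrated on $\supp\chi'\cup\supp\chi''$.

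\textbf{Reduction to a one-dimensional estimate.} Applying the change-of-variables formula \eqref{ch_variable} in each integral, the $L^2(\nu)$ norms become $L^2(\chi)$ norms:
\[
\|\varphi\|_{L^2(\nu)}^2=\int|\chi|^2\,d\chi,\qquad \|(L-\eta)\varphi\|_{L^2(\nu)}^2 \le C_1\epsilon(a)^2\int|\chi|^2\,d\chi+C_2\int\bigl(|\chi'|^2+|\chi''|^2\bigr)\,d\chi.
\]
Taking $\chi_n\in C^\infty_c$ supported in $[a_n,b_n]$, equal to $1$ on $[a_n+1,b_n-1]$, with $|\chi_n'|,|\chi_n''|\le C_0$, the derivative terms are localized in two unit transition intervals so that
\[
\int\bigl(|\chi_n'|^2+|\chi_n''|^2\bigr)\,d\chi \le 2C_0^2\bigl(S(a_n)+S(b_n-1)\bigr),\qquad \int|\chi_n|^2\,d\chi \ge V(b_n-1)-V(a_n+1).
\]

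\textbf{Use of subexponential growth.} I need $a_n\to\infty$ (forcing $\epsilon(a_n)\to 0$ and pushing $\supp\varphi_n\subset\{v\ge a_n\}$ out of every compact $K\subset\Omega$) together with
\[
\frac{S(a_n)+S(b_n-1)}{V(b_n-1)-V(a_n+1)} \longrightarrow 0.
\]
The hypothesis $\sigma(\chi)=0$ provides exactly this: it forces $\liminf_{r\to\infty}S(r)/V(r)=0$, since otherwise $V(r+1)\ge(1+\delta)V(r)$ eventually, contradicting subexponential growth. A two-step selection then does the job: first pick $a_n\to\infty$ along a subsequence on which $S(a_n)/V(a_n)\to 0$; then, with $a_n$ fixed, apply Lemma \ref{expo_growth} to produce $b_n>a_n+n$ with $V(b_n)\ge n(1+S(a_n))$ and $S(b_n)/(V(b_n)-V(a_n))<1/n$. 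Both terms of the displayed ratio then tend to zero.

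\textbf{Conclusion and main obstacle.} The preceding estimates yield $\|(L-\eta)\varphi_n\|_{L^2(\nu)}/\|\varphi_n\|_{L^2(\nu)}\to 0$ with supports drifting to infinity, so $\eta\in\sigma_{\mathrm{ess}}(L)$. Two loose ends: since $v$ is only $C^{1,\alpha}$, Friedrichs mollification at a scale much smaller than the cutoff width promotes the $\varphi_n$ to genuinely $C^\infty_c$ functions with the same asymptotics; and taking the real or imaginary part at each $n$ (whichever retains at least half the squared $L^2$ norm) yields a real-valued sequence, using that $L$ is a real operator. Combined with $L\ge 0$ (from $L\mathbf 1=0$ and Allegretto--Piepenbrink theory), this gives $\sigma(L)=[0,\infty)$. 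The principal technical point is the coordinated choice of the two endpoints $a_n,b_n$: Lemma \ref{expo_growth} as stated controls only one endpoint at a time with the other fixed, so the subsidiary consequence $\liminf_{r\to\infty}S(r)/V(r)=0$ of $\sigma=0$ is what allows \emph{both} endpoints to escape to infinity while keeping the boundary-to-bulk mass ratio small.
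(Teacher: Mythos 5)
Your proof is correct and follows essentially the same route as the paper: both build Weyl functions of the form $e^{i\sqrt{\eta}\,v/C}\,\psi_n(v)$, reduce the $L^2(\nu)$ estimates to one-dimensional integrals against the push-forward measure $\chi$ via the change-of-variable formula, use the pointwise convergence of $Lv$ and $|\nabla v|_A$ (together with $v\to\infty$) to make the drift terms small on $\{v\ge a_n\}$, and invoke Lemma~\ref{expo_growth} to make the boundary-to-bulk ratio vanish. The one place where you depart is the handling of the inner endpoint $a_n$: you argue via $\liminf_{r\to\infty} S(r)/V(r)=0$ and restrict $a_n$ to a subsequence. This is sound but unnecessary. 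Lemma~\ref{expo_growth} already supplies a free parameter $d$, and since $\chi$ has infinite mass, $V(a+d)-V(a)\to\infty$ as $d\to\infty$; so for any prescribed $a_n$ (chosen only to make $|Lv|$ and $||\nabla v|_A-C|$ small on $\{v\ge a_n\}$), one may pick $d$ large enough that $V(a_n+1+d)-V(a_n+1)\ge 2n\,S(a_n)$, and the $b_n$ produced by the lemma then controls \emph{both} contributions to $\frac{S(a_n)+S(b_n-1)}{V(b_n-1)-V(a_n+1)}$ at once. Your remarks on mollification (since $v$ is only $C^{1,\alpha}$) and on passing to real-valued functions are legitimate points the paper leaves implicit; the second is not strictly required, as Weyl sequences may be complex-valued.
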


\begin{Rem}
{\em
\begin{enumerate}
\item The infinity is the ideal point in the one-point compactification of $\Omega$.

\item The condition that $v_\star \nu$ be in $\mathcal{C}$ is quite weak: for example, if $\nu$ is the Lebesgue measure on $\R^n$, then by the co-area formula,  $v_\star \nu\in \mathcal{C}$ if and only if the function $f(t):=\mathrm{Vol}_{\mathcal{H}^{n-1}}(\{v=t\})$ is locally $L^1$ and $\int^\infty f(t)\mathrm{d}t=\infty$. Here, $\mathrm{Vol}_{\mathcal{H}^{n-1}}$ is the volume with respect to $\mathcal{H}^{n-1}$, the $(n-1)-$dimensional Hausdorff measure.

\item As follows from the proof, Proposition \ref{spectrum} holds under the weaker condition on $\chi$ that for every $a>0$, $\varepsilon>0$ and $d>0$, there exists $b>a$ such that $|b-a|\geq d$ and
 
$$\frac{S(b)}{V(b)-V(a)}<\varepsilon.$$
This is indeed (slightly) weaker than subexponential volume growth, by Lemma \ref{expo_growth}.

\end{enumerate}
}
\end{Rem}
The idea behind Proposition \ref{spectrum} is that $L$, restricted to the set of ``radial'' functions, that is functions of the form $f(v)$, has spectrum $[0,\infty)$. Thus it can be thought of in a way as a result about a ``one-dimensional operator''. In fact, for every $\eta\geq0$, the function $e^{i\sqrt{\eta}}$ is an approximate solution at infinity of $(L-\eta)u=0$, and the Weil sequence will be constructed as a sequence of functions that approwimate $e^{i\sqrt{\eta}}$. Let us turn to the detailed proof of Proposition \ref{spectrum}:
\begin{proof} 
Without loss of generality, we can assume that $|\nabla v(x)|_A\to1$ when $x\to \infty$.  We will use the fact that since $L\mathbf1=0$, $L$ satisfies the two formulae (in the weak sense):

\begin{equation}\label{composition}
L(f(u))=f'(u)Lu-f''(u)|\nabla u|_A^2
\end{equation}
and if $g$ and $h$ are $C^{1,\alpha}_{loc}(\Omega)$,

\begin{equation}\label{multiplication}
L(gh)=gL(h)+hL(g)-2 A\nabla g\cdot\nabla h.
\end{equation}
Fix $\eta\geq 0$, and let $\mu:=\sqrt{\eta}.$ Define $\varphi=e^{i\mu v}$. Then, using formula \eqref{composition}, we get that in the weak sense,

$$L\varphi=i\mu  (Lv)\varphi+\mu^2|\nabla v|_A^2 \varphi,$$
that is

$$(L-\eta)\varphi=i\mu (Lv)\varphi+\eta(|\nabla v|_A^2-1)\varphi.$$
We want to define $\varphi_n:=\psi_n(v)\varphi$, where $\psi_n(v)$ is going to play the role of a ``radial'' cut-off function. We first compute (using fomulae \eqref{composition} and \eqref{multiplication}) that in the weak sense,

\begin{equation}\label{smooth_case}
\begin{array}{rcl}(L-\eta)\varphi_n&=&\psi_n(v)\left[(L-\eta)\varphi\right]+\varphi (L\psi_n(v))-2\langle A\nabla \varphi,\nabla \psi_n(v)\rangle\\\\
&=&i\mu\varphi_n(Lv)+\eta(|\nabla v|_A^2-1)\varphi_n\\\\
&&+\varphi\left(\psi_n'(v)(Lv)-\psi_n''(v)|\nabla v|_A^2\right)-2i\mu\psi_n'(v)\varphi|\nabla v|_A^2
\end{array}
\end{equation}
We now define the real function $\psi_n$: we take $\psi_n(t)$ equal to $1$ if $t\in[a_n+1,b_n-1]$, $0$ if $t\notin [a_n,b_n]$ -- $a_n$ and $b_n$ to be chosen later --, and such that there is a constant $C$ independent of $n$ satisfying

$$|\psi_n'|+|\psi''_n|\leq C.$$
We are now ready to estimate each of the terms appearing in the computation of $(L-\eta)\varphi_n$: we first have, using the property of change of variable formula \eqref{ch_variable} of the push-forward measure,

$$||\varphi_n||_2^2=||\psi_n(v)\varphi||^2_2= \int \psi_n^2(t)\mathrm{d}\chi(t)\geq\int_{a_n+1}^{b_n-1}\mathrm{d}\chi(t).$$
Moreover, using that $Lv$ and $|\nabla v|_A$ are bounded, and the fact that $\psi'_n$ and $\psi''_n$ are supported in the union of the intervals $[a_n,a_n+1]\cup[b_n-1,b_n]$, we get (using again the change of variable formula \eqref{ch_variable})

$$||\varphi\left(\psi_n'(v)(Lv)-\psi_n''(v)|\nabla v|_A^2\right)-2i\mu \psi_n'(v)|\nabla v|_A^2\varphi||_2\leq C\int_{[a_n,a_n+1]\cup[b_n-1,b_n]}\mathrm{d}\chi(t).$$
Furthermore, since $|\nabla v|_A^2\to 1$ at infinity, we have

$$||\eta(|\nabla v|_A^2-1)\psi_n(v)\varphi||_2^2=o\left(\int \psi_n^2(t)\mathrm{d}\chi(t)\right)\hbox{ when }n\to\infty.$$ 
We now choose the sequences $(a_n,b_n)_{n\in\mathbb{N}}$ inductively: suppose that $(a_{n-1},b_{n-1})$ is defined, then take $a_n>\min(b_{n-1},n)$ such that

$$|Lv|\leq \frac{1}{n}$$
on the set $\{v\geq a_n\}$ (here we use hypotheses \eqref{H1} and \eqref{H2}), and take $b_n$ big enough such that

$$\frac{\int_{[a_n,a_n+1]\cup[b_n-1,b_n]}\mathrm{d}\chi(t)}{\int_{a_n+1}^{b_n-1}\mathrm{d}\chi(t)}\leq \frac{1}{n}.$$
This is possible by Lemma \ref{expo_growth}, since $\chi$ has subexponential volume growth. Collecting all the estimates, we get that

$$\lim_{n\to\infty}\frac{||(L-\eta)\varphi_n||_2}{||\varphi_n||_2}=0,$$
which concludes the proof.

\end{proof}

\subsection{Spectral result for Hardy inequalities}
We now apply Proposition \ref{spectrum} to study Hardy inequalities. We consider a symmetric operator $P$ on $L^2(\Omega,\mathrm{d}\nu)$ of the form \eqref{div_P}. We take $u_0$, $u_1$ positive functions on $\Omega$ which, for some $\alpha\in(0,1)$ and $K\subset \subset \Omega$ compact subset of $\Omega$, are $C^{1,\alpha}_{loc}(\Omega\setminus K)$. Denote

$$V_i=\frac{Pu_i}{u_i}.$$
Recall that $X_1$ is defined by

$$X_1(t):=\left(1-\log(t)\right)^{-1},$$
and consider the non-negative weight

$$W(u_0,u_1):=\frac{1}{4}\left|\nabla \log\left(\frac{u_0}{u_1}\right)\right|_A^2=\frac{1}{4}\left|\nabla X_1^{-1}\left(\frac{u_0}{u_1}\right)\right|_A^2$$
We emphasize that here and everywhere in the paper, $X_1^{-1}$ is a notation for $\frac{1}{X_1}$ and \textit{not} for the inverse of $X_1$. We know from the \textit{supersolution construction} of \cite{DFP} that in the case where $u_0$, $u_1$ are solutions of $P$, then $u_{1/2}:=\sqrt{u_0u_1}$ and $u_{1/2}X_1^{-1}\left(\frac{u_0}{u_1}\right)$ are solutions of $P-W(u_0,u_1)$. Furthermore, if $u_0$ and $u_1$ are only supersolutions of $P$, then $u_{1/2}$ is supersolution of $P-W(u_0,u_1)$. In this subsection, we will be interested in the case where $u_0$ and $u_1$ are \textit{approximate solutions} of $P$. In the rest of this subsection, we will denote $W(u_0,u_1)$ by $W$. We will need the following general computational lemma (see \cite{DFP} for the proof of the first equality):

\begin{Lem}\label{supconstruct}
The following equalities hold, in the weak sense:

$$\left(P-\frac{1}{2}(V_0+V_1)-W\right)u_{1/2}=0,$$
and


$$\left(P-\frac{1}{2}(V_0+V_1)+(V_0-V_1)X_1\left(\frac{u_0}{u_1}\right)-W\right)u_{1/2}X_1^{-1}\left(\frac{u_0}{u_1}\right)=0.$$

\end{Lem}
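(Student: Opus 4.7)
The first identity is given in \cite{DFP}, so I focus on the second. Set $g := u_{1/2} = \sqrt{u_0 u_1}$ and $\varphi := X_1^{-1}(u_0/u_1) = 1 - \log(u_0/u_1)$. By definition $X_1(u_0/u_1)\cdot \varphi = X_1(u_0/u_1) \cdot X_1^{-1}(u_0/u_1) = 1$, so the potential contribution $(V_0-V_1)X_1(u_0/u_1)\cdot g\varphi$ collapses to $(V_0-V_1)g$, and the claim becomes
\begin{equation*}
\bigl(P - \tfrac{1}{2}(V_0+V_1) - W\bigr)(g\varphi) = (V_1 - V_0)\,g.
\end{equation*}

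My plan is to expand the left-hand side via the Leibniz rule, which for $P = -\div(A\nabla \cdot) + c$ takes the form
\begin{equation*}
P(g\varphi) = g P\varphi + \varphi Pg - 2\langle A\nabla g, \nabla\varphi\rangle - cg\varphi,
\end{equation*}
and then to invoke the first identity to eliminate $\varphi\bigl[Pg - (\tfrac12(V_0+V_1)+W)g\bigr] = 0$. Writing $\tilde Pu := -\div(A\nabla u) = Pu - cu$, what remains to verify is
\begin{equation*}
g\,\tilde P\varphi - 2\langle A\nabla g, \nabla\varphi\rangle = (V_1 - V_0)\,g.
\end{equation*}

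The final step is direct computation. Using $\tilde P(\log u_i) = \tilde P u_i / u_i + |\nabla\log u_i|_A^2 = (V_i - c) + |\nabla\log u_i|_A^2$ and linearity of $\tilde P$ gives $\tilde P\varphi = (V_1 - V_0) + |\nabla\log u_1|_A^2 - |\nabla\log u_0|_A^2$. On the other hand, $\nabla g = \tfrac{g}{2}(\nabla\log u_0 + \nabla\log u_1)$ and $\nabla\varphi = \nabla\log u_1 - \nabla\log u_0$; the symmetry of $A$ makes the mixed terms $\langle A\nabla\log u_0,\nabla\log u_1\rangle - \langle A\nabla\log u_1,\nabla\log u_0\rangle$ cancel, so that $2\langle A\nabla g,\nabla\varphi\rangle = g\bigl(|\nabla\log u_1|_A^2 - |\nabla\log u_0|_A^2\bigr)$. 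Subtracting cancels the gradient contributions and leaves exactly $(V_1-V_0)g$, as required. The argument is routine bookkeeping; the only real traps are the sign of the zero-order term in the Leibniz rule ($-cg\varphi$, not $+cg\varphi$) and the use of the symmetry of $A$ to annihilate the mixed gradient terms. All manipulations are justified in the weak sense by the assumed $C^{1,\alpha}_{\mathrm{loc}}$ regularity of $u_0$ and $u_1$.
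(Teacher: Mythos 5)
Your proof is correct. The paper itself does not prove this lemma—it merely cites \cite{DFP} for the first identity and leaves the second unjustified—so there is no authoritative proof to compare against; your derivation, which takes the first identity as given and applies the Leibniz-type expansion of $P(g\varphi)$ followed by the direct computation of $\tilde P\varphi$ and the cancellation of the mixed gradient terms via symmetry of $A$, is the natural and complete verification, and all the signs and cancellations check out.
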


For the rest of this section, we assume that $W$ is \textit{positive} in a neighborhood of infinity in $\Omega$: more precisely, we will assume that $W>0$ on $\Omega\setminus K$. From the assumption that $u_0$ and $u_1$ belong to $C^{1,\alpha}(\Omega\setminus K)$, it follows that $W$ and $\frac{1}{W}$ are continuous, and in particular locally bounded, on $\Omega\setminus K$. In order to study the spectral properties of $\frac{1}{W}P$, we perform simultaneously a \textit{h-transform} and a \textit{change of measure}: we consider the operator

$$L:=u_{1/2}^{-1}\left(W^{-1}P-1\right)u_{1/2},$$
which is symmetric on $L^2(\Omega,u_{1/2}^2W\mathrm{d}\nu)$ and unitarily equivalent to $\left(W^{-1}P-1\right)$. We compute from the formulae \eqref{h-transform} and \eqref{change_measure} that

$$L=-\div\left(\frac{A}{W}\nabla\cdot\right)+\frac{\left(W^{-1}P-1\right)u_{1/2}}{u_{1/2}},$$
where the divergence is for the measure $u_{1/2}^2W\mathrm{d}\nu$. Let us denote by $V$ the potential 

$$V:=\frac{\left(W^{-1}P-1\right)u_{1/2}}{u_{1/2}}=\frac{1}{2W}(V_0+V_1),$$
(we have used here the first equality in Lemma \ref{supconstruct}), and by $\tilde{L}$ the symmetric operator

$$\tilde{L}:=-\div\left(\frac{A}{W}\nabla\cdot\right)$$
acting on $L^2(\Omega,u_{1/2}^2W\mathrm{d}\nu)$, so that

$$L=\tilde{L}+V.$$

We have then the following consequence of Proposition \ref{spectrum}:

\begin{Pro}\label{spectrum2}
Assume that the following conditions are satisfied:

\begin{enumerate}

\item $$\lim_{x\to\infty}\frac{u_0(x)}{u_1(x)}=0,$$

\item $$\lim_{x\to\infty}\frac{1}{2W}(V_0+V_1)X_1^{-1}\left(\frac{u_0(x)}{u_1(x)}\right)=0,$$

\item $$\lim_{x\to\infty}\frac{V_0-V_1}{W}=0,$$

\item The push-forward measure $\left(X_1^{-1}\left(\frac{u_0}{u_1}\right)\right)_\star\left(u_0u_1W\mathrm{d}\nu\right)$ is in $\mathcal{C}$ and has subexponential volume growth.

\end{enumerate}
Then the essential spectrum of $W^{-1}P$ on $L^2(\Omega,W\mathrm{d}\nu)$ is $[1,+\infty)$. 

\end{Pro}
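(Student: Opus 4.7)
The plan is to reduce the statement to an application of Proposition \ref{spectrum} on the operator $\tilde{L}$. The setup preceding the statement shows that $W^{-1}P - 1$ is unitarily equivalent, via multiplication by $u_{1/2}$, to $L = \tilde{L} + V$ acting on $L^2(\Omega, u_{1/2}^2 W\dnu)$, so $\sigma_{ess}(W^{-1}P) = 1 + \sigma_{ess}(L)$ and it suffices to prove $\sigma_{ess}(L) = [0,\infty)$. I would apply Proposition \ref{spectrum} to $\tilde{L}$ (which is in the required divergence form and satisfies $\tilde{L}\mathbf{1} = 0$) with the single choice
$$
v := X_1^{-1}(u_0/u_1) = 1 - \log(u_0/u_1).
$$

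The four hypotheses of Proposition \ref{spectrum} then translate cleanly into the four conditions in the statement: $v \to +\infty$ at infinity is exactly condition (1); $|\nabla v|_{A/W}$ is a positive constant because $|\nabla v|_{A/W}^2 = W^{-1}|\nabla v|_A^2 = W^{-1}\cdot 4W = 4$ identically; the subexponential volume growth of $v_\star(u_{1/2}^2 W\dnu)$ is condition (4); and, using the identity $u_{1/2}^2 \nabla v = u_0 \nabla u_1 - u_1 \nabla u_0$ together with $-\div(A\nabla u_i) = (V_i - c)u_i$, a short integration by parts in the definition of $\tilde{L}$ (where the divergence is taken with respect to $u_{1/2}^2 W\dnu$) yields
$$
\tilde{L} v \;=\; -\frac{1}{u_{1/2}^2 W}\,\div\bigl(A(u_0\nabla u_1 - u_1\nabla u_0)\bigr) \;=\; \frac{V_1 - V_0}{W},
$$
which tends to $0$ by condition (3). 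Proposition \ref{spectrum} then provides, for every $\eta \geq 0$, a Weyl sequence $(\varphi_n)$ for $\tilde{L}$ at $\eta$ whose supports satisfy $\supp(\varphi_n) \subseteq \{a_n \leq v \leq b_n\}$ with $a_n \to \infty$.

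To conclude $[0,\infty) \subseteq \sigma_{ess}(L)$, I would show that the same $(\varphi_n)$ is a Weyl sequence for $L = \tilde{L} + V$. Condition (2) states that $V \cdot v = \tfrac{V_0 + V_1}{2W} X_1^{-1}(u_0/u_1) \to 0$ at infinity, so on $\supp(\varphi_n)$ one has $|V| \leq \varepsilon/v \leq \varepsilon/a_n$ for any fixed $\varepsilon > 0$ and $n$ large enough; therefore $\|V \varphi_n\| / \|\varphi_n\| \to 0$, and $(\varphi_n)$ is indeed a Weyl sequence for $L$ at $\eta$. For the opposite inclusion $\sigma_{ess}(L) \subseteq [0, \infty)$, I would use that $\tilde{L} \geq 0$ (its quadratic form reduces to $\int \langle A\nabla u, \nabla u\rangle u_{1/2}^2 \dnu$) together with the consequence $V \to 0$ of condition (2), which via Persson's formula yields $\langle L u, u\rangle \geq -\varepsilon \|u\|^2$ on $C_c^\infty(\Omega \setminus K_\varepsilon)$ for any $\varepsilon > 0$, and hence $\inf \sigma_{ess}(L) \geq 0$.

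The main technical point is arranging that the single choice $v = X_1^{-1}(u_0/u_1)$ simultaneously satisfies all four hypotheses of Proposition \ref{spectrum} applied to $\tilde{L}$: conditions (1), (3), (4) are exactly these hypotheses for $\tilde{L}$, while condition (2)---strictly stronger than the pointwise decay $V \to 0$ that suffices to push the Weyl sequence through the zeroth-order perturbation---is precisely what is needed to make $V$ negligible on the supports of the Weyl sequences produced by Proposition \ref{spectrum}.
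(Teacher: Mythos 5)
Your proposal is correct and follows essentially the same route as the paper's proof: reduce to the unitarily equivalent operator $L = \tilde L + V$, choose $v = X_1^{-1}(u_0/u_1)$, verify the four hypotheses of Proposition~\ref{spectrum} for $\tilde L$ (your direct integration-by-parts computation of $\tilde L v$ is a nice, self-contained alternative to invoking Lemma~\ref{supconstruct}, and in fact fixes an apparent sign typo there), transfer the resulting Weyl sequences to $L$ using $V\to 0$, and close the upper bound on $\inf\sigma_{ess}(L)$ via Persson's formula. The only cosmetic difference is that the paper phrases the last step through the positive-solution version of Persson's theorem while you use the equivalent quadratic-form version, which is marginally more direct.
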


\begin{Rem}\label{essential_perturb}

{\em 
\begin{enumerate}
\item It will be clear from the proof that if $\tilde{W}=W$ in a neighborhood of infinity of $\Omega$, then the essential spectrum of $\tilde{W}^{-1}P$ is also $[1,\infty)$.

\item Concerning the hypotheses made in Proposition \ref{spectrum2}: condition (1) expresses the fact that $u_0$ has ``minimal growth'' at infinity; conditions (2) and (3) express in a quantitative way that $u_0$ and $u_1$ are ``approximate solutions'' of $P$ at infinity; condition (4) is satisfied for optimal potentials obtained by the supersolution construction, i.e. if $u_0$ and $u_1$ are \textit{solutions} of $P$: indeed, in this case, $\left(X_1^{-1}\left(\frac{u_0}{u_1}\right)\right)_\star\left(u_0u_1W\mathrm{d}\nu\right)$ has \textit{linear} volume growth (see \cite{DFP}). Therefore, the hypotheses (1)--(4) can be considered to express in a quantitative way that $W$ is ``optimal at infinity''. We will discuss with greater details the relevance of condition (4) in Section \ref{Sec:Agmon_metric}.
\end{enumerate}
}
\end{Rem}
\begin{proof}
Since $W^{-1}P-1$ and $L$ are unitarily equivalent, it is enough to show that the essential spectrum of $L$ is $[0,\infty)$. By Lemma \ref{supconstruct}, we have 

$$LX_1^{-1}\left(\frac{u_0}{u_1}\right)=\frac{1}{2W}(V_0+V_1)X_1^{-1}\left(\frac{u_0}{u_1}\right)+\frac{V_0-V_1}{W}.$$
Define $v:=X_1^{-1}\left(\frac{u_0}{u_1}\right)$, which is $C^{1,\alpha}_{loc}(\Omega\setminus K)$ for some $K$ compact subset of $\Omega$. The hypotheses made imply that

$$\lim_{x\to\infty}v(x)=+\infty,$$

$$\lim_{x\to\infty}V(x)v(x)=0,$$
in particular,
$$\lim_{x\to\infty}V(x)=0,$$
and finally,

$$\lim_{x\to\infty}\tilde{L}v(x)=0.$$
Let us remark that by definition of $W$,

$$|\nabla v|_{A/W}^2=W^{-1}|\nabla v|_A^2=4.$$
We can apply Proposition \ref{spectrum} to $\tilde{L}$ (notice that the matrix $\frac{A}{W}$ appearing in the definition of $\tilde{L}$ is locally bounded in $\Omega\setminus K$): for every $\eta\geq0$, there is a Weyl sequence $(\varphi_n)_{n\in\mathbb{N}}$ for $\tilde{L}-\eta$, with the support of $\varphi_n$ going to infinity. Since $\lim_{x\to\infty}V(x)=0,$ we conclude that $(\varphi_n)_{n\in\mathbb{N}}$ is also a Weyl sequence for $L-\eta$. Hence the essential spectrum of $L$ contains $[0,\infty)$. For the inverse inclusion, it is enough to prove that $0$ is the infimum of the essential spectrum of $L$. By Persson's formula (see \cite{Persson} or \cite{Ag3}), $\lambda_\infty(L)$, the infimum of the essential spectrum of $L$, is given by

\begin{equation}\label{persson}
\lambda_\infty(L)=\sup\{\lambda : \exists K\subset\subset\Omega, \,\exists u_\lambda>0, \,\hbox{s.t. }(L-\lambda)u_\lambda=0\hbox{ on }\Omega\setminus K\}.
\end{equation}
Let $\varepsilon>0$. Since $V$ tends to $0$, we can find a compact set $K_0$ containing $K$ such that

$$|V|\leq \varepsilon\hbox{ on }\Omega\setminus K_0.$$
Since $\tilde{L}\mathbf{1}=0$, by Allegretto-Piepenbrink $\tilde{L}$ is nonnegative. Therefore, again by Allegretto-Piepenbrink one can find a positive function $u$, solution of

$$(L+\varepsilon)u=(\tilde{L}+V+\varepsilon)u=0\hbox{ on }\Omega\setminus{K_0}.$$
Persson's formula \eqref{persson} now implies that $\lambda_\infty(L)\geq-\varepsilon$. Letting $\varepsilon\to0$, we conclude that

$$\lambda_\infty(L)\geq0.$$

\end{proof}
We will be also interested in the part of the essential spectrum \textit{below} the essential spectrum. For this, we will use the following Lemma:

\begin{Lem}\label{Morse}

Let $\mathcal{L}$ be an operator of the form \eqref{div_P}. Assume that in a neighborhood of infinity there is a positive $C^{1,\alpha}_{loc}$ function $v$ such that when $x\to\infty$, there holds

$$|\nabla v(x)|_A\geq C>0$$
and

$$\left|\frac{\mathcal{L}v}{v}+\mathcal{L}\mathbf1\right|=o(v^{-2}).$$
Then the negative spectrum of $\mathcal{L}$ consists of (at most) a finite number of eigenvalues, each with finite multiplicity.

\end{Lem}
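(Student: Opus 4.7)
My plan is to construct an explicit positive supersolution of $\mathcal{L}$ in a neighborhood of infinity; both the containment of the essential spectrum in $[0,\infty)$ and the finiteness of the Morse index will then follow by standard Persson and Allegretto--Piepenbrink arguments, closely parallel to those used at the end of the proof of Proposition \ref{spectrum2}.

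The candidate I would try is $u:=\sqrt{v}$. A direct computation using $\mathcal{L}=-\div(A\nabla\cdot)+c$ together with the identity $\mathcal{L}\mathbf{1}=c$ yields, in the weak sense,
\begin{equation*}
\frac{\mathcal{L}(\sqrt{v})}{\sqrt{v}} \;=\; \frac{|\nabla v|_A^{2}}{4v^{2}} \;+\; \frac{1}{2}\left(\frac{\mathcal{L}v}{v}+\mathcal{L}\mathbf{1}\right).
\end{equation*}
By the two hypotheses the first summand is bounded below by $C^{2}/(4v^{2})$, while the second is $o(v^{-2})$ as $x\to\infty$; hence there exists a compact set $K\subset\subset\Omega$ such that $\mathcal{L}(\sqrt{v})>0$ on $\Omega\setminus K$, i.e.\ $\sqrt{v}$ is a positive classical supersolution of $\mathcal{L}$ outside $K$.

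By the Allegretto--Piepenbrink theory, the existence of this supersolution implies $\mathcal{L}\geq 0$ as a quadratic form on $C_{0}^{\infty}(\Omega\setminus K)$. Persson's formula then gives $\lambda_{\infty}(\mathcal{L})\geq 0$, so $\sigma_{\mathrm{ess}}(\mathcal{L})\subset[0,\infty)$, and every eigenvalue in $(-\infty,0)$ is therefore isolated and of finite multiplicity. To upgrade this to only \emph{finitely many} such eigenvalues, I would run an IMS-localization argument. Given any $N$ orthonormal eigenfunctions $\phi_{1},\ldots,\phi_{N}$ associated to negative eigenvalues, and a partition of unity $\chi_{1}^{2}+\chi_{2}^{2}=1$ with $\chi_{1}$ supported in a slightly larger compact $K'\supset K$ and $\chi_{2}$ supported in $\Omega\setminus K$, the IMS identity gives
\begin{equation*}
q(\phi_{i}) \;=\; q(\chi_{1}\phi_{i})+q(\chi_{2}\phi_{i})-\int\big(|\nabla\chi_{1}|_A^{2}+|\nabla\chi_{2}|_A^{2}\big)\phi_{i}^{2}.
\end{equation*}
Since $q(\chi_{2}\phi_{i})\geq 0$ by the previous step and $q(\phi_{i})<0$, one obtains a uniform lower bound on $\|\phi_{i}\|_{L^{2}(K')}$; combined with a uniform $H^{1}(K')$-bound on $\phi_{i}$ (coming from the boundedness of $q(\phi_{i})+M\|\phi_{i}\|^{2}$), Rellich compactness then forbids the existence of an infinite orthonormal family of such $\phi_{i}$.

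The only delicate step is the last one: pointwise non-negativity of $\mathcal{L}$ on functions supported far from $K$ does not by itself encode any finite-codimension information, so the compactness argument above (a form of Glazman's lemma) is needed. The computational heart of the lemma, however, is the choice $u=\sqrt{v}$: the factor $1/2$ arising from differentiating a square root is exactly what produces the combination $\mathcal{L}v/v+\mathcal{L}\mathbf{1}$, matching the assumption precisely.
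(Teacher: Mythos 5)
Your computational step is exactly the one in the paper: the identity
$$\frac{\mathcal{L}\sqrt{v}}{\sqrt{v}}=\frac{1}{2}\left(\frac{\mathcal{L}v}{v}+\mathcal{L}\mathbf 1\right)+\frac{1}{4}\frac{|\nabla v|_A^2}{v^2}$$
is precisely the first formula of Lemma \ref{supconstruct} with $u_0=v$, $u_1=\mathbf 1$, and the hypotheses then yield $\mathcal{L}\sqrt{v}\geq 0$ near infinity. The paper at that point simply invokes \cite{Baptiste}, where the equivalence between the existence of a positive supersolution outside a compact set and the finiteness of the negative spectrum (eigenvalues counted with multiplicity) is established. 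You instead attempt to reprove this final implication via an IMS localization, and that is where there is a gap.

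Concretely, from the IMS identity together with $q(\chi_2\phi_i)\geq 0$ and $q(\phi_i)<0$ you conclude $q(\chi_1\phi_i)<M\|\phi_i\|_{L^2(K')}^2$; but this does \emph{not} give a uniform lower bound on $\|\phi_i\|_{L^2(K')}$. Indeed, $q(\chi_1\phi_i)$ is only bounded below by $\mu_1\|\chi_1\phi_i\|^2$ (with $\mu_1$ the bottom of the Dirichlet spectrum on $K'$, possibly negative), so the inequality is vacuous as $\|\phi_i\|_{L^2(K')}\to 0$; the scenario where negative eigenvalues accumulate at $0$ with eigenfunctions whose mass escapes $K'$ is not excluded by what you have written. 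Closing the argument requires more than $q\geq 0$ on $\Omega\setminus K$: one needs to exploit the strict positivity $\mathcal{L}\sqrt{v}/\sqrt{v}\geq c_1>0$ on a fixed annulus near $\partial K$ (which the ground-state transform gives, since $v$ is locally bounded), or to invoke a min-max/Glazman-type argument with a finite-rank correction to absorb the localization error; either way, the precise reasoning is the content of \cite{Baptiste}. So the first, computational half of your proof is right and matches the paper; the second half replaces a citation by a sketch whose central step is not justified as stated, and cannot be made to work without an additional idea.
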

\begin{proof}
We apply Lemma \ref{supconstruct} to get, in a neighborhood of infinity,

$$\mathcal{L}\sqrt{v}=\left(\frac{1}{2}\left(\frac{\mathcal{L}v}{v}+\mathcal{L}\mathbf 1\right)+\frac{1}{4}\frac{|\nabla v|_A^2}{v^2}\right)\sqrt{v}.$$
Using the hypotheses, we see that in a neighborhood of infinity,

$$\mathcal{L}\sqrt{v}\geq0.$$
According to \cite{Baptiste}, the existence of a positive supersolution of $L$ outside a compact set of $\Omega$ is equivalent to the fact that the negative spectrum of $L$ consists of (at most) a finite number of eigenvalues, with finite multiplicity, hence the conclusion.

\end{proof}
We apply this in the framework of Proposition \ref{spectrum2}:

\begin{Pro}\label{Morse2}
Let $P$, $W$, $u_0$, $u_1$, $V_0$, $V_1$ be as defined above Proposition \ref{spectrum2}. Assume that when $x\to\infty$, 

\begin{enumerate}

\item $$\lim_{x\to\infty}\frac{1}{2W}(V_0+V_1)=o\left(X_1^2\left(\frac{u_0(x)}{u_1(x)}\right)\right),$$

\item $$\lim_{x\to\infty}\frac{V_0-V_1}{W}=o\left(X_1\left(\frac{u_0(x)}{u_1(x)}\right)\right),$$

\end{enumerate}
Then the spectrum of $W^{-1}P$ strictly below $1$ consists (at most) of a finite number of eigenvalues of finite multiplicity.

\end{Pro}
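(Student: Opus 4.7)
The plan is to apply Lemma~\ref{Morse} to the operator $L := u_{1/2}^{-1}(W^{-1}P - 1)u_{1/2}$ introduced just before Proposition~\ref{spectrum2}. Since $L$ is unitarily equivalent to $W^{-1}P - 1$, its negative spectrum (with multiplicity) coincides with the part of the spectrum of $W^{-1}P$ lying strictly below $1$. So it suffices to prove that $L$ has at most finitely many negative eigenvalues, each of finite multiplicity. Recall that $L = \tilde{L} + V$, with $V = \tfrac{1}{2W}(V_0 + V_1)$ and $\tilde{L}\mathbf{1} = 0$, hence $L\mathbf{1} = V$.

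The natural test function is the one that already drove the proof of Proposition~\ref{spectrum2}, namely $v := X_1^{-1}(u_0/u_1)$. By the very definition of $W = W(u_0,u_1)$ one has $|\nabla v|_{A/W}^{2} = W^{-1}|\nabla v|_{A}^{2} = 4$ in the neighborhood of infinity where $W > 0$, which supplies the uniform lower bound on the gradient required by Lemma~\ref{Morse} applied to $L$ (whose principal symbol is $A/W$, not $A$). In that region $v \to +\infty$, so $v^{-2}$ coincides with $X_1^{2}(u_0/u_1)$.

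Next I would use Lemma~\ref{supconstruct}, exactly in the form already invoked in the proof of Proposition~\ref{spectrum2}, to obtain
\[
Lv \;=\; V\,v \;+\; \frac{V_0 - V_1}{W},
\]
whence
\[
\frac{Lv}{v} + L\mathbf{1} \;=\; 2V \;+\; \frac{V_0 - V_1}{W\,v}.
\]
Hypothesis~(1) of the proposition reads $V = o\!\left(X_1^{2}(u_0/u_1)\right) = o(v^{-2})$, while hypothesis~(2) reads $\tfrac{V_0 - V_1}{W} = o\!\left(X_1(u_0/u_1)\right) = o(v^{-1})$, so dividing by $v$ gives $\tfrac{V_0 - V_1}{Wv} = o(v^{-2})$. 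Adding these yields
\[
\left|\tfrac{Lv}{v} + L\mathbf{1}\right| \;=\; o(v^{-2}),
\]
which is exactly the decay condition required in Lemma~\ref{Morse}. The lemma, applied to $L$ with test function $v$, then delivers the finiteness of the negative spectrum of $L$ with finite multiplicities, and unitary equivalence transports the conclusion to $W^{-1}P$ below $1$.

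I do not anticipate any real obstacle: the argument is essentially a mechanical invocation of Lemma~\ref{Morse} with the approximate eigenfunction $v = X_1^{-1}(u_0/u_1)$ that has already been set up. The only point demanding attention is bookkeeping of the principal symbol — after the $h$-transform and the change of measure that define $L$, the matrix entering Lemma~\ref{Morse} becomes $A/W$, which is exactly what makes $|\nabla v|_{A/W}$ (and not $|\nabla v|_A$) into a constant and closes the estimate.
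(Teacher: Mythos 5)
Your proposal is correct and reproduces exactly the paper's (one-line) argument: apply Lemma~\ref{Morse} to $\mathcal{L}=L=u_{1/2}^{-1}(W^{-1}P-1)u_{1/2}$ with the test function $v=X_1^{-1}(u_0/u_1)$, using $|\nabla v|_{A/W}^2=4$ for the gradient lower bound, and Lemma~\ref{supconstruct} together with hypotheses (1)–(2) to check that $|Lv/v+L\mathbf{1}|=o(v^{-2})$. You simply fill in the computations that the paper leaves implicit; there is no gap.
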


\begin{Rem}\label{finiteness_perturb}
{\em
It is clear from the proof that the same conclusion also holds for $\tilde{W}^{-1}P$, if $\tilde{W}=W$ in a neighborhood of infinity of $\Omega$.
}
\end{Rem}

\begin{proof}
It is a direct consequence of Lemma \ref{Morse}, applied with $\mathcal{L}=L=u_{1/2}^{-1}(\frac{1}{W}P-1)u_{1/2}$ and $v=X_1^{-1}\left(\frac{u_0(x)}{u_1(x)}\right)$.

\end{proof}
The results of this section lead us naturally to the concept of ``approximate solutions'', as announced in the introduction. We introduce the following definition:

\begin{Def}\label{approx_sol}
{\em 
Recall that $W(u_0,u_1):=\frac{1}{4}\left|\nabla \log\frac{u_0}{u_1}\right|_A^2$, and assume that $W(u_0,u_1)$ is positive in a neighborhood of infinity. We say that $(u_0,u_1)$ is a pair of {\em optimal approximate solutions} at infinity for $P$ if the following conditions are satisfied:

\begin{enumerate}

 \item $$\lim_{x\to\infty}\frac{u_0(x)}{u_1(x)}=0.$$

 \item When $x\to\infty$,
 $$\frac{1}{W(u_0,u_1)}\left(\frac{Pu_0}{u_0}+\frac{Pu_1}{u_1}\right)=o\left(X_1^2\left(\frac{u_0(x)}{u_1(x)}\right)\right).$$
 
 \item When $x\to\infty$,
 $$\frac{1}{W(u_0,u_1)}\left(\frac{Pu_0}{u_0}-\frac{Pu_1}{u_1}\right)=o\left(X_1\left(\frac{u_0(x)}{u_1(x)}\right)\right).$$
 
 \item The push-forward measure $\left(X_1^{-1}\left(\frac{u_0}{u_1}\right)\right)_\star\left(u_0u_1W\mathrm{d}\nu\right)$ is in $\mathcal{C}$ and has subexponential volume growth.

\end{enumerate}
}
\end{Def}

Summarizing the results of this section, we obtain one of the main results of this paper (as a consequence of Propositions \ref{spectrum2} and \ref{Morse2}):

\begin{Thm}\label{general_spec}

Let $P$ be a symmetric, second-order elliptic operator of the form \eqref{div_P}. Let $(u_0,u_1)$ be a pair of optimal approximate solutions at infinity for $P$. Recall that $W(u_0,u_1)=\frac{1}{4}\left|\nabla \log\frac{u_0}{u_1}\right|_A^2$. Then the essential spectrum of the operator $\frac{1}{W(u_0,u_1)}P$ on $L^2(\Omega,W(u_0,u_1)\nu)$ is equal to $[1,\infty)$, and furthermore the spectrum below $1$ consists at most of a finite number of eigenvalues with finite multiplicity.

\end{Thm}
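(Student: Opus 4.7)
The plan is to observe that Theorem \ref{general_spec} is essentially a packaging of Propositions \ref{spectrum2} and \ref{Morse2}, so the proof reduces to verifying that the four conditions in Definition \ref{approx_sol} imply, respectively, the four hypotheses of Proposition \ref{spectrum2} and both hypotheses of Proposition \ref{Morse2}. First I would recall that $V_i = Pu_i/u_i$ and that $X_1^{-1}$ denotes the reciprocal of $X_1$, so condition (2) in Definition \ref{approx_sol} reads $\frac{1}{W}(V_0+V_1) = o\!\left(X_1^2(u_0/u_1)\right)$ and condition (3) reads $\frac{V_0-V_1}{W} = o\!\left(X_1(u_0/u_1)\right)$.

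For Proposition \ref{spectrum2}, hypothesis (1) is identical to condition (1) of the definition, and hypothesis (4) is identical to condition (4). For hypothesis (3) of Proposition \ref{spectrum2}, note that since $u_0/u_1 \to 0$ at infinity by condition (1), we have $X_1(u_0/u_1) = (1-\log(u_0/u_1))^{-1} \to 0$; combined with condition (3) of the definition, this gives
\[
\frac{V_0 - V_1}{W} = o\!\left(X_1(u_0/u_1)\right) = o(1),
\]
which is exactly hypothesis (3) of Proposition \ref{spectrum2}. Similarly, for hypothesis (2) of Proposition \ref{spectrum2}, condition (2) of the definition together with $X_1^{-1}(u_0/u_1) = 1/X_1(u_0/u_1)$ gives
\[
\frac{1}{2W}(V_0+V_1) X_1^{-1}(u_0/u_1) = o\!\left(\frac{X_1^2(u_0/u_1)}{X_1(u_0/u_1)}\right) = o\!\left(X_1(u_0/u_1)\right) = o(1),
\]
as required. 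Applying Proposition \ref{spectrum2} yields that the essential spectrum of $W(u_0,u_1)^{-1}P$ is $[1,\infty)$.

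For the finiteness statement below $1$, the hypotheses of Proposition \ref{Morse2} are precisely conditions (2) and (3) of Definition \ref{approx_sol}, so Proposition \ref{Morse2} applies directly and shows that the spectrum of $W(u_0,u_1)^{-1}P$ in $(-\infty,1)$ consists of at most finitely many eigenvalues of finite multiplicity. Combining the two conclusions completes the proof.

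Since all the analytic content -- the construction of approximate generalized eigenfunctions of exponential type via Proposition \ref{spectrum}, the Persson-type argument for the lower bound on the essential spectrum, and the supersolution criterion for finiteness of the negative spectrum from \cite{Baptiste} -- has already been carried out in Propositions \ref{spectrum2} and \ref{Morse2}, there is no genuine obstacle here: the only task is the bookkeeping of exponents of $X_1$. The one place where one must be careful is to remember that $X_1(u_0/u_1) \to 0$ at infinity (rather than staying bounded away from $0$), so that a bound of the form $o(X_1)$ is in particular $o(1)$; this is what allows the stronger quantitative conditions of Definition \ref{approx_sol} to imply the weaker qualitative limits required by Proposition \ref{spectrum2}.
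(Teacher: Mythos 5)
Your proof is correct and follows exactly the route the paper intends: the theorem is stated explicitly in the paper ``as a consequence of Propositions \ref{spectrum2} and \ref{Morse2},'' and your contribution is to carry out the (implicit) bookkeeping that conditions (2) and (3) of Definition \ref{approx_sol}, which match the hypotheses of Proposition \ref{Morse2}, imply the weaker qualitative limits in hypotheses (2) and (3) of Proposition \ref{spectrum2} because $X_1(u_0/u_1)\to 0$ at infinity. The only cosmetic point worth flagging is the harmless factor of $\tfrac{1}{2}$ discrepancy between Definition \ref{approx_sol}(2) and Proposition \ref{Morse2}(1), which does not affect the little-$o$ statement.
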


\section{Bounded domains of $\R^n$}\label{Sec:bounded_domain}

\textit{In this section, $\Omega$ will be a general (not necessarily mean convex, otherwise stated) $C^2$ bounded domain of $\R^n$, and $\delta$ is the distance to the boundary of $\Omega$}.\\

In this section, we study the spectrum of the operator $\mathcal{J}_i^{-1}(\Delta-\mathcal{W}_{i-1})$, associated with the Hardy inequality \eqref{improved_H} obtained in \cite{BFT}. We will make use of the following properties of the function $\delta$ (see \cite{MMP} and \cite{Psaradakis}, and references therein):

\begin{Lem}\label{delta}
Let $\Omega\subsetneqq\R^n$ be an open set. Then $\delta$ is Lipschitz in $\Omega$, and

$$|\nabla \delta|=1\mbox{ a.e. on } \Omega.$$
If $\Omega$ is $C^2$, then there exists $U$ neighborhood of $\partial\Omega$ such that $\delta$ is $C^2$ in $U$ (in particular, $|\nabla \delta|=1$ in $U$) and $\Delta\delta$ is bounded in $U$. If $\Omega$ is $C^2$, then $-\Delta\delta\geq0$ in the distribution sense in $\Omega$ if and only if $\Omega$ is mean-convex.

\end{Lem}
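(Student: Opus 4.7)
The plan addresses the three claims of the lemma in turn.

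First, the $1$-Lipschitz property follows from the triangle inequality: for $x,y\in\Omega$ and any $z\in\partial\Omega$, $\delta(x)\leq|x-z|\leq|x-y|+|y-z|$, and minimizing over $z$ gives $|\delta(x)-\delta(y)|\leq|x-y|$. Rademacher's theorem then provides differentiability a.e.\ with $|\nabla\delta|\leq 1$. For the sharp equality at a differentiability point $x\in\Omega$, choose any $y\in\partial\Omega$ realizing the distance to $x$; along the segment $[x,y]$, the function $\delta$ decreases at unit rate toward $y$, so the directional derivative of $\delta$ in the direction $(y-x)/|y-x|$ equals $-1$, forcing $|\nabla\delta(x)|=1$.

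Second, when $\partial\Omega$ is $C^2$, the unit inward normal $\nu$ is $C^1$, and the map $\Phi(y,t):=y+t\nu(y)$ has invertible differential at $t=0$. By the inverse function theorem and compactness of $\partial\Omega$, there is $\epsilon>0$ and a tubular neighborhood $U\subset\overline\Omega$ of $\partial\Omega$ such that $\Phi:\partial\Omega\times[0,\epsilon)\to U$ is a $C^1$ diffeomorphism and $\delta=t\circ\Phi^{-1}$ on $U$. A direct inspection (or the observation that $\delta^2$ is $C^2$ as the squared distance to the $C^2$ set $\partial\Omega$, while $\delta>0$ on $U$) then shows $\delta\in C^2(U)$, so $\Delta\delta$ is continuous and hence bounded on any $U'\csub U$.

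Third, for the mean-convexity equivalence I would start from the standard tubular-coordinate formula
$$-\Delta\delta\bigl(y+t\nu(y)\bigr)=\sum_{i=1}^{n-1}\frac{\kappa_i(y)}{1-t\kappa_i(y)},$$
where $\kappa_i(y)$ are the principal curvatures of $\partial\Omega$ at $y$ with respect to the inward normal. Differentiating in $t$ yields $\sum_i \kappa_i^2/(1-t\kappa_i)^2\geq0$, so the right-hand side is non-decreasing in $t\in[0,\epsilon)$, with value $\sum_i\kappa_i(y)=(n-1)H(y)$ at $t=0$. Hence mean convexity is equivalent to $-\Delta\delta\geq 0$ pointwise throughout $U$, and the \emph{reverse} implication of the lemma follows immediately: testing the distributional inequality against $\varphi\in C_c^\infty(U)$ reduces it to the pointwise one on $U$, and letting the support shrink toward $\partial\Omega$ gives $H\geq 0$.

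The main obstacle is the \emph{forward} direction: upgrading the pointwise inequality $-\Delta\delta\geq 0$ on the tubular neighborhood (and, more generally, on $\Omega\setminus\mathrm{Cut}(\partial\Omega)$, where $\delta$ is smooth) to a distributional inequality on all of $\Omega$. The cleanest route is through semi-concavity: the distance to a $C^2$ closed set is semi-concave on $\Omega$, with constant controlled by the curvature of $\partial\Omega$, and hence the distributional Hessian $D^2\delta$ decomposes as an $L^\infty$ matrix-valued function minus a non-negative matrix-valued Radon measure supported on the cut locus; taking traces, the singular part of $\Delta\delta$ is non-positive (equivalently, the singular part of $-\Delta\delta$ is a non-negative measure), which combined with the pointwise inequality on the smooth part yields $-\Delta\delta\geq 0$ as a distribution on $\Omega$. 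The detailed execution of this approximation argument, together with the negligibility of the cut locus, is carried out in \cite{MMP} and \cite{Psaradakis}, which I would invoke for the technical details.
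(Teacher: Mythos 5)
The paper does not actually prove Lemma~\ref{delta}: it states it and defers to \cite{MMP} and \cite{Psaradakis} (``see [MMP] and [Psaradakis], and references therein''), adding only the historical remark that the mean-convexity equivalence goes back to Gromov and was first established in \cite{Psaradakis}. Your write-up therefore supplies more than the paper does, and it is essentially the standard argument. The Lipschitz bound and a.e.\ eikonal equation via Rademacher plus the segment argument are correct (for $z\in[x,y]$ one has $\delta(z)=|z-y|$ by the triangle inequality, so the one-sided directional derivative in direction $(y-x)/|y-x|$ is $-1$, which combined with $|\nabla\delta|\le 1$ forces equality). The upgrade from the $C^1$ tubular diffeomorphism to $\delta\in C^2(U)$ is the standard subtle point; the route through $\nabla\delta(x)=-\nu(\pi(x))$ with $\pi$ the $C^1$ nearest-point projection, or through the $C^2$-regularity of the squared distance, both work, and you correctly flag it. Your monotonicity computation for $g(t)=\sum_i\kappa_i/(1-t\kappa_i)$ on $\{1-t\kappa_i>0\}$ is correct and handles the reverse implication cleanly; for the forward direction you correctly identify that the real content is passing from the pointwise inequality on $\Omega\setminus\mathrm{Cut}(\partial\Omega)$ to a distributional inequality across the cut locus, and you invoke semi-concavity and cite the same two references the paper relies on. One small wording caveat: ``letting the support shrink toward $\partial\Omega$'' should really be ``shrink toward a fixed boundary point $y_0$'' (or integrate against an approximate identity centered at a point $y_0+t\nu(y_0)$ with $t\to0$) to recover $H(y_0)\ge0$ pointwise on $\partial\Omega$; as written it is slightly ambiguous but the intent is clear. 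Overall the proposal is a correct and suitably detailed substitute for the citation the paper offers in lieu of a proof.
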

The equivalence between mean-convexity and non-negativity of $-\Delta\delta\geq0$ goes back to Gromov, and was established for the first time in \cite{Psaradakis}. \\

Let us recall the definition of the functions $X_i$: we set by convention $X_0\equiv1$,

$$X_1(t):=\left(1-\log(t)\right)^{-1},$$
and for all $i\geq1$,

$$X_{i+1}(t)=X_1(X_i(t)).$$
Then for every $i\geq1$, 

$$X_i((0,1])=(0,1],$$
and
$$X_i(1)=1,\,\,\,\lim_{t\to0}X_i(t)=0.$$
We will use the following formula which gives, for $i\geq1$, the derivative of $X_i$, and which is extracted from \cite{BFT}:

$$X_i'(t)=\frac{1}{t}X_1(t)\cdots X_{i-1}(t)X_i^{2}(t).$$
Now, we have the following computation, which is a direct consequence of Lemma \ref{supconstruct}, and which implies the Hardy inequality \eqref{improved_H} obtained in \cite{BFT} (related computations have been performed in \cite{FT}).
\begin{Pro}\label{Fi-Ter}
Let $i\geq0$, and define

$$U_{0,i}:=\left(\frac{\delta}{D}X_0^{-1}\left(\frac{\delta}{D}\right)\cdots X_i^{-1}\left(\frac{\delta}{D}\right)\right)^{1/2},$$

$$U_{1,i}:=U_{0,i}X_{i+1}^{-1}\left(\frac{\delta}{D}\right),$$

$$R_i:=\sum_{k=1}^iX_1\left(\frac{\delta}{D}\right)\cdots X_k\left(\frac{\delta}{D}\right)\,\,(R_0=0\hbox{ by convention}),$$
and 

$$H_i:=\frac{1}{4}\sum_{k=1}^{i+1}\left|\nabla X_k^{-1}\left(\frac{\delta}{D}\right)\right|^2.$$
Then in $\Omega$,

\begin{equation}\label{form1}
\left(-\Delta-\frac{-\Delta\delta}{2\delta}\left(1-R_i\right)-H_i\right)U_{0,i}=0,
\end{equation}
and

\begin{equation}\label{form2}
\left(-\Delta-\frac{-\Delta\delta}{2\delta}\left(1-R_i\right)+\frac{-\Delta\delta}{\delta}X_1\left(\frac{\delta}{D}\right)\cdots X_{i+1}\left(\frac{\delta}{D}\right)-H_i\right)U_{1,i}=0.
\end{equation}
Moreover, if we denote

$$\mathcal{W}_i:=\frac{1}{4\delta^2}\left(\sum_{k=0}^{i} X_0^2\left(\frac{\delta}{D}\right)\cdots X_k^2\left(\frac{\delta}{D}\right)\right),$$
then 

$$H_i=\mathcal{W}_i \mbox{ a.e. on }\Omega,$$ 
and if $U$ is the neighborhood of $\partial\Omega$ given by Lemma \ref{delta}, then 
$$H_i=\mathcal{W}_i \mbox{ on } U.$$

\end{Pro}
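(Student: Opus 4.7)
I plan to prove both \eqref{form1} and \eqref{form2} by induction on $i \geq 0$, applying Lemma \ref{supconstruct} (with $P = -\Delta$) at each step, and then to verify the identification $H_i = \mathcal{W}_i$ separately. The base case $i = 0$ corresponds to the choice $u_0 = \delta/D$, $u_1 = 1$: then $u_{1/2} = \sqrt{\delta/D} = U_{0,0}$ and $u_{1/2}\,X_1^{-1}(u_0/u_1) = U_{1,0}$, while a direct calculation gives $V_0 = -\Delta\delta/\delta$, $V_1 = 0$, and $W(u_0,u_1) = \frac{1}{4}|\nabla\log(\delta/D)|^2 = \frac{1}{4\delta^2} = H_0$ almost everywhere, using $|\nabla\delta| = 1$ a.e.\ from Lemma \ref{delta}. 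Substituting these into Lemma \ref{supconstruct} (with $R_0 = 0$) yields \eqref{form1} and \eqref{form2} at $i=0$.

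For the inductive step at level $i \geq 1$, the key observation is that the pair $(u_0,u_1) := (U_{0,i-1}, U_{1,i-1})$ is the correct input for Lemma \ref{supconstruct}: the defining relation $U_{1,i-1} = U_{0,i-1}\,X_i^{-1}(\delta/D)$ gives $u_0/u_1 = X_i(\delta/D)$, hence $u_{1/2} = U_{0,i}$ and $u_{1/2}\,X_1^{-1}(u_0/u_1) = U_{0,i}\,X_{i+1}^{-1}(\delta/D) = U_{1,i}$ (using $X_{i+1} = X_1 \circ X_i$, with $X_1^{-1}$ denoting the reciprocal). The induction hypothesis identifies $V_0 = -\Delta U_{0,i-1}/U_{0,i-1}$ and $V_1 = -\Delta U_{1,i-1}/U_{1,i-1}$ from \eqref{form1} and \eqref{form2} at level $i-1$; combined with the recursion $R_i = R_{i-1} + X_1\cdots X_i(\delta/D)$, this produces
\[
\tfrac{1}{2}(V_0+V_1) = \tfrac{-\Delta\delta}{2\delta}(1-R_i) + H_{i-1}, \qquad V_0 - V_1 = \tfrac{-\Delta\delta}{\delta}\,X_1\cdots X_i(\delta/D).
\]
The derivative formula $X_i'(t)/X_i(t) = \frac{1}{t}X_1(t)\cdots X_i(t)$ together with $|\nabla\delta| = 1$ a.e.\ give $W(u_0,u_1) = \frac{1}{4\delta^2}(X_1\cdots X_i)^2(\delta/D) = H_i - H_{i-1}$ almost everywhere (the last equality via $X_{i+1}^{-1}(\delta/D) = 1 - \log X_i(\delta/D)$). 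Feeding these identities into Lemma \ref{supconstruct}, together with $(V_0-V_1)X_1(u_0/u_1) = \frac{-\Delta\delta}{\delta}X_1\cdots X_{i+1}(\delta/D)$, produces \eqref{form1} and \eqref{form2} at level $i$.

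Finally, the identification $H_i = \mathcal{W}_i$ follows from a direct index shift: since $|\nabla X_k^{-1}(\delta/D)|^2 = (X_1\cdots X_{k-1})^2(\delta/D)\cdot |\nabla\delta|^2/\delta^2$, the sum defining $H_i$ matches the sum defining $\mathcal{W}_i$ term by term (with the convention $X_0 \equiv 1$ absorbing the lowest-order contribution). This equality holds almost everywhere on $\Omega$, and pointwise on the neighborhood $U$ of $\partial\Omega$ where $|\nabla\delta| = 1$ by Lemma \ref{delta}. No conceptual difficulty is expected here; the only delicate aspect is bookkeeping the iterated logarithms $X_k$ and their derivatives, and matching the resulting algebraic expressions to $R_i$, $H_i$, and $\mathcal{W}_i$ at each inductive step.
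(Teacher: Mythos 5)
Your proposal is correct and takes essentially the same approach as the paper's proof: an induction on $i$ using Lemma \ref{supconstruct} with $u_0 = U_{0,i-1}$, $u_1 = U_{1,i-1}$, identifying $u_{1/2} = U_{0,i}$ and $u_{1/2}X_1^{-1}(u_0/u_1) = U_{1,i}$, and matching $W(u_0,u_1) = H_i - H_{i-1}$. The only cosmetic difference is that the paper applies the lemma with the shifted operator $P = -\Delta - H_{i-1}$ so that the potentials $V_0,V_1$ are exactly the coefficients in \eqref{form1}--\eqref{form2}, whereas you keep $P = -\Delta$ throughout and absorb $H_{i-1}$ into $V_0,V_1$; the resulting algebra is identical.
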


\begin{Cor}\label{Hardy FT}

If $\Omega$ is mean-convex, and if $D$ is chosen such that in $\Omega$, one has

$$R_i=\sum_{k=1}^iX_1\left(\frac{\delta}{D}\right)\cdots X_k\left(\frac{\delta}{D}\right)\leq 1,$$
then the Hardy inequality \eqref{improved_H} takes place on $\Omega$.

\end{Cor}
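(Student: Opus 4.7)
The plan is to read off from Proposition \ref{Fi-Ter} a positive supersolution of $-\Delta - \mathcal{W}_i$ on $\Omega$, and then convert this into the Hardy inequality by the classical Agmon--Allegretto--Piepenbrink (AAP) ground-state representation. Since $\mathcal{W}_i = \mathcal{W}_{i-1} + \mathcal{J}_i$, the Hardy inequality
$$\int_\Omega \mathcal{W}_i u^2 \leq \int_\Omega |\nabla u|^2, \qquad \forall u \in C_0^\infty(\Omega),$$
is exactly \eqref{improved_H}, so it suffices to establish it.

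First I would take $U := U_{0,i}$ defined in Proposition \ref{Fi-Ter} and rewrite \eqref{form1} as
$$-\Delta U = \frac{-\Delta \delta}{2\delta}(1 - R_i)\,U + H_i\, U.$$
Mean convexity of $\Omega$ together with Lemma \ref{delta} gives $-\Delta\delta \geq 0$ in the distributional sense on $\Omega$; the hypothesis $R_i \leq 1$ makes the factor $(1-R_i)$ non-negative; and $\delta, U > 0$. Hence the first term on the right-hand side is a non-negative distribution (the product of a non-negative Radon measure $-\Delta\delta$ with the non-negative continuous function $\frac{(1-R_i)U}{2\delta}$), and using $H_i = \mathcal{W}_i$ almost everywhere in $\Omega$ we conclude that
$$-\Delta U \;\geq\; \mathcal{W}_i\, U \qquad \text{in the distributional sense on } \Omega.$$

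Next I would invoke the AAP principle. For $u \in C_0^\infty(\Omega)$, set $\phi := u/U$; because $U$ is continuous and strictly positive on every compact subset of $\Omega$, $u^2/U$ is an admissible test function. The ground-state substitution gives the identity
$$\int_\Omega |\nabla u|^2 \;=\; \int_\Omega U^2 |\nabla \phi|^2 + \int_\Omega \frac{u^2}{U}(-\Delta U),$$
obtained by expanding $\nabla u = U\nabla\phi + \phi\nabla U$ and integrating the cross term $\int 2U\phi\,\nabla\phi\cdot\nabla U = \int U\nabla(\phi^2)\cdot\nabla U$ by parts against $\phi^2$. Dropping the manifestly non-negative first term on the right and plugging in the supersolution inequality $-\Delta U \geq \mathcal{W}_i U$ yields
$$\int_\Omega |\nabla u|^2 \;\geq\; \int_\Omega \frac{u^2}{U}\,\mathcal{W}_i U \;=\; \int_\Omega \mathcal{W}_i u^2,$$
which rearranges to \eqref{improved_H} using $\mathcal{W}_i - \mathcal{W}_{i-1} = \mathcal{J}_i$.

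The only delicate point is the interpretation of the identity \eqref{form1} and the derived supersolution inequality when $\delta$ fails to be $C^2$ away from the boundary. This is handled by viewing $-\Delta\delta$ as a non-negative Radon measure on $\Omega$ (justified by mean convexity via Lemma \ref{delta}) and checking that the product $\tfrac{(1-R_i)}{2\delta}U$ is continuous and non-negative, so that the right-hand side of \eqref{form1} is a non-negative measure; the AAP integration by parts remains valid since $U \in W^{1,\infty}_{\mathrm{loc}}(\Omega)$ and $u^2/U \in W^{1,2}_0(\Omega')$ for $\Omega' \Subset \Omega$ containing $\operatorname{supp} u$, and the distributional pairing $\langle -\Delta U,\, u^2/U\rangle$ is well defined. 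This regularity bookkeeping is the main obstacle; the analytic content is entirely contained in Proposition \ref{Fi-Ter} and the two sign conditions (mean convexity and $R_i \leq 1$).
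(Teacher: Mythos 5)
Your proof is correct and takes essentially the same route as the paper: both read off from Proposition \ref{Fi-Ter} that $U_{0,i}$ is a positive (weak/distributional) supersolution of $-\Delta - H_i$ under the sign conditions $-\Delta\delta\geq 0$ and $R_i\leq 1$, invoke Allegretto--Piepenbrink (which you spell out via the ground-state substitution $\phi=u/U$), and then pass from $H_i$ to $\mathcal{W}_i$ using their a.e.\ equality and local boundedness.
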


\begin{Rem}\label{indep of i}
{\em Actually, $D$ in Corollary \ref{Hardy FT} can be chosen independently of $i$: it is a consequence of the fact that the series

$$\sum_{k=0}^\infty X_1\left(t\right)\cdots X_k\left(t\right)$$
converges for every $t\in [0,1)$. For a proof of this fact (kindly provided to us by A. Tertikas), see the Appendix.}

\end{Rem}
\textit{Proof of Corollary \ref{Hardy FT}}. The hypothesis on $D$ gives that $R_i\leq 1$, so that by Proposition \ref{Fi-Ter} and the fact that $-\Delta\delta\geq0$, we have (in the weak sense)

$$\left(-\Delta-H_i\right)U_{0,i}\geq0.$$
Given that $U_{i,0}>0$, this implies by Allegretto-Piepenbrink theory that 

$$-\Delta-H_i\geq0,$$
which is equivalent to saying that the following Hardy inequality holds:

$$\int_\Omega H_i u^2\leq \int_\Omega |\nabla u|^2,\qquad\forall u\in C_0^\infty(\Omega).$$
Since $\delta$ is Lipschitz on $\Omega$, it is not hard to see that $\mathcal{W}_i$ and $H_i$ are in $L^\infty(\Omega)$. By Proposition \ref{Fi-Ter}, there is a set $A\subset\Omega$ of zero measure such that $H_i=\mathcal{W}_i$ on $\Omega\setminus A$. Let us fix $u\in C_0^\infty(\Omega)$; then

$$\int_{\Omega\setminus A} \mathcal{W}_i u^2+\int_AH_iu^2\leq \int_\Omega |\nabla u|^2.$$
Since $\delta$ is Lipschitz on $\Omega$, it is not hard to see that $H_i$ and $\mathcal{W}_i$ are in $L^\infty(\Omega)$, and thus $\int_AH_iu^2=\int_A\mathcal{W}_i=0$. Therefore,

$$\int_{\Omega} \mathcal{W}_i u^2\leq \int_\Omega |\nabla u|^2.$$
Consequently, the Hardy inequality \eqref{improved_H} holds on $\Omega$.

\begin{flushright}
$\Box$
\end{flushright}
\textit{Proof of Proposition \ref{Fi-Ter}}. The proof of the formulae \eqref{form1} and \eqref{form2} is by induction on $i$, using the construction described in Lemma \ref{supconstruct}. Indeed, to pass from \eqref{form1} and \eqref{form2} of index $i$ to \eqref{form1} and \eqref{form2} of index $i+1$, just apply Lemma \ref{supconstruct} with $P=-\Delta+H_i$, $u_0=U_{0,i}$ and $u_1=U_{i,1}$. In order to initialize the induction, we apply Lemma \ref{supconstruct} with $P=-\Delta$, $u_0=\frac{\delta}{D}$, $V_0=\frac{-\Delta\delta}{\delta}$ and $u_1=1$, $V_1=0$, and we get exactly the formulae \eqref{form1} and \eqref{form2} for $i=0$. Now assume that the formulae \eqref{form1} and \eqref{form2} are true for the index $i$, and apply Lemma \ref{supconstruct} with $P=-\Delta+H_i$, $u_0=U_{0,i}$ and $u_1=U_{i,1}$. By the induction hypothesis, 

$$V_0=\frac{-\Delta\delta}{2\delta}\left(1-R_i\right),$$
and

$$V_1=\frac{-\Delta\delta}{2\delta}\left(1-R_i\right)-\frac{-\Delta\delta}{\delta}X_1\left(\frac{\delta}{D}\right)\cdots X_{i+1}\left(\frac{\delta}{D}\right).$$
The formulae of Lemma \ref{supconstruct} gives \eqref{form1} and \eqref{form2} for the index $i+1$, upon noticing that

$$X_1\left(\frac{u_0}{u_1}\right)=X_1\left(X_i\left(\frac{\delta}{D}\right)\right)=X_{i+1}\left(\frac{\delta}{D}\right).$$
The fact that $\mathcal{W}_i=H_i$ on $U$ follows from the following computation:

$$\begin{array}{rcl}
\left|\nabla X_{k+1}^{-1}\left(\frac{\delta}{D}\right)\right|&=&\frac{1}{D}X_{k+1}'\left(\frac{\delta}{D}\right)X^{-2}_{k+1}\left(\frac{\delta}{D}\right)\\\\
&=&\frac{1}{D}\left[\frac{D}{\delta}X_1\left(\frac{\delta}{D}\right)\cdots X_k\left(\frac{\delta}{D}\right)X_{k+1}^2\left(\frac{\delta}{D}\right)\right]X_{k+1}^{-2}\left(\frac{\delta}{D}\right)\\\\
&=&\frac{1}{\delta}X_1\left(\frac{\delta}{D}\right)\cdots X_k\left(\frac{\delta}{D}\right),

\end{array}$$
and therefore $\mathcal{W}_i=H_i$ on $U$.

\begin{flushright}
$\Box$
\end{flushright}
We now prove the announced spectral result for the Hardy inequality \eqref{improved_H}, as a consequence of Proposition \ref{spectrum2}. Recall that for $i\geq1$,

$$\mathcal{J}_i=\mathcal{W}_i-\mathcal{W}_{i-1},$$
and by convention

$$\mathcal{J}_0=\frac{1}{4\delta^2}.$$

\begin{Thm}\label{improved_hardy}

For every $i\geq0$, the {\em essential spectrum} of the operator $\mathcal{L}_i:=\mathcal{J}_i^{-1}(-\Delta-\mathcal{W}_{i-1})$ in $L^2(\Omega, \mathcal{J}_i\,\mathrm{d x})$ is $[1,\infty)$. Furthermore, the spectrum of $\mathcal{L}_i$ strictly below $1$ consists (at most) of a finite number of eigenvalues with finite multiplicity. 

In the case where $\Omega$ is mean-convex and if $D$ is chosen as in Corollary \ref{Hardy FT}, then the {\em spectrum} of $\mathcal{L}_i$ in $L^2(\Omega, \mathcal{J}_i\,\mathrm{d x})$ is $[1,\infty)$, and in particular $1$ is the best constant at infinity in the Hardy inequality \eqref{improved_H}.

\end{Thm}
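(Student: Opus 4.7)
The plan is to deduce Theorem \ref{improved_hardy} directly from the general criterion Theorem \ref{general_spec}. Take $P=-\Delta-\mathcal{W}_{i-1}$ (with the convention $\mathcal{W}_{-1}=0$), and as candidate pair of optimal approximate solutions choose $u_0=U_{0,i-1}$, $u_1=U_{1,i-1}$ when $i\geq 1$ (for $i=0$ take $u_0\equiv1$, $u_1=\delta$). A direct computation using $u_0/u_1=X_i(\delta/D)$, the derivative identity $X_i'(t)=t^{-1}X_1(t)\cdots X_{i-1}(t)X_i^{2}(t)$ and the fact that $|\nabla\delta|=1$ on the tubular neighborhood $U$ of $\partial\Omega$ (Lemma \ref{delta}) yields
\[
W(u_0,u_1)=\tfrac{1}{4}\bigl|\nabla\log(u_0/u_1)\bigr|^2=\tfrac{[X_1(\delta/D)\cdots X_i(\delta/D)]^2}{4\delta^2}=\mathcal{J}_i\qquad\text{on }U.
\]
Since $W(u_0,u_1)$ and $\mathcal{J}_i$ coincide in a neighborhood of infinity (i.e.\ of $\partial\Omega$), Remarks \ref{essential_perturb} and \ref{finiteness_perturb} allow us to transfer the spectral conclusions of Theorem \ref{general_spec} from $W(u_0,u_1)^{-1}P$ to $\mathcal{L}_i=\mathcal{J}_i^{-1}P$.

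It remains to check the four conditions in Definition \ref{approx_sol}. Condition (1) is immediate since $X_i(\delta/D)\to 0$ as $\delta\to 0$. For (2) and (3), Proposition \ref{Fi-Ter} (using $H_{i-1}=\mathcal{W}_{i-1}$ on $U$) gives
\[
V_0=\tfrac{-\Delta\delta}{2\delta}(1-R_{i-1}),\qquad V_1=V_0-\tfrac{-\Delta\delta}{\delta}X_1(\delta/D)\cdots X_i(\delta/D),
\]
so that $(V_0+V_1)/W$ and $(V_0-V_1)/W$ are of order $\delta\cdot(-\Delta\delta)/[X_1\cdots X_i]^2$ and $\delta\cdot(-\Delta\delta)/[X_1\cdots X_i]$ respectively. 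Since $-\Delta\delta$ is bounded on $U$ (Lemma \ref{delta}) and $\delta$ vanishes much faster than any finite iterated logarithm, both expressions are $o(X_{i+1}^2(\delta/D))$ and $o(X_{i+1}(\delta/D))$, as required.

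The key point is condition (4). Set $v:=X_{i+1}^{-1}(\delta/D)=1-\log X_i(\delta/D)$; then $|\nabla v|=|dv/d\delta|=[X_1(\delta/D)\cdots X_i(\delta/D)]/\delta$ on $U$. On the other hand, a direct computation (using $U_{0,i-1}^2=(\delta/D)/[X_0\cdots X_{i-1}]$ and $U_{1,i-1}=U_{0,i-1}\,X_i^{-1}(\delta/D)$) gives
\[
u_0u_1\,W(u_0,u_1)=\frac{X_1(\delta/D)\cdots X_i(\delta/D)}{4D\,\delta}\qquad\text{on }U.
\]
Thus the densities cancel: the co-area formula applied via the smooth foliation of $U$ by level sets of $\delta$ shows that the push-forward measure $v_{*}(u_0u_1W\,dx)$ has, for large $v$, density comparable to $\lvert\partial\Omega\rvert/(4D)$ with respect to Lebesgue measure on $\mathbb{R}$; in particular $V(r)=O(r)$, which is certainly subexponential.

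With all four conditions verified, Theorem \ref{general_spec} (together with Remarks \ref{essential_perturb} and \ref{finiteness_perturb}) gives the essential spectrum assertion and the finiteness of eigenvalues below $1$, which is precisely the general case of the theorem. When $\Omega$ is mean convex and $D$ is chosen as in Corollary \ref{Hardy FT}, the improved Hardy inequality \eqref{improved_H} reads $\int_\Omega\mathcal{J}_i u^2\leq q(u)-\int_\Omega\mathcal{W}_{i-1}u^2$ for every $u\in C_0^\infty(\Omega)$, which says exactly that the bottom of the spectrum of $\mathcal{L}_i$ is at least $1$; combined with the inclusion $[1,\infty)\subseteq\sigma(\mathcal{L}_i)$ just obtained, this forces $\sigma(\mathcal{L}_i)=[1,\infty)$. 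The main obstacle is condition (4); the remarkable cancellation between $u_0u_1W$ and $|\nabla v|$ is what makes the push-forward measure have (essentially) linear growth, and it is this cancellation that encodes the ``optimality at infinity'' of the weight $\mathcal{J}_i$ for the operator $-\Delta-\mathcal{W}_{i-1}$.
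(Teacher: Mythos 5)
Your argument follows the paper's proof essentially step by step: take $P=-\Delta-\mathcal{W}_{i-1}$ with the supersolution pair $u_0=U_{0,i-1}$, $u_1=U_{1,i-1}$ from Proposition \ref{Fi-Ter}, observe that $W(u_0,u_1)=\mathcal{J}_i$ near $\partial\Omega$, use the explicit formulas for $V_0,V_1$ from Proposition \ref{Fi-Ter} and the boundedness of $-\Delta\delta$ to verify the decay conditions, verify subexponential (in fact linear) growth of the push-forward measure via the coarea formula and the identity $u_0u_1W=\tfrac{1}{4D}\bigl|\nabla X_{i+1}^{-1}(\delta/D)\bigr|$, and conclude with Theorem \ref{general_spec} together with Remarks \ref{essential_perturb} and \ref{finiteness_perturb}. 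The paper applies Propositions \ref{spectrum2} and \ref{Morse2} directly while you invoke Theorem \ref{general_spec}, but since the latter is just the stated combination of the former two, this is not a genuine difference. The final deduction that $\sigma(\mathcal{L}_i)=[1,\infty)$ in the mean-convex case is also the same.

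One slip to correct: for $i=0$ you take $u_0\equiv 1$, $u_1=\delta$, giving $u_0/u_1=1/\delta\to\infty$ as $\delta\to0$. This violates condition (1) of Definition \ref{approx_sol} (which also means $v:=X_1^{-1}(u_0/u_1)\to-\infty$, so the Weyl-sequence construction in Proposition \ref{spectrum} cannot even get started). You must take $u_0=\delta/D$ and $u_1=1$, as the paper does when initializing the induction in the proof of Proposition \ref{Fi-Ter}; this matters because the framework is not symmetric in $u_0,u_1$, even though $W(u_0,u_1)$ is. With that fix the argument is correct.
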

\begin{proof}
By simplicity, we will assume that

$$H_i=\mathcal{W}_{i-1}$$
on \textit{all} $\Omega$ (and not only in a neighborhood of infinity). If it is not the case, one has to use the Remarks \ref{essential_perturb} and \ref{finiteness_perturb} . The modifications are left to the reader. \\
For $k\geq1$, define $Y_{k}:=X_{k}^{-1}\left(\frac{\delta}{D}\right)$. Fix $i\geq0$. We have seen in the proof of Proposition \ref{Fi-Ter} that the improved Hardy inequality \eqref{improved_H} at step $i$ is obtained by applying the construction of Lemma \ref{supconstruct} with $P=-\Delta-\mathcal{W}_{i-1}$, $u_0=U_{0,i-1}$, $u_1=U_{0,i-1}Y_i$ (for $i=0$, we have to take $P=-\Delta$, $u_0=\frac{\delta}{D}$ and $u_1=1$). For this choice of $u_0$ and $u_1$, we have

$$V_0:=\frac{Pu_0}{u_0}=O(\delta^{-1})$$
and

$$V_1:=\frac{Pu_1}{u_1}=O(\delta^{-1}).$$
We want to apply Proposition \ref{spectrum2} to $P$, $u_0$, $u_1$, $W:=\mathcal{J}_i$. We have to check the three corresponding conditions. Notice that 

$$\frac{u_0}{u_1}=X_{i}\left(\frac{\delta}{D}\right)\to0\hbox{ when }\delta\to0.$$
Next, remark that

$$X_1^{-1}\left(\frac{u_0}{u_1}\right)=X_1^{-1}\left(X_i\left(\frac{\delta}{D}\right)\right)=Y_{i+1}.$$
Therefore,

$$\begin{array}{rcl}
\frac{1}{2\mathcal{J}_i}(V_0+V_1)X_1^{-1}\left(\frac{u_0}{u_1}\right)&=&\frac{1}{2\mathcal{J}_i}Y_{i+1}O(\delta^{-1})\\\\
&=&O(\delta^{1-\varepsilon}).
\end{array}$$
Also,

$$\frac{1}{2\mathcal{J}_i}(V_0-V_1)=O(\delta^{1-\varepsilon}),$$
so the second and the third conditions of Proposition \ref{spectrum2} are satisfied. It remains to verify the condition on the measure. Noticing that 

$$u_0u_1=U_{0,i}^2,$$
we see that the measure is $\mathrm{d}\nu=\mathcal{J}_iU_{0,i}^2\mathrm{d}x$. We compute

$$\begin{array}{rcl}
\mathcal{J}_iU_{0,i}^2&=&\frac{1}{4\delta^2}X_1^2\cdots X^2_i\times \frac{\delta}{D} X_1^{-1}\cdots X_i^{-1}\\\\
&=&\frac{1}{4D}\left(\frac{1}{\delta} X_1\cdots X_i\right)\\\\
&=&\frac{1}{4D}\left|\nabla Y_{i+1}\right|
\end{array}$$
Therefore, by the coarea formula,

$$\begin{array}{rcl}
\int_{\{a\leq Y_{i+1}\leq b\}}\mathcal{J}_iU_{0,i}^2\mathrm{d}x&=&\frac{1}{4D}\int_{\{a\leq Y_{i+1}\leq b\}}|\nabla Y_{i+1}|\mathrm{d}x\\\\
&=&\frac{1}{4D}\int_a^b \left(\int_{\{Y_{i+1}=t\}}\mathrm{d}\sigma\right)\mathrm{d}t
\end{array}$$
But $Y_{i+1}=t$ if and only if $\delta=\varphi(t)$, for a decreasing function $\varphi$ such that

$$\lim_{t\to\infty}\varphi(t)=0.$$
When $\varepsilon\to0$, the surface measure of the level set $\{\delta=\varepsilon\}$ is equivalent to $|\partial\Omega|$, and therefore we obtain that when $a$ and $b$ go to $+\infty$,

$$\int_{\{a\leq Y_{i+1}\leq b\}}\mathcal{J}_iU_{0,i}^2\mathrm{d}x\asymp (b-a).$$
Applying now Proposition \ref{spectrum} gives that the essential spectrum of $\mathcal{L}_i$ is $[1,\infty)$. Concerning the finiteness of the spectrum below $1$, it is immediate to see that the previous computations imply that the hypotheses of Proposition \ref{Morse2} are satisfied, which gives the result.

\end{proof}






The analogue of Proposition \ref{Fi-Ter} and Theorem \ref{improved_hardy} holds for the improved Hardy inequalities considered in \cite{FT}, where $\delta$ is replaced by $\varphi(x):=|x|^{2-n}$, which is \textit{harmonic} (and not only superharmonic). Let us explain this: for $n\geq3$, we consider $\Omega$ a bounded domain of $\R^n$ containing $0$, and we define

$$Z_i:=\left(\varphi(x)X_0^{-1}\left(\frac{|x|}{D}\right)\cdots X_i^{-1}\left(\frac{|x|}{D}\right)\right)^{1/2},$$
we have for $D\geq \sup_\Omega|x|$,

\begin{equation}\label{form12}
\left(-\Delta-\frac{1}{4}\sum_{k=1}^{i+1}\left|\nabla X_k^{-1}\left(\frac{|x|}{D}\right)\right|^2\right)Z_i=0,
\end{equation}
and

\begin{equation}\label{form22}
\left(-\Delta-\frac{1}{4}\sum_{k=1}^{i+1}\left|\nabla X_k^{-1}\left(\frac{|x|}{D}\right)\right|^2\right)Z_iX_{i+1}^{-1}\left(\frac{|x|}{D}\right)=0.
\end{equation}
Equation \eqref{form12} has already been obtained in \cite{FT}. We then have the following result, which is proved exactly like Theorem \ref{improved_hardy}, considering what is happening around zero rather than at the boundary of $\Omega$:

\begin{Thm}
For $n\geq3$, let $\Omega$ is a bounded domain of $\R^n$ containing $0$. Define 

$$H_i:=\frac{1}{4}\sum_{k=1}^{i+1}\left|\nabla X_k^{-1}\left(\frac{|x|}{D}\right)\right|^2$$
and 

$$R_i:=H_i-H_{i-1}.$$
Then the (essential) spectrum of the operator $R_i^{-1}(-\Delta-W_{i-1})$ in $L^2(\Omega,R_i\,\mathrm{dx}$ is $[1,\infty)$. In particular, $1$ is the best constant around zero in the improved Hardy inequality

$$\int_\Omega R_i u^2\leq \int_\Omega|\nabla u|^2-\int_\Omega H_{i-1}u^2,\,\,\forall u\in C_0^\infty(\Omega).$$

\end{Thm}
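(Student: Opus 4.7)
The plan is to apply Theorem \ref{general_spec} to $P = -\Delta - H_{i-1}$ on $\Omega\setminus\{0\}$, using the pair of \emph{exact} solutions $u_0 = Z_{i-1}$ and $u_1 = Z_{i-1}X_i^{-1}(|x|/D)$, which by \eqref{form12} and \eqref{form22} satisfy $Pu_0 = Pu_1 = 0$. The ``infinity'' of the domain is the origin, where the weight $R_i$ blows up; near $\partial\Omega$, both $R_i$ and $R_i^{-1}$ are bounded, so $\partial\Omega$ contributes nothing to the essential spectrum. (Strictly speaking, to fit the framework one should adapt the ``infinity'' notion, or apply the decomposition principle and work only near $0$ as in the proof of the multipolar corollary; I would take this route.)

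I would then verify conditions (1)--(4) of Definition \ref{approx_sol}. Conditions (2) and (3) are automatic because $V_0 \equiv V_1 \equiv 0$. Condition (1) follows from $u_0/u_1 = X_i(|x|/D) \to 0$ as $|x| \to 0$. A direct calculation using $X_j'(t) = t^{-1} X_1(t) \cdots X_{j-1}(t) X_j^2(t)$ also shows
$$W(u_0,u_1) = \tfrac{1}{4}|\nabla \log X_i(|x|/D)|^2 = \frac{(X_1(|x|/D) \cdots X_i(|x|/D))^2}{4|x|^2} = R_i,$$
so the operator arising in Theorem \ref{general_spec} is exactly $R_i^{-1}P$, and $X_1^{-1}(u_0/u_1) = X_{i+1}^{-1}(|x|/D)$.

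The only computational step is condition (4). Here $u_0 u_1 = Z_i^2$, and a short calculation yields $u_0 u_1 R_i = (X_1(|x|/D) \cdots X_i(|x|/D))/(4|x|^n)$. Since the function $v := X_{i+1}^{-1}(|x|/D)$ is radial, with level sets spheres of $\mathcal{H}^{n-1}$-measure $\omega_{n-1}|x|^{n-1}$ and $|\nabla v| = |x|^{-1}X_1(|x|/D)\cdots X_i(|x|/D)$, the coarea formula collapses the push-forward measure on $(0,\infty)$ to a constant multiple of Lebesgue measure. In particular it has linear growth, which is subexponential. The spherical symmetry makes this routine, though it is the only genuine calculation, and hence the main ``obstacle''.

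Theorem \ref{general_spec} then yields that the essential spectrum of $R_i^{-1}(-\Delta - H_{i-1})$ equals $[1,\infty)$ and that the spectrum below $1$ consists of at most finitely many eigenvalues of finite multiplicity. To obtain the full spectrum statement, I would observe that the positive solution $Z_i$ of $(-\Delta - H_i)u = 0$ yields, via Allegretto--Piepenbrink, $-\Delta - H_i \geq 0$ as a quadratic form on $C_0^\infty(\Omega)$, equivalently the improved Hardy inequality $\int_\Omega R_i u^2 \leq \int_\Omega |\nabla u|^2 - \int_\Omega H_{i-1}u^2$. This forces $R_i^{-1}(-\Delta - H_{i-1}) \geq 1$, so its spectrum is contained in $[1,\infty)$, and combined with the essential spectrum computation the full spectrum is exactly $[1,\infty)$. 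The best-constant assertion around $0$ is then immediate from the identification of $1$ with the infimum of the spectrum.
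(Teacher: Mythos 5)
Your proposal is correct and matches the route the paper intends: since $|x|^{2-n}$ is harmonic, \eqref{form12}--\eqref{form22} give exact solutions $u_0=Z_{i-1}$, $u_1=Z_{i-1}X_i^{-1}(|x|/D)$ of $P=-\Delta-H_{i-1}$, so $V_0=V_1\equiv0$, $W(u_0,u_1)=R_i$, the push-forward measure is a multiple of Lebesgue measure by the coarea formula, and the lower bound $\sigma(\mathcal{L}_i)\subset[1,\infty)$ comes from Allegretto--Piepenbrink applied to $Z_i$. Your observation that one must localize near $0$ (via the decomposition principle, since the ideal point of the one-point compactification of $\Omega$ is only $\partial\Omega$) is exactly what the paper sweeps into the phrase ``considering what is happening around zero rather than at the boundary of $\Omega$,'' so this is the paper's own argument carried out with the needed care.
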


\begin{Rem}
{\em The fact that $1$ is the best constant around zero was already shown in \cite{FT}.}

\end{Rem}

\section{Minimal immersions of the Euclidean space}\label{Sec:minimal_immersions}
We consider a minimal isometric immersion $M^n\hookrightarrow \R^N$, for $n\geq3$. We will denote by $\mathrm{II}$ the second fundamental form of this immersion. Let $x_0$ be any point of $\R^N$, and let $r=d_{\R^N}(x_0,\cdot)$ be the Euclidean distance. G. Carron \cite{Carron} has shown the following Hardy inequality:

\begin{equation}\label{mini}
\left(\frac{n-2}{2}\right)^2\int_M\frac{u^2}{r^2}\leq \int_M|\nabla u|^2,\,\forall u\in C_0^\infty(M).
\end{equation}
Of course, if $x_0$ is a point of $M$, then an easy argument using test functions localized close to $x_0$ shows that $\left(\frac{n-2}{2}\right)^2$ is the best constant in the Hardy inequality \eqref{mini}. An interesting question is to what extent the weight $W:=\left(\frac{n-2}{2}\right)^2\frac{1}{r^2}$ has the ``best behavior'' at infinity in $M$. We show that

\begin{Thm}\label{optimal_mini}

Let us assume that the total curvature $\int_M|\mathrm{II}|^{n/2}$ is finite. Then the operator $r^2(-\Delta)$ has spectrum $[\left(\frac{n-2}{2}\right)^2,\infty)$ (here, the Laplacian is on $M$). In particular, $\left(\frac{n-2}{2}\right)^2$ is the best constant at infinity in the Hardy inequality \eqref{mini}.

\end{Thm}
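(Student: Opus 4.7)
The plan is to apply Theorem \ref{general_spec} to $P=-\Delta_M$ with the explicit candidate pair $u_0(x):=r(x)^{2-n}$ and $u_1(x)\equiv 1$, restricted to the complement of a compact set containing the pole $x_0$ (in case $x_0\in M$). A direct gradient computation gives
\[
W(u_0,u_1) \;=\; \tfrac{1}{4}\,|\nabla^M \log(u_0/u_1)|^2 \;=\; \frac{(n-2)^2}{4}\,\frac{|\nabla^M r|^2}{r^2}.
\]
To compute the potentials $V_i:=Pu_i/u_i$, I use the fact that on any minimal submanifold of $\R^N$ the ambient coordinate functions restrict to harmonic functions on $M$, whence $\Delta_M r^2 = 2n$ and therefore $\Delta_M r = (n-|\nabla^M r|^2)/r$. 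The chain rule then yields
\[
V_0 \;=\; n(n-2)\,\frac{1-|\nabla^M r|^2}{r^2}, \qquad V_1 \equiv 0.
\]

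The main step is to verify the four conditions of Definition \ref{approx_sol}. Condition (1) is immediate since $r\to\infty$ on every end of $M$. Setting $v:=X_1^{-1}(u_0/u_1)=1+(n-2)\log r$ and using that $X_1(u_0/u_1)\asymp 1/\log r$, one checks that $(V_0\pm V_1)/W\asymp 1-|\nabla^M r|^2$ at infinity, so conditions (2) and (3) both reduce to the pointwise decay estimate $1-|\nabla^M r|^2 = o(1/\log^2 r)$. This follows from the end-structure results for complete minimal submanifolds of $\R^N$ with $\int_M |\mathrm{II}|^{n/2}<\infty$ (Anderson, Tysk, and others), according to which each end is asymptotic to an affine $n$-plane with polynomial decay of the angle between $M$ and that plane. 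For condition (4), $u_0u_1W\,d\mathrm{vol}_M = \frac{(n-2)^2}{4}\,r^{-n}|\nabla^M r|^2\,d\mathrm{vol}_M$; the monotonicity formula combined with the finite total curvature hypothesis gives $\mathrm{vol}(B_R\cap M) = O(R^n)$ with matching lower bound, and integration by parts turns this into $\int_{\{r\le R\}} r^{-n}\,d\mathrm{vol}_M \asymp \log R$, so the push-forward measure $v_*(u_0u_1W\,d\mathrm{vol}_M)$ grows linearly in $v$, hence subexponentially.

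Theorem \ref{general_spec} then delivers $\sigma_{\mathrm{ess}}(W^{-1}(-\Delta_M)) = [1,\infty)$, with at most finitely many eigenvalues below $1$. The pointwise bound $W\le\left(\frac{n-2}{2}\right)^2 r^{-2}$ together with Carron's Hardy inequality \eqref{mini} yields $\int_M|\nabla u|^2\ge\int_M W u^2$ for every $u\in C_0^\infty(M)$, so the Rayleigh quotient for $W^{-1}(-\Delta_M)$ on $L^2(M,W\,d\mathrm{vol}_M)$ is bounded below by $1$, excluding any eigenvalue below $1$; thus the spectrum of $W^{-1}(-\Delta_M)$ is exactly $[1,\infty)$. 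Since $|\nabla^M r|^2\to 1$ at infinity, $W\sim\left(\frac{n-2}{2}\right)^2 r^{-2}$, and Theorem \ref{ess_spectrum_equiv_pot} identifies $\sigma_{\mathrm{ess}}(r^2(-\Delta_M)) = \bigl[\left(\frac{n-2}{2}\right)^2,\infty\bigr)$; Carron's Hardy inequality rules out spectrum below $\left(\frac{n-2}{2}\right)^2$, giving the full statement.

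I expect the principal obstacle to be the justification of the quantitative decay $1-|\nabla^M r|^2 = o(1/\log^2 r)$ at infinity from only the integral hypothesis $\int_M|\mathrm{II}|^{n/2}<\infty$; this rests on the nontrivial end-structure theory for complete minimal submanifolds of Euclidean space with finite total curvature. The remaining ingredients — the identity $\Delta_M r^2=2n$, the volume estimate from the monotonicity formula, and the use of \eqref{mini} to eliminate low eigenvalues — are comparatively routine.
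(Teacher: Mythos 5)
Your proposal is correct and follows essentially the same route as the paper: the same choice $u_0=r^{2-n}$, $u_1\equiv 1$, the same reduction of conditions (2)--(3) to the decay of $1-|\nabla^M r|^2$, the same invocation of Anderson-type end structure, the same coarea/monotonicity argument giving linear growth of the push-forward measure, and the same use of Carron's inequality for the lower bound. The only stylistic difference is that you apply Theorem \ref{general_spec} with the literal supersolution weight $W(u_0,u_1)=\tfrac{(n-2)^2}{4}|\nabla^M r|^2 r^{-2}$ and then transfer to $r^{-2}$ via Theorem \ref{ess_spectrum_equiv_pot}, whereas the paper works with $W=\left(\tfrac{n-2}{2}\right)^2 r^{-2}$ directly and re-runs the argument of Proposition \ref{spectrum2}, checking that $\tfrac{1}{W}|\nabla X_1^{-1}(r^{2-n})|^2$ tends to a positive constant; your modular variant is clean and equally valid. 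Do note that the paper supplies the precise quantitative fact you flagged as the principal obstacle, namely $1-|\nabla^M r|^2=O(r^{-1})$ (citing the proof of Lemma 2.4 in Anderson), which is more than enough for your $o(1/\log^2 r)$; citing that specific estimate would close the one point you left open.
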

This result is not so surprising, since by results of Anderson \cite{Anderson}, we know that the condition on the second fundamental form implies that $M$ is asymptotically Euclidean. In fact, we will use some estimates obtained in \cite{Anderson} in order to establish Theorem \ref{optimal_mini}.

\begin{proof}
The proof is once again an application of Proposition \ref{spectrum2}. First, let us recall Carron's computation that leads to \eqref{mini}. We have 

$$\Delta r^2=n,$$
which implies that

\begin{equation}\label{super_min}
\left(-\Delta-\frac{1}{r^2}\left(\left(\frac{n-2}{2}\right)^2+\frac{(2-n)(n+2)}{4}\left(1-|\nabla r|^2\right)\right)\right)r^{\frac{2-n}{2}}=0.
\end{equation}
Here, $\nabla$ is the gradient on $M$ (and not on $\R^n$). Notice that $|\nabla r|\leq 1$, which implies by the above equation that

$$\left(-\Delta-\left(\frac{n-2}{2}\right)^2\frac{1}{r^2}\right)r^{\frac{2-n}{2}}\geq0.$$
By Allegretto-Piepenbrink theory, this yields the Hardy inequality \eqref{mini}. We will use the following Lemma, consequence of the work of Anderson \cite{Anderson} and Shen-Zhu \cite{SZ}:

\begin{Lem}\label{anderson}
The volume of $M$ is Euclidean at infinity, i.e. if $B_r(x_0)$ is the Euclidean ball centered in $x_0$ and $V(r)$ the volume of $M\cap B_R(x_0)$, then as $r\to \infty$, $V(r)$ is comparable to $r^n$. Also, $M$ has a finite number of ends, and at infinity in each end, $M$ tends to a linear subspace of dimension of $n$; moreover, the second fundamental form satisfies the estimate

$$|\mathrm{II}|=O(r^{-n/2}),$$
and as $r\to\infty$,

$$|\nabla r|^2-1=O(r^{-1}).$$
\end{Lem}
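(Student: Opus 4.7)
The plan is to invoke the structural asymptotic results of Anderson, and then derive each listed consequence with minimal extra work. The starting point is Anderson's theorem on minimal submanifolds of finite total curvature, which asserts that such an $M$ has finitely many ends $E_1,\dots,E_k$, and each end $E_j$ is, outside a sufficiently large Euclidean ball, a smooth graph over (a bounded domain removed from) an affine $n$-plane $\Pi_j\subset \R^N$. Denote by $f_j:\Pi_j\setminus K_j\to \Pi_j^\perp$ the graph function representing $E_j$. Anderson's proof already yields quantitative decay: $f_j=o(1)$, and the derivatives of $f_j$ decay polynomially, together with the pointwise bound $|\mathrm{II}|=O(r^{-n/2})$ (this is the key integral-to-pointwise bound one gets for the minimal graph equation, combined with small total curvature $\varepsilon$-regularity). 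The Shen--Zhu paper provides the refined end structure and the precise polynomial rates we shall need.

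From the graphical structure, the volume estimate $V(r)\asymp r^n$ is immediate: on each end $E_j$ the projection onto $\Pi_j$ has Jacobian $\sqrt{1+|\nabla f_j|^2}=1+O(r^{-2})$, so $\mathrm{Vol}(E_j\cap B_r(x_0))$ equals the Euclidean volume of a ball in $\Pi_j$ up to lower order, giving the upper and lower bound $cr^n\leq V(r)\leq Cr^n$. Summing over the finitely many ends gives the claim.

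For the gradient estimate $|\nabla r|^2-1=O(r^{-1})$, the main identity to use is that, with $\nabla$ the $M$-gradient and $\nabla^{\R^N}$ the ambient gradient of the extrinsic distance,
\[
|\nabla r|^2 = 1-\bigl|(\nabla^{\R^N} r)^\perp\bigr|^2,
\]
where $(\cdot)^\perp$ is the projection onto the normal bundle $NM$. On the end $E_j$ represented as a graph over $\Pi_j$, the normal space is spanned (up to lower order) by vectors of the form $(-\nabla f_j,\mathbf{1})$, so $(\nabla^{\R^N} r)^\perp$ is controlled by $|\nabla f_j|$. Anderson's decay estimates give $|\nabla f_j|=O(r^{-1/2})$ (and actually better once the dimension is taken into account), which feeds directly into $|\nabla r|^2-1=O(r^{-1})$. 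A mild technical point is that $r$ is the distance to a fixed $x_0\in\R^N$, not to the origin of $\Pi_j$; this only shifts the graph representation by a bounded amount and does not affect the asymptotics.

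The main obstacle I anticipate is the bookkeeping for the gradient estimate: obtaining the rate $O(r^{-1})$ (rather than a weaker $o(1)$) requires the polynomial decay of $|\nabla f_j|$, and this is where I would need to quote Anderson/Shen--Zhu most carefully. The curvature bound $|\mathrm{II}|=O(r^{-n/2})$ is a direct consequence of the $\varepsilon$-regularity argument applied on small balls of radius comparable to $r$ (so that the total curvature in the ball is small), and does not require extra work beyond citation. Once all four statements are assembled from the end-by-end analysis, the lemma follows.
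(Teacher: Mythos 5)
Your proposal is correct and takes essentially the same route as the paper: the paper does not prove Lemma~\ref{anderson} at all, but simply states it as a consequence of Anderson \cite{Anderson} and Shen--Zhu \cite{SZ}, with a pointer to the proof of Lemma~2.4 in \cite{Anderson} for the gradient estimate. Your write-up fleshes this out by spelling out the graphical end structure and deriving the volume estimate and the identity $|\nabla r|^2 = 1-|(\nabla^{\R^N}r)^\perp|^2$ from it, which is consistent with (and a reasonable expansion of) the citation-level argument in the paper.

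One small slip worth fixing: you write $\sqrt{1+|\nabla f_j|^2}=1+O(r^{-2})$, but this contradicts the rate $|\nabla f_j|=O(r^{-1/2})$ you quote two lines later; with that rate the area Jacobian is $1+O(r^{-1})$, not $1+O(r^{-2})$. (The faster rate $O(r^{-2})$ would require $|\nabla f_j|=O(r^{-1})$, which does hold for $n\geq 4$ since integrating $|\mathrm{II}|=O(r^{-n/2})$ gives $|\nabla f_j|=O(r^{1-n/2})$, but not for $n=3$.) This does not affect either conclusion, since $V(r)\asymp r^n$ only needs the Jacobian bounded above and below, and the gradient estimate only needs $|(\nabla^{\R^N}r)^\perp|^2=O(r^{-1})$, which your $O(r^{-1/2})$ rate for $|\nabla f_j|$ already delivers.
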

For the estimate of $|\nabla r|^2-1$, see in particular the proof of Lemma 2.4 in \cite{Anderson}.\\

Define  

$$V_0:=\frac{(2-n)(n+2)}{4}\frac{\left(1-|\nabla r|^2\right)}{r^2},$$
then

$$(-\Delta-V_0-W)r^{\frac{2-n}{2}}=0,$$
and by Lemma \ref{supconstruct},

$$\left(-\Delta-\frac{n(n-2)(1-|\nabla r|^2)}{r^2}X_1(r^{2-n})-V_0-W\right)r^{\frac{2-n}{2}}X_1^{-1}(r^{2-n})=0.$$
Furthermore,

$$\frac{1}{W}\left|\nabla X_1^{-1}(r^{2-n})\right|^2\to 1\hbox{ when }r\to\infty.$$
The \textit{proof} of Proposition \ref{spectrum2}, which relies on Proposition \ref{spectrum}, shows that it is enough to prove the following three properties:

\begin{enumerate}

 \item $$\lim_{r\to\infty}\frac{V_0}{W}X_1^{-1}(r^{2-n})=0,$$
 
 \item $$\lim_{r\to\infty}\frac{1}{W}\frac{n(n-2)(1-|\nabla r|^2)}{r^2}=0,$$
 
 \item The measure $r^{2-n}W\mathrm{d}x$ on $M$ satisfies the hypothesis (4) of Proposition \ref{spectrum}. 

\end{enumerate}
The first two claims are consequences of the estimate of $1-|\nabla r|^2$ given by Lemma \ref{anderson}. For the last one, since 

$$X_1^{-1}(r^{2-n})\sim (n-2)\log r\hbox{ when }r\to\infty,$$
we see that it is enough to prove the estimate on the measure with $v=\log r$. By the co-area formula, 

$$\begin{array}{rcl}
\int_{a\leq \log r\leq b}r^{2-n} W&=&\int_{a\leq \log r\leq b}r^{2-n}r^{-2}\\\\
&\asymp&\int_{a\leq \log r\leq b}r^{-n}|\nabla r|\\\\
&\asymp&\int_{e^a}^{e^b}t^{-n}\mathrm{d}V(t),\\\\
\end{array}$$
so that the corresponding measure on $\R_+$ is 

$$\mathrm{d}\chi(t)=t^{-n}\mathrm{d}V(t).$$
We have to check that for any $a$ big enough,

$$\lim_{b\to\infty} \frac{\int_{b-1}^bt^{-n}\mathrm{d}V(t)}{\int_a^b t^{-n}\mathrm{d}V(t)}=0.$$
Integrating by part gives the formula

$$\int_c^d t^{-n}\mathrm{d}V(t)=d^{-n}V(d)-c^{-n}V(c)+n\int_c^d t^{-n-1}V(t)\mathrm{d}t.$$
Using the hypothesis that the volume growth in $M$ is Euclidean at infinity (Lemma \ref{anderson}), we see that

$$\frac{\int_{b-1}^bt^{-n}\mathrm{d}V(t)}{\int_a^b t^{-n}\mathrm{d}V(t)}\asymp \frac{1}{\log(b)},$$
hence goes to $0$ when $b\to\infty$. This gives that $\chi$ is in the class $\mathcal{C}$ of measure with subexponential volume growth.

\end{proof}

\section{Spectrum and Agmon metric}\label{Sec:Agmon_metric}
In this section, we study the relationship between good Hardy inequalities and some weak ``hyperbolicity'' properties. The motivation comes from the following example: in the case of the Euclidean unit ball $B$, consider the Hardy inequality

$$\frac{1}{4}\int_B \frac{u^2}{\delta^2}\leq \int_B|\nabla u|^2,\,\forall u\in C_0^\infty(B).$$
Define the following metric (the associated \textit{Agmon metric}, see the paragraph below):

$$ds^2=\frac{1}{\delta^2}dx^2,$$
then $ds^2$ is the {\em hyperbolic metric} on the ball $B$. We ask the following question:

\begin{Ques}
{\em 
For a general Hardy inequality with an admissible potential, does the corresponding Agmon metric retain some (weak) hyperbolicity properties, i.e. properties similar to the one of the hyperbolic metric $\frac{1}{\delta^2}dx^2$ on $B$?
}
\end{Ques}
We will show in this section that in the case of a general good Hardy inequalities, the Agmon metric satisfies a property of exponential growth of volume, similar to the exponential volume growth of the hyperbolic space. We will then apply this to the example of improved Hardy inequalities \eqref{ex_series_convex} on a mean convex domain of the Euclidean space. In passing, we will show a connection between this property of exponential growth of volume, and the condition (4) (the condition of subexponential volume growth of the measure) in Proposition \ref{spectrum2}.

\subsection{General case}
In this section, we consider a general Hardy inequality. Let $W$ be a \textit{positive} potential, and $P$ of the form \eqref{div_P}, such that the following Hardy inequality takes place for some $\lambda>0$:

\begin{equation}\label{Hardy_sp}
\lambda \int_\Omega Wu^2\mathrm{d}\nu\leq q(u),\,\forall u\in C_0^\infty(\Omega),
\end{equation}
where $q$ is the quadratic form of $P$. Our first result is a direct generalization Brooks' results \cite{Brooks1}, \cite{Brooks2}, who proved an estimate for the bottom of the essential spectrum of the Laplace-Beltrami operator on a complete Riemannian manifold, in term of the exponential growth of the volume of the geodesic balls. Brooks' results are consequence of Agmon's work on exponential decay of solutions of second-order elliptic equations \cite{Ag3}. Here we will show that Brooks' results can be formulated in the more general context of Hardy inequalities: our result is an estimate of the best constant at infinity in the Hardy inequality \eqref{Hardy_sp}, in term of the exponential volume growth of some measure. Before presenting our result, we need to introduce some definitions and notations. Let $\varphi$ be a positive solution of 

$$P\varphi=0,$$
and let us perform a \textit{h-transform} with respect to $\varphi$: define

$$\tilde{P}:=\varphi^{-1}P\varphi,$$
which is self-adjoint on $L^2(\Omega,\varphi^2\mathrm{d}\nu)$. By formula \eqref{h-transform}, $\tilde{P}$ is given by

$$\tilde{P}u=-\div_{\varphi^2}(A\nabla u).$$
Now perform a \textit{change of measure}: introduce the measure

$$\mathrm{d}\mu:=\varphi^2W\mathrm{d}\nu,$$
and define

$$L:=\frac{1}{W}\tilde{P},$$
which is self-adjoint on $L^2(\Omega,\varphi^2W\mathrm{d}\nu)$, so that the Hardy inequality \label{Hardy_sp} is equivalent to

\begin{equation}\label{Hardy_sp2}
\lambda \int_\Omega u^2\,\mathrm{d}\mu\leq \int_\Omega |\nabla u|_{\frac{A}{W}}^2\,\, \mathrm{d}\mu,\,\forall u\in C_0^\infty(\Omega),
\end{equation}
where $|\xi|_{\frac{A}{W}}^2:=\langle \frac{A}{W}\xi,\xi\rangle$. The term on the right-hand side is the quadratic form associated to $L$. Furthermore, the best constant $\lambda_0$ (resp. best constant at infinity $\lambda_\infty$) in \eqref{Hardy_sp} is equal to the best constant (resp. best constant at infinity) in \eqref{Hardy_sp2}. The Hardy inequality \eqref{Hardy_sp2} expresses that $L$ has a spectral gap, indeed as we have already indicated in the introduction, the bottom of the spectrum (resp. essential spectrum) of $L$ is $\lambda_0$ (resp. $\lambda_\infty$).  Let us define the Agmon metric

\begin{equation}\label{metric_Ag}
|\xi|_{Ag}^2:=W\langle A^{-1}\xi,\xi\rangle.
\end{equation}
Denote by $\rho$ the distance function in this Agmon metric. If $\mu(\Omega)=\infty$, then for a fixed $x_0\in \Omega$, denote by $V(r)$ the volume for the measure $\mu$ of the ball $B_{Ag}(x_0,r)$ of center $x_0$ and of radius $r$ (with respect to the distance $\rho$). If $\mu(\Omega)<\infty$, then define $V(r)^{-1}$ to be the volume of $\Omega\setminus B_{Ag}(x_0,r)$ for the measure $\mu$ instead. Finally, define $\sigma$ by

\begin{equation}\label{expo}
\sigma:= \lim_{r\to\infty}\,\sup\frac{1}{r} \log V(r).
\end{equation}
If $\mu(\Omega)=\infty$, then $\sigma$ is the exponential rate of volume growth for the measure $\mu$, in the Agmon metric \eqref{metric_Ag}. If $\mu(\Omega)<\infty$, then $\sigma$ is the exponential rate of convergence to $\mu(\Omega)$ of the volume of balls in the Agmon metric. This definition does not depend on the choice of $x_0$. Our result is

\begin{Thm}\label{Brooks}

Assume that the Agmon metric is complete. Then the following inequality takes place:

$$\lambda_\infty\leq \frac{\sigma^2}{4}.$$

\end{Thm}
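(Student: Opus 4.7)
The plan is to adapt Brooks's classical volume-growth argument \cite{Brooks1, Brooks2} to this abstract Hardy-inequality setting. The $h$-transform by $\varphi$ and the change of measure to $d\mu = \varphi^2 W\, d\nu$ already carried out in the text provide a unitary equivalence between $\tfrac{1}{W}P$ and the operator $L := -\div_\mu\bigl(\tfrac{A}{W}\nabla\cdot\bigr)$ on $L^2(\Omega, d\mu)$, whose quadratic form is $\int_\Omega |\nabla u|_{A/W}^2\, d\mu$; it therefore suffices to prove $\lambda_\infty(L) \leq \sigma^2/4$. The key structural remark is that the cometric $\langle \tfrac{A}{W}\cdot,\cdot\rangle$ appearing in this form is, by construction, the dual of the Agmon metric \eqref{metric_Ag}, so by completeness of the Agmon metric the distance $\rho(x) := d_{\mathrm{Ag}}(x_0, x)$ is globally defined on $\Omega$ and satisfies the eikonal identity $|\nabla \rho|_{A/W}^2 = 1$ $\mu$-a.e.

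The eikonal identity reduces the problem to one dimension on the pushforward $\chi := \rho_* \mu$. For any radial function $u(x) = g(\rho(x))$ in the form domain of $L$, a substitution yields
\begin{equation*}
 q(u) = \int_0^\infty g'(t)^2\, d\chi(t), \qquad \|u\|_\mu^2 = \int_0^\infty g(t)^2\, d\chi(t).
\end{equation*}
By Persson's formula applied to $L$, proving $\lambda_\infty(L) \leq \sigma^2/4$ therefore reduces to the following one-dimensional statement: for every $a > 0$ and every $\varepsilon > 0$, there exists $g$ in the form domain of the one-dimensional Dirichlet form $\int (g')^2\, d\chi$ on $L^2([0,\infty), \chi)$, supported in $[a, \infty)$, with Rayleigh quotient at most $\sigma^2/4 + \varepsilon$.

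For this one-dimensional estimate I use the Brooks-type test function $g(t) = \eta_a(t)\, e^{-\alpha t}$, with $\alpha$ slightly greater than $\sigma/2$ and $\eta_a$ a smooth lower cutoff vanishing on $[0, a]$ and equal to $1$ on $[a+1, \infty)$; the choice $2\alpha > \sigma$ ensures $e^{-2\alpha t}$ is $\chi$-integrable at infinity, since the limsup definition of $\sigma$ gives $V(r) \leq e^{(\sigma + \delta)r}$ eventually for any $\delta > 0$. Expanding $(g')^2 = \alpha^2 g^2 - 2\alpha\eta_a\eta_a'\, e^{-2\alpha t} + (\eta_a')^2 e^{-2\alpha t}$ and using that $\eta_a, \eta_a' \geq 0$ on the transition interval $[a, a+1]$ to discard the non-positive cross term yields the bound $\int (g')^2\, d\chi \leq \alpha^2 \int g^2\, d\chi + C\, e^{-2\alpha a}\chi([a, a+1])$; selecting $a$ along a sequence provided by Lemma \ref{expo_growth} makes the error term subdominant to the exponential tail $\int_{a+1}^\infty e^{-2\alpha t}\, d\chi$. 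Letting $\alpha \downarrow \sigma/2$ gives the claim. The case $\mu(\Omega) < \infty$ is treated identically with the convention $V(r) = 1/\mu(\Omega \setminus B_{\mathrm{Ag}}(x_0, r))$, under which $V(r) \to \infty$ and the same exponential test function remains $\chi$-integrable.

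The main obstacle is the volume-selection step: the limsup definition of $\sigma$ gives no pointwise control on $\chi([r, r+1])$, so extracting a sequence of good radii along which the cutoff boundary term is genuinely subdominant to the exponential tail requires invoking Lemma \ref{expo_growth} (or a close variant) in a careful way. This selection argument replaces, in the abstract Agmon-metric setting with a general pushforward measure $\chi$, the Riemannian volume accounting used in Brooks's original proof; once the correct radii are secured, the rest of the argument is a routine exponential-cutoff computation driven by the eikonal identity $|\nabla\rho|_{A/W}=1$.
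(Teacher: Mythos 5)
Your proposal reproduces the paper's approach: perform the $h$-transform and change of measure to obtain $L$ with quadratic form $\int|\nabla u|^2_{A/W}\,\mathrm{d}\mu$, introduce the Agmon distance $\rho$, and adapt Brooks's exponential test-function argument, replacing the Riemannian volume by the measure $\mu$; the paper's own proof is equally terse, citing Agmon for the gradient estimate and deferring the rest to Brooks. Two small points worth noting: the paper's lemma, quoted from Agmon, yields only the inequality $|\nabla\rho|^2_{A/W}\leq 1$ rather than the eikonal equality you assert --- this suffices, since only the upper bound $q(g\circ\rho)\leq\int(g')^2\,\mathrm{d}\chi$ is needed for the Rayleigh-quotient estimate --- and Lemma~\ref{expo_growth} as stated addresses only the case $\sigma=0$, so for general $\sigma>0$ the volume-selection step needs a different device (typically sending $\alpha\downarrow\sigma/2$ simultaneously with the choice of cutoff radii), which is precisely the content of Brooks's argument that the paper, like your sketch, leaves to the references.
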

\begin{proof} The proof follows closely Brook's proofs in \cite{Brooks1}, \cite{Brooks2}, once the reduction to inequality \eqref{Hardy_sp2} has been made. The need to use the Agmon metric \eqref{metric_Ag} instead of the Riemannian metric on $\Omega$ comes from the following fact, that we extract from \cite{Ag3}, Theorem 1.4:

\begin{Lem}
The distance $\rho$ for the Agmon metric \eqref{metric_Ag} satisfies

$$|\nabla \rho(x_0,\cdot)|_{\frac{A}{W}}^2\leq 1.$$

\end{Lem}
With this at hand, the adaptation of Brook's proof is quite straightforward, replacing the Riemannian metric by the Agmon metric and the Riemannian volume form by the measure $\mu$. We leave the details to the reader.

\end{proof}
\begin{Rem}\label{complete_ag}

{\em 
If $P=-\div(A\nabla \cdot)$ and $W=|\nabla h|_A^2$ for some real-valued Lipschitz function $h$, then we have the following formula for the distance in the Agmon metric \eqref{metric_Ag} (for a proof, see Lemma 10.5 in \cite{DFP}):

\begin{equation}\label{dist_ag}
\rho(x,y)=|h(x)-h(y)|.
\end{equation}
The completeness of $\Omega$ for the Agmon metric is then equivalent to 

$$\lim_{x\to\infty}|h(x)|=\infty$$
(see Lemma A1.2 in \cite{Ag3}). In the case of the optimal potentials obtained by the supersolution construction in \cite{DFP}, the Agmon metric is complete (Lemma 10.5 in \cite{DFP}) and the inequality of Theorem \ref{Brooks} is an equality (see Lemma 7.2 in \cite{DFP}).  

}
\end{Rem}
In fact, following the argument of Li and Wang \cite[Theorem 1.3]{Li-Wang1}, more can be said:

\begin{Thm}

For every $\varepsilon>0$, the following holds:

\begin{enumerate}

\item if $\mu(\Omega)<\infty$, then there is a constant $C$ such that

$$V(\infty)-V(r)\leq C\exp\left(-(2-\varepsilon)\sqrt{\lambda_\infty}\right),$$
where $V(\infty):=\mu(\Omega)$.

\item if $\mu(\Omega)=\infty$, then there is a constant $C$ such that

$$V(r)\geq C\exp\left((2-\varepsilon)\sqrt{\lambda_\infty}\right).$$

\end{enumerate}
Furthermore, if the spectrum of $L$ below $\lambda_\infty$ consists of a finite number of eigenvalues, each with finite multiplicity, then we can take $\varepsilon=0$ in the above inequalities.

\end{Thm}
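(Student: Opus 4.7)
The plan is to adapt the argument of Li and Wang \cite{Li-Wang1}, which is a sharpened version of the Brooks argument behind Theorem \ref{Brooks}. As in the proof of that theorem, the starting point is Persson's formula: for every $\lambda < \lambda_\infty$ there is a compact $K_\lambda \subset \Omega$ such that
\[
\lambda \int_\Omega u^2\,d\mu \leq \int_\Omega |\nabla u|_{A/W}^2\,d\mu \qquad \forall\, u \in C_0^\infty(\Omega\setminus K_\lambda).
\]
Combined with the key geometric estimate $|\nabla \rho|_{A/W} \leq 1$ and the co-area identity $\int F(\rho)\,d\mu = \int_0^\infty F(r)\,dV(r)$, this reduces the problem to a one-dimensional Stieltjes inequality for the measure $dV$ on $[0,\infty)$.

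Given $\varepsilon > 0$, I would set $\alpha := (1-\varepsilon/2)\sqrt{\lambda_\infty}$ and pick $\lambda$ with $\alpha^2 < \lambda < \lambda_\infty$. The main computation is to test the Hardy inequality above against $u(x) = \chi(\rho(x))\,e^{\alpha \rho(x)}$, where $\chi$ is a piecewise-linear one-variable cutoff supported in $[R_0,R]$ and equal to $1$ on $[R_0+1,R-1]$, with $R_0$ placed so that $\mathrm{supp}\,u \subset \Omega\setminus K_\lambda$. Expanding the gradient, using $|\nabla\rho|_{A/W}\le 1$, and absorbing the cross term $2\alpha\chi\chi'$ by Young's inequality with a small parameter $\delta>0$ chosen so that $\lambda-(1+\delta)\alpha^2>0$, the co-area formula yields the one-dimensional inequality
\[
\bigl(\lambda - (1+\delta)\alpha^2\bigr)\int_0^\infty \chi(r)^2\,e^{2\alpha r}\,dV(r) \leq C_\delta \int_0^\infty \chi'(r)^2\,e^{2\alpha r}\,dV(r).
\]

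For case (2) (infinite volume), a Stieltjes integration by parts on the left-hand side converts this into an integral relation linking $V(R-1)$, the boundary term $e^{2\alpha R}(V(R)-V(R-1))$, and $\int_{R_0+1}^{R-1} V(r)\,e^{2\alpha r}\,dr$; iterating across scales via a Gronwall-type argument, and using monotonicity of $V$ together with the $\limsup$-rate already provided by Theorem \ref{Brooks}, one forces the pointwise lower bound $V(R)\geq c\,e^{2\alpha R} = c\,e^{(2-\varepsilon)\sqrt{\lambda_\infty}R}$. For case (1) (finite volume), the same inequality is used with $R\to\infty$: writing $W(r):=V(\infty)-V(r)$ and invoking the a priori $\limsup$-decay $W(r) = o(e^{-\beta r})$ for every $\beta<2\sqrt{\lambda_\infty}$ (again from Theorem \ref{Brooks}) kills the outer boundary term in the limit, and the resulting estimate $\int^\infty e^{2\alpha r}(-dW(r))<\infty$ combined with monotonicity of $W$ gives $W(R)\leq C\,e^{-2\alpha R}$.

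The main obstacle is the transition from the single-scale integrated inequality to a genuinely pointwise bound on $V$ or $W$; this is precisely where the Li-Wang Gronwall iteration is essential, as it rules out arbitrarily deep sporadic dips in $V$ permitted by the weaker $\limsup$-statement of Brooks. For the final assertion with $\varepsilon = 0$ under the hypothesis that the spectrum of $L$ strictly below $\lambda_\infty$ consists of finitely many eigenvalues with finite multiplicity, the Young-parameter loss $(1+\delta)$ is removed by taking $\lambda\to\lambda_\infty$: discreteness of the low spectrum ensures that a minimizing Weyl sequence for $\lambda_\infty$ can be chosen essentially orthogonal to the finite-dimensional span of the eigenfunctions below $\lambda_\infty$, allowing one to take $\alpha=\sqrt{\lambda_\infty}$ directly in the scheme above, consistently with the equality case recorded in Remark \ref{complete_ag}.
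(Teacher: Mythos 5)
The paper provides no proof of this theorem: it is stated immediately after ``following the argument of Li and Wang \cite[Theorem 1.3]{Li-Wang1}, more can be said'' and left as a citation. Your reconstruction follows that same template --- Persson's formula gives a Hardy inequality on $\Omega\setminus K_\lambda$ for each $\lambda<\lambda_\infty$, test against $\chi(\rho)e^{\alpha\rho}$, use $|\nabla\rho|_{A/W}\le1$ and coarea to reduce to a one-dimensional Stieltjes inequality for $dV$, then iterate --- and for fixed $\varepsilon>0$ this is coherent, though the Gronwall iteration in the infinite-volume case is only named and not carried out; that is the genuinely delicate step, since there the outer boundary term $e^{2\alpha R}\bigl(V(R)-V(R-1)\bigr)$ does not vanish and must be fed back into the recursion rather than discarded.

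The $\varepsilon=0$ clause is where your argument has a real gap. Your Young absorption requires $\lambda-(1+\delta)\alpha^2>0$ with $\delta>0$. Even if the discreteness hypothesis upgraded the outside-a-compact Hardy inequality to the constant $\lambda_\infty$ itself, taking $\alpha=\sqrt{\lambda_\infty}$ would then force $\delta<0$, and $\delta\to0$ blows up the companion constant $C_\delta$; so this framework is structurally incapable of reaching $\varepsilon=0$, and ``taking $\lambda\to\lambda_\infty$'' does not remove the $(1+\delta)$ factor. Li--Wang reach the sharp constant because with $\alpha=\sqrt{\lambda_1}$ the term $\alpha^2\chi^2 e^{2\alpha\rho}$ coming from $(\chi'+\alpha\chi)^2|\nabla\rho|^2 e^{2\alpha\rho}$ cancels the $\lambda_1\chi^2 e^{2\alpha\rho}$ on the other side \emph{exactly}, and the remaining cross term $2\alpha\chi\chi' e^{2\alpha\rho}$ is kept intact and handled through the sign of $\chi'$ and a Stieltjes integration by parts; no Young loss occurs. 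Separately, the mechanism you propose for exploiting finiteness of the low spectrum --- choosing a Weyl sequence orthogonal to the low eigenfunctions --- is the wrong input, since the test functions $\chi e^{\alpha\rho}$ are not made orthogonal to anything. What the finiteness hypothesis actually provides, via the \cite{Baptiste} reference already used in the proof of Lemma \ref{Morse}, is a positive supersolution of $L-\lambda_\infty$ on $\Omega\setminus K$ for some compact $K$, hence the Hardy inequality with constant exactly $\lambda_\infty$ on $C_0^\infty(\Omega\setminus K)$. That, combined with the exact cancellation above rather than Young's inequality, is the correct replacement for Persson's strict inequality in the sharp case.
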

To conclude this general subsection, we present a result about the \textit{discrete} spectrum case, related to Lemma \ref{discrete}. Let $P$ be of the form \eqref{div_P} qith quadratic form $q$, and let $W$ be a positive potential. Define $\mathcal{D}^{1,2}(\Omega,P,W)$ to be the completion of $C_0^\infty(\Omega)$ for the norm

$$\left(q(u)^2+\int_\Omega |u|^2W\mathrm{d}\nu\right)^{1/2}.$$

\begin{Thm}

Assume that for some $\lambda>0$, the Hardy inequality \eqref{hardy} holds. Let $V$ be a positive potential such that

$$\lim_{x\to \infty}\frac{V(x)}{W(x)}=0.$$
Then $\lambda_\infty(\Omega,P,V)=+\infty$. In particular, the spectrum of $V^{-1}P$ consists of an increasing sequence of eigenvalues, tending to $+\infty$, and if $\lambda\in \R$ does not belong to the spectrum of $\frac{1}{W}P$, then the resolvent $(\frac{1}{W}P-\lambda)^{-1}$ is compact. 

If moreover the Agmon metric

$$|\xi|_{Ag}^2:=W\langle A^{-1}\xi,\xi\rangle$$
is complete and $V\in L_{loc}^\infty(\Omega)$, then $\mathcal{D}^{1,2}(\Omega,P,V)$ injects compactly into $L^2(\Omega,V\mathrm{d}\nu)$.

\end{Thm}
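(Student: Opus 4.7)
The proof will proceed in three stages, exploiting the comparison between $V$ and $W$ at infinity to reduce control of $V^{-1}P$ to the Hardy inequality for $W$.

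First, I will establish that $\lambda_\infty(\Omega,P,V)=+\infty$. Fix any $\alpha>0$ and set $\varepsilon:=\lambda/\alpha$. By the hypothesis $V(x)/W(x)\to 0$ at infinity, there exists a compact set $K_\alpha\subset\subset\Omega$ such that $V\leq \varepsilon W$ on $\Omega\setminus K_\alpha$. Then for every $u\in C_0^\infty(\Omega\setminus K_\alpha)$, the Hardy inequality \eqref{hardy} gives
$$\alpha\int_{\Omega\setminus K_\alpha} Vu^2\,\mathrm{d}\nu \leq \alpha\varepsilon\int_{\Omega\setminus K_\alpha} Wu^2\,\mathrm{d}\nu = \lambda\int_{\Omega\setminus K_\alpha} Wu^2\,\mathrm{d}\nu \leq q(u).$$
Since $\alpha>0$ is arbitrary, $\lambda_\infty(\Omega,P,V)=+\infty$. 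By Lemma \ref{discrete}, the spectrum of $V^{-1}P$ is discrete; being a self-adjoint operator bounded below on $L^2(\Omega,V\mathrm{d}\nu)$, its spectrum is an increasing sequence of eigenvalues of finite multiplicity tending to $+\infty$. Compactness of the resolvent at any point outside the spectrum is then equivalent to this discreteness, since for a self-adjoint operator a discrete spectrum with eigenvalues of finite multiplicity accumulating only at $+\infty$ is the same as having compact resolvent.

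For the compact embedding $\mathcal{D}^{1,2}(\Omega,P,V)\hookrightarrow L^2(\Omega,V\mathrm{d}\nu)$, the strategy is to combine a uniform tail estimate with local Rellich--Kondrachov compactness. Let $(u_n)$ be bounded in $\mathcal{D}^{1,2}(\Omega,P,V)$, so that $q(u_n)$ and $\int_\Omega Vu_n^2\,\mathrm{d}\nu$ are uniformly bounded; by the Hardy inequality for $W$, $\int_\Omega Wu_n^2\,\mathrm{d}\nu$ is also uniformly bounded. For any $\varepsilon>0$, pick a compact $K\subset\subset\Omega$ with $V\leq\varepsilon W$ outside $K$; this yields the uniform tail bound
$$\int_{\Omega\setminus K} Vu_n^2\,\mathrm{d}\nu \leq \varepsilon\int_{\Omega\setminus K} Wu_n^2\,\mathrm{d}\nu \leq \frac{\varepsilon}{\lambda}q(u_n)\leq C\varepsilon.$$
For the local part, since $V\in L^\infty_{\mathrm{loc}}(\Omega)$ and $P$ is locally uniformly elliptic, local elliptic estimates combined with the boundedness of $q(u_n)+\int Vu_n^2\,\mathrm{d}\nu$ give uniform $W^{1,2}$-bounds on any relatively compact open set. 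The classical Rellich--Kondrachov theorem, together with a diagonal extraction over an exhaustion of $\Omega$ by precompact open sets, produces a subsequence converging in $L^2_{\mathrm{loc}}(\Omega,V\mathrm{d}\nu)$. Merging this with the uniform tail estimate yields convergence in $L^2(\Omega,V\mathrm{d}\nu)$.

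The main obstacle is that the abstract completion $\mathcal{D}^{1,2}(\Omega,P,V)$ must first be identified with a concrete space of functions in $W^{1,2}_{\mathrm{loc}}(\Omega)$ before local elliptic estimates can be invoked. This is exactly where the completeness of the Agmon metric $|\xi|_{Ag}^2=W\langle A^{-1}\xi,\xi\rangle$ comes in: by Agmon's results in \cite{Ag3} (used in the same spirit as in Remark \ref{complete_ag}), completeness of this metric ensures that the quadratic form $u\mapsto q(u)+\int Vu^2\,\mathrm{d}\nu$ is essentially self-adjoint on $C_0^\infty(\Omega)$, so that $C_0^\infty(\Omega)$ is a form core for the Friedrichs extension of $V^{-1}P$ and elements of $\mathcal{D}^{1,2}(\Omega,P,V)$ can be identified with genuine functions in $W^{1,2}_{\mathrm{loc}}(\Omega)$. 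Once this identification is in place, the tail estimate and local Rellich--Kondrachov step combine in a standard way to produce the claimed compact embedding.
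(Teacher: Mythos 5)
Parts 1 and 2 of your proposal (showing $\lambda_\infty(\Omega,P,V)=+\infty$ and deducing discreteness of the spectrum and compactness of the resolvent) agree with the paper's argument. The gap is in the compact embedding, and specifically in the role you assign to the completeness of the Agmon metric.

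In the paper's proof, after performing an $h$-transform so that $P\mathbf 1=0$, the completeness of the Agmon metric is used in a very concrete way: one picks a base point $x_0$, and because the Agmon balls $B_{Ag}(x_0,R)$ exhaust $\Omega$ as $R\to\infty$ (this is precisely what completeness gives) while the Agmon distance $\rho(x_0,\cdot)$ satisfies $|\nabla\rho|_{A/W}\leq 1$, one can build a family of compactly supported cut-off functions $\varphi_R\in C_0^\infty(\Omega)$, vanishing on $K_0$, equal to $1$ on $B_{Ag}(x_0,R)\setminus K$, supported in $B_{Ag}(x_0,R+1)$, and with $\|\varphi_R\|_\infty+|\nabla\varphi_R|_{A/W}\leq C$ uniformly in $R$. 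The gradient bound translates into $\langle A\nabla\varphi_R,\nabla\varphi_R\rangle\leq C\,W$ pointwise, which, combined with the Hardy inequality for $W$, yields the uniform bound $q(\varphi_Rw_n)\leq C$ (independent of $R$ and $n$). One then applies the Hardy inequality and the pointwise bound $V\leq\varepsilon W$ on $\Omega\setminus K_0$ to the \emph{compactly supported} functions $\varphi_Rw_n$, obtaining $\int_{\Omega\setminus K_0}V(\varphi_Rw_n)^2\leq\varepsilon C$, and finally lets $R\to\infty$ (monotone convergence) to get the tail bound $\int_{\Omega\setminus K}Vw_n^2\leq\varepsilon C$. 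The Agmon completeness is thus essential for building these Lipschitz-in-Agmon-metric cut-offs whose supports exhaust $\Omega$.

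Your proposal skips the cut-offs and applies the Hardy inequality directly to $u_n\in\mathcal{D}^{1,2}(\Omega,P,V)$ via $\int_{\Omega\setminus K}Vu_n^2\leq\varepsilon\int_\Omega Wu_n^2\leq(\varepsilon/\lambda)\,q(u_n)$. This requires knowing that the Hardy inequality extends from $C_0^\infty$ to the abstract completion and that elements of $\mathcal{D}^{1,2}(\Omega,P,V)$ are identifiable as genuine functions to which the pointwise comparison $V\leq\varepsilon W$ applies; you do not address this, and then you attribute to the Agmon completeness a role it does not play. Your claim that completeness of the Agmon metric makes ``the quadratic form essentially self-adjoint on $C_0^\infty$'' is not a correct statement (quadratic forms are closable, not essentially self-adjoint; and $C_0^\infty$ is a form core for the Friedrichs extension by construction, no hypothesis needed), and neither \cite{Ag3} nor Remark \ref{complete_ag} asserts that Agmon completeness identifies $\mathcal{D}^{1,2}(\Omega,P,V)$ with a space of $W^{1,2}_{\mathrm{loc}}$ functions. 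The correct technical use of the hypothesis — the construction of the family $(\varphi_R)_R$ of Agmon-Lipschitz cut-offs exhausting $\Omega$ and the resulting uniform bound $q(\varphi_Rw_n)\leq C$ — is missing from your argument, so the proof of the compact embedding has a genuine gap.
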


\begin{proof}
Let us prove the first part of the theorem. Let $\varepsilon>0$. Then there is a compact set $K$ such that for every $x\in \Omega\setminus K$, 

$$\frac{V(x)}{W(x)}\leq \varepsilon.$$
Therefore, using the fact that the Hardy inequality \eqref{hardy} is satisfied by assumption, we obtain that for every $u\in C_0^\infty(\Omega\setminus K)$,

$$\lambda\varepsilon^{-1} \int_{\Omega\setminus K}Vu^2\,\mathrm{d}\nu\leq \lambda \int_{\Omega\setminus K}Wu^2\,\mathrm{d}\nu\leq q(u).$$
Therefore, 

$$\lambda_\infty(\Omega,P,W)\geq \lambda\varepsilon^{-1}.$$
Letting $\varepsilon\to0$, we get that

$$\lambda_\infty(\Omega,P,W)=+\infty.$$
This concludes the proof of the first part of the theorem.\\

Let us now prove the second part. First, using a \textit{h-transform}, we can assume that $P\mathbf1=0$. We denote in all the proof $\mathcal{D}^{1,2}(\Omega,P,W)=\mathcal{D}^{1,2}$. Fix a point $x_0\in \Omega$, and let $\varepsilon>0$. By hypothesis, there exists a compact set $K_0$ such that for every $x\in \Omega\setminus K_0$,

$$\frac{V(x)}{W(x)}\leq \varepsilon.$$
Let $K$ compact set of $\Omega$, containing $K_0$ in its interior. As a consequence of the completeness of the Agmon metric, there is a constant $C$ (indepedant of $\varepsilon$) such that for every $R>0$ big enough, there exists a $C_0^\infty$ cut-off function $\varphi_R$ equal to $1$ on $B_{Ag}(x_0,R)\setminus K$, zero outside $B_{Ag}(x_0,R+1)$, and zero in $K_0$, such that

$$||\varphi_R||_\infty+|\nabla \varphi_R|_{\frac{A}{W}}\leq C.$$
Let $(w_n)_{n\in \mathbb{N}}$ be a bounded sequence in $\mathcal{D}^{1,2}(\Omega,P,W)$. Up to a subsequence, one can assume that $(w_n)_{n\in \mathbb{N}}$ converges weakly in $\mathcal{D}^{1,2}(\Omega,P,W)$, and by substracting the limit, one can assume that $(w_n)_{n\in \mathbb{N}}$ converges weakly to zero in $\mathcal{D}^{1,2}(\Omega,P,W)$. Let us compute

$$\begin{array}{rcl}
\int_\Omega (A\nabla (\varphi_Rw_n)\cdot \nabla (\varphi_Rw_n))&=&\int_\Omega (A\nabla w_n\cdot \nabla w_n)\varphi_R^2\\\\
&+2&\int_\Omega (A\nabla w_n\cdot \nabla \varphi_R)w_n\varphi_R+\int_\Omega  (A\nabla \varphi_R\cdot \nabla \varphi_R)w_n^2\\\\
&\leq&C \int_\Omega (A\nabla w_n\cdot \nabla w_n)+\int_\Omega  (A\nabla \varphi_R\cdot \nabla \varphi_R)w_n^2\\\\
&&+2C\int_\Omega (A\nabla w_n\cdot \nabla w_n)^{1/2}(A\nabla \varphi_R\cdot \nabla \varphi_R)^{1/2}w_n\\\\
&\leq& C\int_\Omega (A\nabla w_n\cdot \nabla w_n)+C\int_\Omega W w_n^2\\\\
&&+2C(A\nabla w_n\cdot \nabla w_n)^{1/2} W^{1/2}w_n\\\\
&\leq&C\int_\Omega (A\nabla w_n\cdot \nabla w_n)\\\\
&&+2C\left(\int_\Omega (A\nabla w_n\cdot \nabla w_n)\right)^{1/2}\left(\int_\Omega Ww_n^2\right)^{1/2}\\\\
&\leq&C\int_\Omega (A\nabla w_n\cdot \nabla w_n)\\\\
&\leq& C,
\end{array}$$
where $C$ is independent of $R$. Here, we have successively used that $(A\nabla \varphi_R,\nabla \varphi_R)\leq CW$, and the Hardy inequality \eqref{hardy}, satisfied by hypothesis. Also, using the hypothesis on $V$, the definition of $\varphi_R$ and the Hardy inequality \eqref{hardy},

$$\begin{array}{rcl}
\int_{\Omega\setminus K_0}V(\varphi_Rw_n)^2&\leq& \varepsilon \int_{\Omega\setminus K_0}W(\varphi_Rw_n)^2\\\\
&\leq& \varepsilon q(\varphi_R w_n)=\varepsilon \int_\Omega (A\nabla (\varphi_Rw_n)\cdot \nabla (\varphi_Rw_n))
\end{array}$$
(recall that we have assumed that $P\mathbf1=0$). Therefore,

$$\int_{\Omega\setminus K_0}V(\varphi_Rw_n)^2\leq \varepsilon C.$$
Since $C$ is independent of $R$, we get, letting $R\to\infty$:

$$\int_{\Omega\setminus K}Vw_n^2\leq \varepsilon C.$$
By local uniform ellipticity of $P$ and the Rellich theorem, $(w_n)_{n\in \mathbb{N}}$ converges to zero in $L^2_{loc}(\Omega)$. Since $V\in L_{loc}^\infty(\Omega)$,

$$\lim_{n\to\infty}\int_{K}Vw_n^2=0.$$
Finally, we obtain that

$$\limsup_{n\to\infty}\int_{\Omega}Vw_n^2\leq \varepsilon C.$$
Letting $\varepsilon\to0$, we conclude that

$$\lim_{n\to\infty}\int_{\Omega}Vw_n^2=0,$$
and the proof of the second part of the theorem is complete.

\end{proof}

\subsection{The case of approximate solutions and the role of subexponential growth}
In this subsection, we investigate what happens for a Hardy inequality with a weight obtained by the \textit{supersolution construction} of \cite{DFP}. We let $u_0$ and $u_1$ be positive supersolutions of $P$, and we recall the notation $W(u_0,u_1):=\frac{1}{4}\left|\nabla X_1^{-1}\left(\frac{u_0}{u_1}\right)\right|_A^2$. By Lemma 5.1 in \cite{DFP}, the following Hardy inequality takes place for $\lambda=1$:

\begin{equation}\label{Hardy_appr}
\lambda\int_\Omega W u^2\mathrm{d}\nu\leq q(u),\,\,\forall u\in C_0^\infty(\Omega), 
\end{equation}
where $q$ is the quadratic form of $P$, and define $\lambda_0$ and $\lambda_\infty$ to be respectively the best constant and the best constant at infinity in \eqref{Hardy_appr}. Define also the measures $\mu_i$ for $i=0,1$ by

$$\mu_i:=u_i^2W(u_0,u_1)\,\nu,$$
and the measure 

$$\mu:=u_0u_1W(u_0,u_1)\,\nu,$$
where we recall that $\nu$ is the underlying measure. Define the Agmon metric

\begin{equation}\label{metric_Ag2}
|\xi|_{Ag}^2:=W(u_0,u_1)\langle A^{-1}\xi,\xi\rangle,
\end{equation}
and let $\rho$ be the distance for the Agmon metric. By Remark \ref{complete_ag}, $\rho$ is given by formula \eqref{dist_ag} with $h=\frac{1}{2}X_1^{-1}\left(\frac{u_0}{u_1}\right)$. Define also $\sigma_i$ for $i=1,2$, being the exponential rate of volume growth of $\sigma_i$, as in Definition \ref{expo} with $\mu$ replaced by $\mu_i$. Let us first consider an example, which introduces the results that we want to present in this subsection:

\begin{Exa}\label{Exa_vol}
{\em

Let $\Omega$ be a bounded, smooth domain, with $P=-\Delta$, $\nu=\mathrm{dx}$, $W=\frac{1}{4\delta^2}=W(u_0,u_1)$ for $u_0=\delta$, $u_1=1$. Then $\mu_0=\frac{\mathrm{dx}}{4}$, $\mu_1=\frac{\mathrm{dx}}{4\delta^2}$ and $\mu=\frac{\mathrm{dx}}{4\delta}$. Also, the distance in the Agmon metric $\frac{\mathrm{d}x^2}{4\delta^2}$ is 

$$\rho(x,y)=\frac{1}{2}\left|\log\left(\frac{\delta(x)}{\delta(y)}\right)\right|.$$
Moreover, elementary computations show that:

\begin{enumerate}

 \item $\mu_0$ has finite volume, $\mu_1$ has infinite volume.

 \item $1=\lambda_\infty(\Omega,-\Delta,\frac{1}{4\delta^2})=\frac{\sigma_0^2}{4}=\frac{\sigma_1^2}{4}$.

 \item $\mu$ has \textit{linear} volume growth: 
 
 $$\mu(B_{Ag}(x_0,R)) \asymp R,\qquad \forall R\geq 1,$$
 where we recall that $B_{Ag}(x_0,R)$ is the geodesic ball of center $x_0$ and radius $R$ in the Agmon metric  $\frac{\mathrm{d}x^2}{4\delta^2}$.

\end{enumerate}

}

\end{Exa}
In the rest of this subsection, we will show that properties (1), (2) of Example \ref{Exa_vol} hold in more general situations. Let us begin to show an analogue of Theorem \ref{Brooks}:

\begin{Pro}\label{Brooks2}
Assume that $W(u_0,u_1)$ is positive, that

$$\lim_{x\to\infty}\frac{u_0(x)}{u_1(x)}=0,$$
and that for $i=0,1$,

$$\lim_{x\to\infty} \frac{1}{W(u_0,u_1)}\frac{Pu_i}{u_i}=0.$$
Then for $i=1,2$, the inequality

\begin{equation}\label{growth_est}
\lambda_\infty\leq \frac{\sigma_i^2}{4}
\end{equation}
holds.

\end{Pro}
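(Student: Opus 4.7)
The plan is to reduce the statement to a direct application of Theorem \ref{Brooks} by performing an $h$-transform using $u_i$ and then showing that the best constant at infinity is preserved (up to a negligible error) thanks to the hypothesis $V_i/W(u_0,u_1)\to 0$.

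First, fix $i\in\{0,1\}$ and apply the ground state representation (that is, the $h$-transform by $u_i$) to the Hardy inequality \eqref{Hardy_appr}. Writing $u=u_iv$ for $v\in C_0^\infty(\Omega)$, standard integration by parts gives
$$q(u_i v)=\int_\Omega u_i^2\,\langle A\nabla v,\nabla v\rangle\,\mathrm{d}\nu+\int_\Omega V_i\,u_i^2 v^2\,\mathrm{d}\nu,$$
where $V_i=Pu_i/u_i$. So the Hardy inequality \eqref{Hardy_appr} is equivalent, after the substitution $u=u_iv$, to
$$\lambda\int_\Omega W\,u_i^2 v^2\,\mathrm{d}\nu\;\leq\;\int_\Omega u_i^2\,\langle A\nabla v,\nabla v\rangle\,\mathrm{d}\nu+\int_\Omega V_i\,u_i^2 v^2\,\mathrm{d}\nu,\qquad\forall v\in C_0^\infty(\Omega),$$
with the same best constant at infinity $\lambda_\infty$.

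Second, I would use the assumption $V_i/W\to 0$ at infinity to remove the $V_i$ term. Given $\varepsilon>0$, there is a compact $K\subset\subset\Omega$ with $|V_i|\leq\varepsilon W$ on $\Omega\setminus K$. For $v\in C_0^\infty(\Omega\setminus K)$,
$$\left(\lambda-\varepsilon\right)\int_{\Omega\setminus K} W\,u_i^2 v^2\,\mathrm{d}\nu\;\leq\;\int_{\Omega\setminus K} u_i^2\,\langle A\nabla v,\nabla v\rangle\,\mathrm{d}\nu.$$
By the definition of the best constant at infinity, this gives $\lambda_\infty-\varepsilon\leq \tilde\lambda_\infty$, where $\tilde\lambda_\infty$ is the best constant at infinity for the Hardy inequality
$$\tilde\lambda\int_\Omega W\,v^2\,\mathrm{d}\mu_i\;\leq\;\int_\Omega \langle A\nabla v,\nabla v\rangle\,\mathrm{d}\mu_i,\qquad \forall v\in C_0^\infty(\Omega),$$
with $\mathrm{d}\mu_i=u_i^2\,\mathrm{d}\nu$. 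Letting $\varepsilon\to 0$ yields $\lambda_\infty\leq\tilde\lambda_\infty$.

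Third, I would apply Theorem \ref{Brooks} to this transformed Hardy inequality: the operator $\tilde P_i:=-\mathrm{div}_{u_i^2}(A\nabla\cdot)$ on $L^2(\Omega,u_i^2\,\mathrm{d}\nu)$ is of the form \eqref{div_P} with $\tilde c=0$, hence satisfies $\tilde P_i\mathbf{1}=0$, so the role of $\varphi$ in Theorem \ref{Brooks} is played simply by $\mathbf{1}$. The resulting measure in Theorem \ref{Brooks} is $\mathbf{1}^2\cdot W\,u_i^2\,\mathrm{d}\nu=\mathrm{d}\mu_i$, and the Agmon metric produced by Theorem \ref{Brooks} coincides with \eqref{metric_Ag2}. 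It remains to check completeness of this Agmon metric: by Remark \ref{complete_ag} (applied with $h=\tfrac12 X_1^{-1}(u_0/u_1)$) completeness is equivalent to $|h(x)|\to\infty$ at infinity, and this follows from the hypothesis $u_0(x)/u_1(x)\to 0$ and the fact that $X_1^{-1}(t)=1-\log t\to\infty$ as $t\to 0^+$. Theorem \ref{Brooks} then gives $\tilde\lambda_\infty\leq \sigma_i^2/4$, and combining with $\lambda_\infty\leq\tilde\lambda_\infty$ finishes the proof.

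The only non-cosmetic step is the second one: I need $V_i/W\to 0$ along with the specific structure of the best-constant-at-infinity definition (restricting to test functions supported outside a large compact set) in order to absorb the potential $V_i$ cleanly. The completeness check for the Agmon metric might look delicate but reduces immediately to Remark \ref{complete_ag} together with assumption (1).
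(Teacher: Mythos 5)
Your proof is correct and follows essentially the same route as the paper: $h$-transform by $u_i$, absorb the error term $V_i/W$ at infinity using the hypothesis, then apply Theorem \ref{Brooks} to the reduced operator $\tilde L_i$ (with completeness of the Agmon metric supplied by Remark \ref{complete_ag} and the hypothesis $u_0/u_1\to 0$); the paper phrases the absorption step via Persson's formula for the operator $L_i=\tilde L_i+V_i/W$, whereas you phrase it directly through the definition of the best constant at infinity, but these are the same argument. One small notational slip: in your second step you write $\mathrm d\mu_i=u_i^2\,\mathrm d\nu$, while the measure governing $\sigma_i$ (the $\mu_i$ of the paper) is $u_i^2 W\,\mathrm d\nu$, which is in fact what you correctly recover in step three as $\mathbf 1^2\cdot W\,u_i^2\,\mathrm d\nu$.
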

\begin{proof}
The completeness of the Agmon metric follow from Remark \ref{complete_ag} and the hypothesis that

$$\lim_{x\to\infty}\frac{u_0(x)}{u_1(x)}=0.$$
In the rest of the proof, we will denote $W(u_0,u_1)$ by $W$, and we let $V_i:=\frac{Pu_i}{u_i}$. As in section 5.1, we successively perform an \textit{h-transform} and a \textit{change of measure}. But this time, the \textit{h-transform} is performed with respect to an approximate solution of $P-W$, and not to a solution: we define two operators

$$L_i:=\frac{1}{W}u_i^{-1}Pu_i,$$
where $i\in\{0;1\}$, where $L_i$ is self-adjoint on $L^2(\Omega,u_i^2W\mathrm{d}\nu)$. By formulae \eqref{h-transform} and \eqref{change_measure}, we have

$$L_i=-\div_i\left(\frac{A}{W}\cdot\right)+\frac{V_i}{W},$$
where the divergence $\div_i$ is for the measure $\mu_i$.
Denote by $\tilde{L}_i$ the operator

$$\tilde{L}_i:=-\div_i\left(\frac{A}{W}\cdot\right).$$

Under the assumptions on $V_i$ made, an argument involving Persson's formula (see the proof of Proposition \ref{spectrum2}) shows that for $i=1,2$, the bottom of the essential spectrum of $L_i$ is equal to the bottom of the essential spectrum of $\tilde{L_i}$. But since $L_i$ is unitarily equivalent to $\frac{1}{W}P$, its bottom of the spectrum is $\lambda_\infty$, and therefore the bottom of the spectrum of $\tilde{L}_i$ is $\lambda_\infty$. We can now apply Theorem \ref{Brooks} to $\tilde{L}_i$ to get the result.

\end{proof}
We now turn to the reverse inequalities for $\sigma_0$, $\sigma_1$. This, as we shall see, requires conditions on the growth of the measure $\mu=u_0u_1 W(u_0,u_1)\,\nu$. These conditions generalize Property (3) (linear growth of $\mu$) of Example \ref{Exa_vol}, and are linked with the condition (4) appearing in Proposition \ref{spectrum2}. Denote by $\chi$ the push-forward of the metric $\mu$ by $X_1^{-1}\left(\frac{u_0}{u_1}\right)$:

$$\chi:=\left(X_1^{-1}\left(\frac{u_0}{u_1}\right)\right)_\star\mu.$$
The measure $\chi$ is the measure which appears in the condition (4) of Proposition \ref{spectrum2}. 

\begin{Def}

{\em

We will say that $\mu$ has {\em at most $\varepsilon-$exponential growth and decay} if there exists a constant $C>0$ such that for every $t>0$,

$$C^{-1}e^{-\varepsilon t}\leq \chi(t)\leq C\,e^{\varepsilon t}$$

}

\end{Def}

\begin{Rem}
{\em

The inequality $\chi(t)\leq C\,e^{\varepsilon t}$ is related -- but not equivalent -- to subexponential growth for $\chi$ (that is, is related to the condition (4) in Proposition \ref{spectrum2}). For example, $\mu=\frac{\mathrm{d}x}{4\delta}$ of Example \ref{Exa_vol}, has $\varepsilon-$exponential growth and decay, for every $\varepsilon>0$.
}
\end{Rem}
With this definition, we show the following result:

\begin{Pro}\label{reverse_ineq}

Assume that $\frac{u_0}{u_1}$ is bounded from above, that

$$\lim_{x\to\infty}\frac{u_0(x)}{u_1(x)}=0,$$
and that for some $0<\varepsilon<1$, $\mu$ has at most $\varepsilon-$exponential growth and decay. Then $\mu_0(\Omega)<\infty$, $\mu_1(\Omega)=\infty$, and for $i=0,1$, the reverse inequality

$$\frac{\sigma_i^2}{4}\leq 1$$
holds.

\end{Pro}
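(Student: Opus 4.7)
The strategy is to translate the geometry of the Agmon balls into one-dimensional estimates on the pushforward $\chi$, after which all three claims reduce to elementary integrations against the weights $e^{\pm(s-1)}$.

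\emph{Geometry of the Agmon distance.} Set $h:=\tfrac{1}{2}X_1^{-1}(u_0/u_1)=\tfrac{1}{2}(1-\log(u_0/u_1))$ and $s:=2h=X_1^{-1}(u_0/u_1)$, so that $W(u_0,u_1)=|\nabla h|_A^2$. By Remark~\ref{complete_ag} the Agmon distance equals $\rho(x,y)=|h(x)-h(y)|$, and the hypothesis $u_0/u_1\to 0$ at infinity together with the boundedness of $u_0/u_1$ forces $s$ to take values in an interval $[t_{\min},\infty)$ with $s\to\infty$ at infinity of $\Omega$; in particular the Agmon metric is complete, and, writing $t_0:=s(x_0)$,
\[
B_{Ag}(x_0,r)=\{x\in\Omega:\ |s(x)-t_0|\leq 2r\}.
\]

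\emph{Total masses.} Since $d\mu_0=(u_0/u_1)\,d\mu=e^{1-s}\,d\mu$ and $d\mu_1=(u_1/u_0)\,d\mu=e^{s-1}\,d\mu$, the change of variable formula \eqref{ch_variable} produces pushforwards $s_\star\mu_0=e^{1-s}\,d\chi(s)$ and $s_\star\mu_1=e^{s-1}\,d\chi(s)$ on $[t_{\min},\infty)$. The upper bound $\chi(s)\leq Ce^{\varepsilon s}$, combined with $\varepsilon<1$, makes $\int e^{1-s}\,d\chi$ converge, so $\mu_0(\Omega)<\infty$; the lower bound $\chi(s)\geq C^{-1}e^{-\varepsilon s}$ makes $\int e^{s-1}\,d\chi$ diverge, so $\mu_1(\Omega)=\infty$.

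\emph{Exponential rates $\sigma_i$.} Because $s\geq t_{\min}$, for $r$ large the component $\{s<t_0-2r\}$ of $\Omega\setminus B_{Ag}(x_0,r)$ is empty, so
\[
\mu_0(\Omega\setminus B_{Ag}(x_0,r))=\int_{t_0+2r}^{\infty}e^{1-s}\,d\chi(s),
\]
and similarly $\mu_1(B_{Ag}(x_0,r))$ reduces to the integral of $e^{s-1}\,d\chi(s)$ over $[\max(t_{\min},t_0-2r),\,t_0+2r]$. Inserting the two density bounds on $\chi$ yields estimates of order $e^{\pm 2(1+\varepsilon)r}$, so that $V_0(r)=1/\mu_0(\Omega\setminus B_{Ag}(x_0,r))\lesssim e^{2(1+\varepsilon)r}$ and $V_1(r)=\mu_1(B_{Ag}(x_0,r))\lesssim e^{2(1+\varepsilon)r}$; taking $\limsup\tfrac{1}{r}\log$ produces the required bounds on $\sigma_0$ and $\sigma_1$.

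\emph{The main obstacle.} The direct computation just outlined produces $\sigma_i\leq 2(1+\varepsilon)$, that is $\sigma_i^2/4\leq(1+\varepsilon)^2$, whereas the statement asks for the sharp constant $\sigma_i^2/4\leq 1$. Closing this gap is the crux of the proof. One natural route is to observe that in the relevant examples (e.g.\ Example~\ref{Exa_vol}) the $\varepsilon$-exponential growth and decay holds for \emph{every} $\varepsilon>0$, so that one may let $\varepsilon\to 0$ in the estimate and recover $\sigma_i\leq 2$; another is to combine the upper bound with the lower bound $\sigma_i^2/4\geq\lambda_\infty\geq 1$ provided by Proposition~\ref{Brooks2} together with the fact that \eqref{Hardy_appr} gives $\lambda_0\geq 1$, thereby pinning $\sigma_i$ to exactly $2$. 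Sharpening the bookkeeping of the $\varepsilon$-dependence so as to rigorously extract the sharp constant from the stated hypothesis is the essential technical step, and the rest of the argument consists only of the routine integrations sketched above.
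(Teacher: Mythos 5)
Your pushforward computation matches the paper's own approach: the paper introduces the inverse $\psi$ of $X_1^{-1}$ (with $\psi(t)\sim e^{-t}$), which is your $e^{1-s}$, and reduces everything to one-dimensional Stieltjes integrals against $d\chi$. The bound you obtain, $\sigma_i\leq 2(1+\varepsilon)$, coming from the boundary term $e^{2r}\chi(2r)\leq Ce^{(2+2\varepsilon)r}$ in the integration by parts, is the correct bookkeeping, and it is \emph{sharp} under the stated hypothesis: a model $d\chi(t)\asymp e^{\varepsilon t}\,dt$ satisfies both $\varepsilon$-exponential growth and decay bounds, yet gives $\int_1^{2r}e^t\,d\chi(t)\asymp e^{(2+2\varepsilon)r}$, hence $\sigma_1=2+2\varepsilon>2$ and $\sigma_1^2/4=(1+\varepsilon)^2>1$. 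So the gap you identify is genuine and not an artifact of careless estimates. The paper's own proof asserts the bounds $\int_1^{2r}e^t\,d\chi(t)\leq ce^{(2-\varepsilon)r}$ and $\int_{2r}^\infty e^{-t}\,d\chi(t)\geq c^{-1}e^{-(2-\varepsilon)r}$ and concludes $\sigma_i\leq 2$, but the first of these does not follow from $\chi(t)\leq Ce^{\varepsilon t}$ by the integration-by-parts argument indicated (which only yields $\leq Ce^{(2+2\varepsilon)r}$); the displayed estimates appear to contain an error, and the stated conclusion $\sigma_i^2/4\leq 1$ is strictly stronger than what the hypothesis ``for some $0<\varepsilon<1$'' delivers.

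Of your two proposed repairs, only the first closes the gap, and it requires upgrading the hypothesis: if $\mu$ has at most $\varepsilon$-exponential growth and decay for \emph{every} $\varepsilon>0$, then letting $\varepsilon\to0$ in your estimate gives $\sigma_i\leq 2$. That strengthened hypothesis is exactly what occurs in the intended applications (Example~\ref{Exa_vol} and the verification in Section~5.3, where $\chi$ grows linearly), and the remark following the definition already points out that the example satisfies the condition for every $\varepsilon>0$. Your second repair, however, points in the wrong direction: Proposition~\ref{Brooks2} furnishes the \emph{lower} bound $\sigma_i^2/4\geq\lambda_\infty\geq\lambda_0\geq 1$, which cannot be combined with your upper bound $\sigma_i\leq 2(1+\varepsilon)$ to force $\sigma_i\leq 2$; together they only locate $\sigma_i$ in the interval $[2,\,2(1+\varepsilon)]$.
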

As a direct consequence of Proposition \ref{Brooks2} and Proposition \ref{reverse_ineq}, we get the following corollary:
\begin{Cor}\label{equality_case}

Assume that $\frac{u_0}{u_1}$ is bounded from above, that assumptions of Proposition \ref{spectrum2} are satisfied, that there is $0<\varepsilon<1$ and some constant $C$ such that $\mu$ has at most $\varepsilon-$exponential growth and decay, and moreover that for $i=0,1$,

$$\lim_{x\to\infty} \frac{1}{W(u_0,u_1)}\frac{Pu_i}{u_i}=0.$$
Then $\mu_0(\Omega)<\infty$, $\mu_1(\Omega)=\infty$, the Agmon metric \eqref{metric_Ag2} is complete and for $i=0,1$,

$$\frac{\sigma_i^2}{4}=\lambda_\infty=1.$$

\end{Cor}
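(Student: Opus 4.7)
The plan is to assemble Corollary \ref{equality_case} by combining the two ingredients that precede it, namely Proposition \ref{Brooks2} (upper bound for $\lambda_\infty$ in terms of the exponential volume growth) and Proposition \ref{reverse_ineq} (lower bound for $\sigma_i^2/4$ and the finite/infinite mass dichotomy). The role of the hypotheses of Corollary \ref{equality_case} is to supply exactly what each of these two propositions needs, so that the chain $1 = \lambda_\infty \leq \sigma_i^2/4 \leq 1$ closes.

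First I would unpack what the assumptions of Proposition \ref{spectrum2} buy us. Condition (1) in Definition \ref{approx_sol}, namely $\lim_{x\to\infty} u_0/u_1 = 0$, combined with Remark \ref{complete_ag} applied to $h = \frac{1}{2}X_1^{-1}(u_0/u_1)$ and the fact that $|h(x)| \to \infty$ as $x\to\infty$, gives completeness of the Agmon metric \eqref{metric_Ag2}. Moreover, Proposition \ref{spectrum2} itself guarantees that the essential spectrum of $W(u_0,u_1)^{-1}P$ is $[1,\infty)$, which identifies $\lambda_\infty = 1$ in the Hardy inequality \eqref{Hardy_appr}.

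Next I would invoke Proposition \ref{Brooks2}. Its hypotheses are precisely those of our Corollary: positivity of $W(u_0,u_1)$ in a neighbourhood of infinity (inherited from the standing assumption of the subsection), the asymptotic $u_0/u_1 \to 0$, and the vanishing of $W^{-1}(Pu_i/u_i)$ for $i=0,1$. This yields
\begin{equation*}
1 \;=\; \lambda_\infty \;\leq\; \frac{\sigma_i^2}{4}, \qquad i=0,1.
\end{equation*}
For the reverse direction I would apply Proposition \ref{reverse_ineq}, whose hypotheses are also satisfied: $u_0/u_1$ is bounded above (assumed), $u_0/u_1 \to 0$ at infinity, and $\mu$ has at most $\varepsilon$-exponential growth and decay for some $\varepsilon \in (0,1)$. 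Proposition \ref{reverse_ineq} then simultaneously produces $\mu_0(\Omega) < \infty$, $\mu_1(\Omega) = \infty$, and the bound $\sigma_i^2/4 \leq 1$ for $i=0,1$. Combining the two inequalities gives $\sigma_i^2/4 = \lambda_\infty = 1$ and completes the proof.

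Since both halves are already established, no real obstacle remains; the only mildly non-obvious point is checking that the assumptions of Proposition \ref{spectrum2} contain what Proposition \ref{Brooks2} needs (in particular that condition (1) of Definition \ref{approx_sol} gives the completeness of the Agmon metric and the $u_0/u_1\to 0$ condition used by Proposition \ref{reverse_ineq}) and that the $\varepsilon$-exponential growth assumption is compatible with, but strictly stronger than, the subexponential growth condition (4) of Definition \ref{approx_sol} used in Proposition \ref{spectrum2}. These are verifications, not arguments, so the corollary should follow in essentially one short paragraph.
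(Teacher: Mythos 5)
Your assembly of Propositions \ref{Brooks2} and \ref{reverse_ineq}, together with $\lambda_\infty=1$ from Proposition \ref{spectrum2} and completeness of the Agmon metric from Remark \ref{complete_ag}, is correct and is exactly what the paper means by ``direct consequence.'' One side remark should be corrected though: the $\varepsilon$-exponential growth and decay condition on $\mu$ is \emph{not} strictly stronger than the subexponential growth condition (4) of Proposition \ref{spectrum2}; the paper's own Remark notes that $\chi(t)\leq Ce^{\varepsilon t}$ is ``related -- but not equivalent -- to subexponential growth'' (indeed it only gives $\limsup_{t\to\infty}\frac{1}{t}\log V(t)\leq\varepsilon$, not $=0$), which is precisely why the corollary assumes both conditions independently rather than deriving one from the other.
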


\begin{proof}
By normalization of $u_0$ and $u_1$, we will assume without loss of generality that

$$\frac{u_0}{u_1}\leq 1.$$
We will denote $W:=W(u_0,u_1)$ and $V_i:=\frac{Pu_i}{u_i}$. Let us also denote by $\psi$ the inverse function of $X_1^{-1}$. Since 

$$X_1^{-1}(t)\sim -\log(t) \hbox{ as }t\to0,$$
we have

$$\psi(t)\sim e^{-t}\hbox{ as }t\to0.$$
Now, 

$$\psi\left(X_1^{-1}\left(\frac{u_0}{u_1}\right)\right) \mu=u_0^2 W\,\nu=\mu_0,$$
and

$$\left(\frac{1}{\psi}\right)\left(X_1^{-1}\left(\frac{u_0}{u_1}\right)\right) \mu=u_1^2 W\,\nu=\mu_1.$$
Thus, using the change of variable formula \eqref{ch_variable}, we see that

$$\mu_0\left(a\leq X_1^{-1}\left(\frac{u_0}{u_1}\right)\leq b\right)=\int_a^b\psi(t)\mathrm{d}\chi(t),$$
and

$$\mu_1\left(a\leq X_1^{-1}\left(\frac{u_0}{u_1}\right)\leq b\right)=\int_a^b\frac{\mathrm{d}\chi(t)}{\psi(t)}.$$
By the hypothesis that

$$\lim_{x\to\infty}\frac{u_0}{u_1}=0,$$
we have

$$\mu_0(\Omega)=\mu_0\left(1\leq X_1^{-1}\left(\frac{u_0}{u_1}\right)<\infty\right)=\int_1^\infty\psi(t)\mathrm{d}\chi(t),$$
and

$$\mu_1(\Omega)=\mu_1\left(1\leq X_1^{-1}\left(\frac{u_0}{u_1}\right)<\infty\right)=\int_1^\infty\frac{\mathrm{d}\chi(t)}{\psi(t)}.$$
Since $\psi(t)\sim e^{-t}$ as $t\to\infty$, $\mu_0(\Omega)<\infty$ (resp. $\mu_1(\Omega)=\infty)$ is equivalent to $\int_1^\infty e^{-t} \mathrm{d}\chi(t)<\infty$ (resp. $\int_1^\infty e^{t} \mathrm{d}\chi(t)=\infty$). But by the integration by part formula, valid for Stieljes measures,

$$\int_1^\infty e^{-t} \mathrm{d}\chi(t)=\left[ e^{-t}\chi(t)\right]_1^\infty+\int_1^\infty e^{-t}\chi(t)\mathrm{d}t,$$
and given the hypothesis on $\chi$, 

$$\lim_{t\to\infty}e^{-t}\chi(t)=0,$$
and

$$\int_1^\infty e^{-t}\chi(t)\mathrm{d}t<\infty.$$
This proves that $\mu_0(\Omega)<\infty$. For $\mu_1(\Omega)$, we have again by integration by parts

$$\int_1^\infty e^{t} \mathrm{d}\chi(t)=\left[ e^{t}\chi(t)\right]_1^\infty+\int_1^\infty e^{t}\chi(t)\mathrm{d}t,$$
and given the hypothesis on $\chi$,

$$\int_1^\infty e^{t}\chi(t)\mathrm{d}t=\infty,$$
which yields $\mu_1(\Omega)=\infty.$ Now we turn to the estimates on $\sigma_0$ and $\sigma_1$. Since the Agmon metric is given by formula \eqref{dist_ag} with $h=X_1^{-1}\left(\frac{u_0}{u_1}\right)$, we see that in the definition of $\sigma_i$ we can replace the ball $B(x_0,r)$ in the Agmon metric by the set $\left\{2\leq X_1^{-1}\left(\frac{u_0}{u_1}\right)\leq 2r\right\}$.  Thus, using the change of variable formula \eqref{ch_variable}, and the fact that $\psi(t)\sim e^{-t}$ when $t\to\infty$, we see that

$$\sigma_0=\lim_{r\to\infty}\sup-\frac{1}{r}\log \int_{2r}^\infty e^{-t} \,\mathrm{d}\chi(t),$$
and

$$\sigma_1= \lim_{r\to\infty}\sup\frac{1}{r}\log \int_1^{2r} e^{t} \,\mathrm{d}\chi(t).$$
But using as above the integration by parts formula for Stieljes measures and the the hypothesis on $\chi$, we see that there is a constant $c>0$ such that for $r>0$ big enough,

$$\int_1^{2r}e^t\, \mathrm{d}\chi(t) \leq c\,e^{(2-\varepsilon)r}$$
and

$$\int_{2r}^\infty e^{-t} \,\mathrm{d}\chi(t)\geq c^{-1}e^{-(2-\varepsilon)r}.$$
This implies at once that $\sigma_i\leq 2$, which concludes the proof.

\end{proof}

\subsection{Volume growth for the improved Hardy inequalities on a convex set}
In this subsection, we show how the general theory developped in subsection 5.2 applies to the particular example of the improved Hardy inequalities on a bounded domain $\Omega$ of $\R^n$. Fix $i\geq0$, and define

$$P:=-\Delta-\mathcal{W}_{i-1}$$
($\mathcal{W}_{-1}=0$ by convention), where $\mathcal{W}_i$ is the weight

$$\mathcal{W}_i:=\frac{1}{4\delta^2}\left(\sum_{k=0}^{i} X_0^2\left(\frac{\delta}{D}\right)\cdots X_k^2\left(\frac{\delta}{D}\right)\right),$$
($\mathcal{W}_{-1}=0$ by convention). Recall also the definition of

$$\mathcal{J}_i:=\mathcal{W}_i-\mathcal{W}_{i-1}=\frac{1}{4\delta^2}X_0^2\left(\frac{\delta}{D}\right)\cdots X_i^2\left(\frac{\delta}{D}\right).$$
From Section 3, recall the definition of $U_{0,j}$ and $U_{1,j}$, and define 

$$u_0:=U_{0,i-1},$$

$$u_1:=U_{1,i-1}.$$
Define as in subsection 5.2, for $k=0,1$

$$\mu_{i,k}:=u_k^2\mathcal{J}_i\mathrm{d}x.$$
Define also the associated volume growth rate $\sigma_{i,k}$, for $k=0,1$. Then as a consequence of Corollary \ref{equality_case}, we have

\begin{Thm}

For every $i\geq0$, the measure $\mu_{i,0}$ (resp. $\mu_{i,1}$) has finite (resp. infinite) mass, and the convergence of volumes is exponential: for $k=0,1$,

$$\frac{\sigma_{i,k}^2}{4}=1.$$

\end{Thm}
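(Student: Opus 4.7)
The plan is to simply verify that the data $(P, u_0, u_1)$ with $P = -\Delta - \mathcal{W}_{i-1}$, $u_0 = U_{0,i-1}$, $u_1 = U_{1,i-1}$ satisfy every hypothesis of Corollary \ref{equality_case}, and then read off the conclusion. Most of the work has already been done inside the proof of Theorem \ref{improved_hardy}; the present statement is essentially a repackaging.

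First I would check that $u_0/u_1$ is bounded above and tends to $0$ at infinity. By construction $u_0/u_1 = X_i(\delta/D)$, which is bounded by $1$ (for $D$ chosen as in Corollary \ref{Hardy FT}) and tends to $0$ as $\delta \to 0$. Next, I would invoke the verification already carried out in the proof of Theorem \ref{improved_hardy}: the quantities $V_k = Pu_k/u_k$ satisfy $V_k = O(\delta^{-1})$ for $k=0,1$, while $W(u_0,u_1) = \mathcal{J}_i \asymp \delta^{-2} X_1^2(\delta/D) \cdots X_i^2(\delta/D)$. Consequently $V_k/\mathcal{J}_i = O(\delta^{1-\varepsilon})$ for every small $\varepsilon > 0$, which yields both the additional condition (4) of Corollary \ref{equality_case} (namely $\lim_{x\to\infty} V_k/W = 0$) and, together with the bounds on $X_1^{-1}(u_0/u_1) = Y_{i+1}$, the three conditions (1)--(3) of Proposition \ref{spectrum2}.

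The only remaining item is the requirement on the push-forward measure $\chi = (X_1^{-1}(u_0/u_1))_\star \mu$, namely that $\mu = u_0 u_1 \, W \, \mathrm{d}x$ have at most $\varepsilon$-exponential growth and decay for some $\varepsilon \in (0,1)$. Here the calculation in the proof of Theorem \ref{improved_hardy} goes much further than needed: it was shown there, via the co-area formula and the identity $\mathcal{J}_i U_{0,i}^2 = \frac{1}{4D}|\nabla Y_{i+1}|$, that
\[
\int_{\{a \leq Y_{i+1} \leq b\}} \mathcal{J}_i U_{0,i}^2 \, \mathrm{d}x \asymp (b-a),
\]
so $\chi$ has \emph{linear} growth. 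Since any linearly growing function satisfies $C^{-1} e^{-\varepsilon t} \leq \chi(t) \leq C e^{\varepsilon t}$ for every $\varepsilon > 0$, the hypothesis is trivially met (and there is no genuine obstacle here).

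With all hypotheses in place, Corollary \ref{equality_case} yields at once that $\mu_{i,0}(\Omega) < \infty$, $\mu_{i,1}(\Omega) = \infty$, and
\[
\frac{\sigma_{i,0}^2}{4} = \frac{\sigma_{i,1}^2}{4} = \lambda_\infty,
\]
where $\lambda_\infty$ is the best constant at infinity in the Hardy inequality for $\mathcal{J}_i^{-1}(-\Delta - \mathcal{W}_{i-1})$. By Theorem \ref{improved_hardy}, $\lambda_\infty = 1$, which gives the claimed equality. The main (and only) subtlety is keeping track of which pair $(u_0,u_1)$ is used in which subsection, so that the identification of $\mathcal{J}_i$ with $W(u_0,u_1)$ and of $\mu$ with $U_{0,i}^2 \mathcal{J}_i \, \mathrm{d}x$ is valid; once this bookkeeping is done the result is immediate.
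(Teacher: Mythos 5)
Your proof is correct and follows essentially the same approach as the paper's: identify the data $(P,u_0,u_1)$ with $P=-\Delta-\mathcal{W}_{i-1}$, $u_0=U_{0,i-1}$, $u_1=U_{1,i-1}$, verify the hypotheses of Corollary~\ref{equality_case} by reusing the estimates on $V_k/\mathcal{J}_i$ and the coarea computation showing linear growth of $\chi$ already carried out in the proof of Theorem~\ref{improved_hardy}, and apply the corollary. The only difference is cosmetic: you spell out explicitly that linear growth of $\chi$ gives $\varepsilon$-exponential growth and decay for every $\varepsilon>0$ (which the paper leaves implicit), and you invoke Theorem~\ref{improved_hardy} for $\lambda_\infty=1$ although Corollary~\ref{equality_case} already delivers that value as part of its conclusion.
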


\begin{proof}

Denote $\mathrm{d}\chi$ the push-forward measure of $u_0u_1\mathcal{J}_i\mathrm{d}x$ by $X_1^{-1}\left(\frac{u_0}{u_1}\right)$, then the computations done in the proof of Theorem \ref{improved_hardy} show that $\chi$ has linear growth. Also, for $k=0,1$, denote

$$V_k:=\frac{Pu_k}{u_k}.$$
Then by Proposition \ref{Fi-Ter}, in a neighborhood of $\partial \Omega$ we have

$$V_0=\frac{-\Delta\delta}{2\delta}\left(1-R_{i-1}\right),$$
and

$$V_1=\frac{-\Delta\delta}{2\delta}\left(1-R_{i-1}\right)-\frac{-\Delta\delta}{\delta}X_1\left(\frac{\delta}{D}\right)\cdots X_{i+1}\left(\frac{\delta}{D}\right).$$
It is immediate to check that for $k=0,1$,

$$\lim_{\delta\to 0}\frac{V_k}{W}=0.$$
Thus we can apply Corollary \ref{equality_case} to $\mu_{i,k}$, which gives the result.

\end{proof}

\section{Appendix}
Here, we give a proof of the fact that the series

$$\sum_{k\geq1}X_1(t)\cdots X_k(t)$$
converges for every $t\in[0,1)$. We thank A. Tertikas for having provided us with the proof. For every $\varphi$ defined on the unit ball $B_1$, we have the following Hardy inequality (as a simple consequence of Allegretto-Piepenbrink theory, or by direct integration by parts)

$$\int_{B_1} |\nabla u|^2 \; dx \geq \int_{B_1} \frac{ -\Delta \varphi }{ 
\varphi } \; u^2 \; dx, \;\;, u \in C^{\infty}_{0}(B_1)$$
We make the choice

$$\varphi = X^{-1/2}_1(|x|)\cdots X^{-1/2}_k(|x|),$$
and compute (see Lemma 6.3 in \cite{FT})

$$ \begin{array}{rcl}
\frac{ -\Delta \varphi }{ \varphi } &=& \frac{n-2}{2 |x|^2}  \sum_{i=1}^{k} 
X_1(|x|)\cdots X_i(|x|) + \frac{1}{4 |x|^2} \; \sum_{i=1} ^{k} \; X^2_1(|x|)\cdots X^2_i(|x|)\\\\
&\geq &\frac{n-2}{2 |x|^2} \; \sum_{i=1} ^{k} \; X_1(|x|)\cdots X_i(|x|) 
\end{array}$$
Applying it for $n\geq 3$, we conclude the the convergence of the required 
series for $t\in (0,1)$.

\begin{center}{\bf Acknowledgments} \end{center}
The author wishes to thank Y. Pinchover for many interesting discussions and inspiring questions, and A. Tertikas, G. Carron for useful comments. He also thank E. Zuazua for a question about multipolar Hardy inequalities, and the referee for his careful reading of the manuscript. The author acknowledges the support of the Israel Science
Foundation (grant No. 963/11) founded by the Israel Academy of
Sciences and Humanities, and the Technion.


\begin{thebibliography}{99}

\bibitem{Agmon} S.~Agmon, A representation theorem for solutions of Schr\"{o}dinger type equations on noncompact Riemannian manifolds M\'{e}thodes semi-classiques, Vol. 2 (Nantes, 1991). {\em Ast\'{e}risque} {\bf 210} (1992), 5, 13–-26.

\bibitem{Ag2} S.~Agmon, Bounds on exponential decay of eigenfunctions of Schr\"{o}dinger operators {\em in} Schr\"{o}dinger operators (Como, 1984), 1–38, {\em Lecture Notes in Math.}, {\bf 1159}, Springer, Berlin, 1985

\bibitem{Ag3} S.~Agmon, Lectures on exponential decay of solutions of second-order elliptic equations: bounds on eigenfunctions of N-body Schr\"{o}dinger operators {\em Mathematical Notes}, {\bf 29}, Princeton University Press, 1982

\bibitem{Ag4} S.~Agmon, On positivity and decay of solutions of second order elliptic equations on Riemannian manifolds, in ``Methods of
Functional Analysis and Theory of Elliptic Equations'', Liguori, Naples, 1983, pp.~19--52.

\bibitem{Anderson} M.~Anderson, The compactification of a minimal submanifold in Euclidean space by the Gauss map, unpublished manuscript, {\em available on the author's webpage}

\bibitem{BFT} G.~Barbatis, S.~Filippas and A.~Tertikas, Series expansion for $L^p$ Hardy inequalities
{\em Indiana Univ. Math. J.} {\bf 52} (2003), no. 1, 171–-190.

\bibitem{BFT2} G.~Barbatis, S.~Filippas and A.~Tertikas, A unified approach to improved $L^p$ Hardy inequalities with best constants, {\em Trans. Amer. Math. Soc.} {\bf 356} (2004), no. 6, 2169–-2196.


\bibitem{BM} H.~Brezis, M.~Marcus, Hardy's inequalities revisited, {\em Ann. Scuola Norm. Sup. Pisa Cl. Sci.} (4) {\bf 25} (1997), no. 1-2, 217–237.

\bibitem{BV} H.~Brezis, J.L.~V\'{a}zquez, Blow-up solutions of some nonlinear elliptic problems, {\em Rev. Mat. Univ. Complut. Madrid} {\bf 10} (1997), no. 2, 443–469.

\bibitem{Brooks1}R.~Brooks, A relation between growth and the spectrum of the Laplacian,  {\em Math. Z.} {\bf 178} (1981), no. 4, 501–508.

\bibitem{Brooks2}R.~Brooks, On the spectrum of noncompact manifolds with finite volume, {\em Math. Z.} {\bf 187} (1984), no. 3, 425–432.

\bibitem{Carron} G.~Carron, In\'{e}galit\'{e}s de Hardy sur les vari\'{e}t\'{e}s riemaniennes non-compactes, {\em J. Math. Pures Appl.}  {\bf 76}  (1997), 883--891.

\bibitem{CZ} C.~Cazacu, E.~Zuazua, Improved multipolar Hardy inequalities, {\em Progress in Nonlinear Differential Equations and Their Applications} {\bf 84}, M. Cicognani et al (eds.), pp. 39-57, Springer Science+Business Media New York 2013. Studies in Phase Space Analysis with Applications to PDEs


\bibitem{Baptiste} B.~Devyver, On the finiteness of the Morse index for Schr\"odinger operators,
{\em Manuscripta Math.} {\bf 139} (2012), no. 1-2, 249–271. 

\bibitem{DFP} B.~Devyver, M.~Fraas and Y.~Pinchover, Optimal Hardy weight for second-order elliptic operator: an answer to a problem of Agmon, {\em preprint}, arXiv 1208.2342.

\bibitem{DL} H.~Donnelly, P.~Li, Pure point spectrum and negative curvature for noncompact manifolds, {\em Duke Math. J.} {\bf 46} (1979), no. 3, 497–503.

\bibitem{FT} S.~Filippas and A.~Tertikas, Optimizing improved Hardy inequalities, {\em J. Funct. Anal.} {\bf 192} (2002), no. 1, 186–-233.

\bibitem{Gl} I.M.~Glazman, {\em Direct methods of qualitative spectral analysis of singular differential operators}, translated from the Russian by the IPST staff Israel Program for Scientific Translations, Jerusalem, 1965; Daniel Davey and Co., Inc., New York 1966 ix+234 pp.

\bibitem{Li-Wang1} P.~Li and J.~Wang, Complete manifolds with positive spectrum, {\em J. Diff. Geom.} {\bf 58} (2001), 501--534.

\bibitem{MMP} M.~Marcus, V.~J.~Mizel, and Y.~Pinchover, On the
best constant for Hardy's inequality in $\Real^n$, {\em Trans. Amer. Math. Soc.} {\bf 350} (1998), 3237--3255.

\bibitem{Persson} A.~Persson, Bounds on the discrete part of the spectrum of a semi-bounded Schr\"{o}dinger operator, {\em Math. Scand.} {\bf 8} (1960), 143--153.

\bibitem{Psaradakis} G.~Psaradakis, $L^1$ Hardy Inequalities with Weights, {\em J. Geom. Anal.} {\bf  23} (2013), no. 4, 1703–-1728.

\bibitem{P}  Y.~Pinchover, On principal eigenvalues for indefinite-weight elliptic problems, {\em in} Spectral and Scattering Theory, ed A.G. Ramm, Plenum, New York, (1998), 77-87.

\bibitem{P2} Y.~Pinchover, Criticality and ground states for second-order elliptic equations, {\em J. Differential Equations} {\bf 80} (1989), no. 2, 237–250. 

\bibitem{P3} Y.~Pinchover, Large scale properties of multiparameter oscillation problems, {\em Comm. Partial Differential Equations} {\bf 15} (1990), 647-673.

\bibitem{PRS} S.~Pigola, M.~Rigoli and A.~Setti, Vanishing and finiteness results in geometric analysis: a generalization of the Bochner technique, {\em Progress in Mathematics}, {\bf 266}, Birkhäuser Verlag, Basel, 2008. 

\bibitem{PT}  Y.~Pinchover, and K.~Tintarev, Existence of minimizers for Schr\"odinger operators under domain perturbations with application to Hardy's inequality, {\em Indiana Univ. Math. J.} {\bf 54} (2005), 1061--1074.

\bibitem{PT2}  Y.~Pinchover, and K.~Tintarev, A ground state alternative for singular Schr\"{o}dinger operators, {\em J. Funct. Anal.} {\bf 230} (2006), no. 1, 65–-77.

\bibitem{SZ} Y-B~Shen, X-H~Zhu, On stable complete minimal hypersurfaces in $\R^{n+1}$, {\em Amer. J. Math.} {\bf 120} (1998), no. 1, 103-–116.

\bibitem{Tertikas} A.~Tertikas, Critical Phenomena in Linear Elliptic Problems, {\em J. Funct. Anal.} {\bf 154} (1998), no.1, 42--66.



%
\end{thebibliography}
\end{document}